\newcommand{\beq}{\begin{equation}}
\newcommand{\eeq}{\end{equation}}
\newtheorem{lemma}{Lemma}
\newtheorem{theorem}{Theorem}
\def\remark{\addtocounter{remark}{1}\def\@currentlabel{\theremark}
\emph{Remark~\theremark}. } \makeatother
\newcounter{remark}
\newtheorem{corollary}{Corollary}[section]
\newtheorem{proposition}{Proposition}[section]
\def\smskip{\par\vskip 5 pt}
\def\QED{\hfill{\bf Q.E.D.}\smskip}
\title{Hybrid Block Successive Approximation for One-Sided Non-Convex Min-Max Problems:  Algorithms and Applications}
\author{{Songtao Lu, Ioannis Tsaknakis,  Mingyi Hong and Yongxin Chen}\thanks{$^{*}$ SL and IT contributed equally, and the names are listed alphabetically. SL is with IBM Research AI, IBM Thomas J. Waston Research Center, Yorktown Heights, NY 10598, USA. IT and MH are with the Department of Electrical and Computer Engineering, University of Minnesota, MN 55455, USA. The authors are supported by NSF grants CMMI-172775, CIF-1910385 and by an ARO grant 73202-CS; YC is with the School of Aerospace Engineering, Georgia Institute of Technology, Atlanta, GA 30332, USA. He is supported by NSF under grant 1901599. This work is done when SL was a post-doctoral fellow at the University of Minnesota.

This paper has supplementary downloadable material available at http://ieeexplore.ieee.org., provided by the authors. The material includes an extension of the proposed algorithm and the respective convergence analysis. This material is 5 pages in size.}
		\thanks{Part of this work is based on a paper accepted by IEEE ICASSP 2019 \cite{luts19}}\vspace{-0.8cm}}
\begin{document}

\maketitle

\begin{abstract}
The min-max problem, also known as the saddle point problem, is a class of optimization problems which minimizes and maximizes two subsets of variables simultaneously. This class of problems can be used to formulate a wide range of signal processing and communication (SPCOM) problems. Despite its popularity, most existing theory for this class has been mainly developed for problems with certain special {\it convex-concave} structure. Therefore, it cannot be used to guide the algorithm design for many interesting problems in SPCOM, where various kinds of non-convexity  arise.

In this work, we consider a block-wise {\it one-sided} non-convex min-max problem, in which the minimization problem consists of multiple blocks and is non-convex, while the maximization problem is (strongly) concave. We propose a class of simple algorithms named Hybrid Block Successive Approximation (HiBSA),  which alternatingly performs gradient descent-type  steps for the minimization blocks and gradient ascent-type steps for the maximization problem. A key element in the proposed algorithm is the use of certain regularization and penalty sequences,  which stabilize the algorithm and ensure convergence. We show that HiBSA converges to some properly defined first-order stationary solutions with quantifiable global rates.  To validate the efficiency of the proposed algorithms, we conduct numerical tests on a number of problems, including the robust learning problem, the non-convex min-utility maximization problems, and certain wireless jamming problem arising in interfering channels.
\end{abstract}

\section{Introduction}
\label{sec:intro}
Consider the min-max (a.k.a. saddle point) problem below:
\begin{align}\label{eq:problem}
\begin{split}
\min_{{x}}\max_{{y}}&\quad f(x_1,x_2,\cdots, x_K, y) + \sum_{i=1}^{K}h_i(x_i) - g(y)\\
\mbox{s.t.} & \quad x_i\in \mathcal{X}_i, \; y\in \mathcal{Y}, \; i=1,\cdots, K
\end{split}
\end{align}
where $f:\mathbb{R}^{{N K}+M}\to \mathbb{R}$ is a continuously differentiable function; $h_i: \mathbb{R}^{N}\to \mathbb{R}$ and $g:\mathbb{R}^{M}\to \mathbb{R}$ are some convex possibly non-smooth functions; $x:=[x_1;\cdots; x_K]\in\mathbb{R}^{{N\cdot K}}$ and $y\in\mathbb{R}^M$ are the block optimization variables; $\mathcal{X}_i$'s and $\mathcal{Y}$ are some convex {and compact} feasible sets. We call the problem {\it one-sided} non-convex problem because we assume that $f(x,y)$ is non-convex w.r.t. $x$, and (strongly) concave w.r.t. $y$. For notational simplicity, we will use $\ell(x_1,x_2,\cdots, x_K, y)$ to denote the overall objective function for problem \eqref{eq:problem}.

Problem \eqref{eq:problem} is quite generic, and it arises in a wide range of signal processing and  communication (SPCOM) applications. We list of few of these applications below.

\vspace{-0.2cm}
\subsection{Motivating Examples in SPCOM}
\label{sec:format}
\vspace{-0.1cm}
\noindent {\bf Distributed non-convex optimization}: Consider a network of $K$ agents defined by a connected graph $\mathcal{G}\:=\{\mathcal{V},\mathcal{E}\}$  with $|\mathcal{V}|=K$, where each agent $i$ can communicate with its neighbors. A generic problem formulation that captures many distributed machine learning and signal processing problems can be formulated as follows \cite{mateos2010distributed,liao2015semi,Hajinezhad17inexact,Giannakis15}:
$$\min_{\{x_i\}} \sum_{i=1}^{K} f_{i}(x_i) + h_{i}(x_i),\; \|x_i-x_j\|\le c_{i,j}, \; (i,j)\; \mbox{are neighbors}$$
where each $f_{i} : \mathbb{R}^N \rightarrow \mathbb{R}$ is a non-convex, smooth function, $h_{i}: \mathbb{R}^N\rightarrow \mathbb{R}$ is a convex non-smooth regularizer, and $x_i\in\mathbb{R}^N$ is agent $i$'s local variable. Each agent $i$ has access only to $f_{i}$ and $h_{i}$. The non-negative constants $c_{i,j}$'s are predefined, and they can be selected to represent different levels of agreement among the agents \cite{Koppel_proximity}. 
Despite the fact that there have been a number of recent works on distributed non-convex optimization \cite{di2016next,tian18,Daneshmand18,Jiang2017,Lian2017,luh19}, the above problem formulation cannot be covered by any of these due to two main reasons: {\it i)} the nonsmooth regularizers $h_i$'s can be different across the nodes, invalidating the assumptions made in, e.g., \cite{di2016next} (which requires uniform regularizer across the nodes), and  {\it ii)} the partial consensus constraints are considered rather than the exact consensus where $c_{i,j}\equiv 0,\; \forall~i,j$. 

The above problem can be equivalently expressed  as:
\begin{align}\label{eq:distributed}
\begin{split}
\min_{x, \tilde{x}} & \quad  f({x})+h({x}) := \sum_{i=1}^{K} \left( f_{i}(x_{i}) + h_{i}(x_{i})\right)\; \\
\textrm{ s.t.}& \quad ({A} \otimes I_N){x} - \tilde{x} = 0, \; \tilde{x}\in \mathcal{Z} \subseteq \mathbb{R}^{{|\mathcal{E}|\cdot N}}
\end{split}
\end{align}
where ${x}:=[ x_{1}; \ldots; x_{K}]\in\mathbb{R}^{KN}$; ${A} \in \mathbb{R}^{|\mathcal{E}| \times K}$ is the incidence matrix, i.e., assuming that the edge $e$ is incident on vertices $i$ and $j$, with $i>j$ we have that $A_{ei}= 1, A_{ej}=-1$ and $A_{e\cdot}=0$ for all other vertices; $\otimes$ denotes the Kronecker product. $\tilde{x}\in {\mathbb{R}^{|\mathcal{E}|  N}}$ is the auxiliary variable representing the difference between two neighboring local variables;
the feasible set {$\mathcal{Z}$} represents the bounds on the size of the {differences.}
Using duality theory we can introduce the Lagrangian multiplier vector $y$ and rewrite the above problem as:
\begin{equation}\label{eq:reform}
\min_{{x}\in\mathbb{R}^{KN}, \tilde{x}\in \mathcal{Z}}\max_{{y}\in\mathbb{R}^{{|\mathcal{E}|\cdot N}}} f({x})+h({x})+\left\langle {y},{(A \otimes I_N)}{x} -\tilde{x}\right\rangle.
\end{equation}
See Sec. \ref{sub:opt} for detailed discussion on this reformulation and its relationship with \eqref{eq:distributed}. Clearly \eqref{eq:reform} is in the form of \eqref{eq:problem}.

\noindent {\bf Robust learning over multiple domains}: In \cite{qian2018robust} the authors introduce a robust learning framework, in which the training sets from $M$ different domains are used to train a machine learning model. Let $\mathcal{S}_{m}\:=\lbrace({s}_{i}^{m},t_{i}^{m}) \rbrace,\,1\leq m\leq M$ be the individual training sets with ${s}_{i}^{m} \in \mathbb{R}^{N}$, {${t}_{i}^{m} \in \mathbb{R}$}; ${x}$ be the parameter of the model we intent to learn, ${\ell}(\cdot)$ a non-negative loss function, and {$f_{m}({x}) = \frac{1}{| \mathcal{S}_{m}|} \sum_{i=1}^{|\mathcal{S}_{m}|}  {\ell}({s}_{i}^{m},t_{i}^{m},{x})$} is the (possibly) non-convex empirical risk in the $m$-th domain. The following problem formulates the task of finding the parameter ${x}$ that minimizes the empirical risk, while taking into account the worst possible distribution over the $M$ different domains:
\begin{equation} \label{eq:robust}
\min\limits_{x} \max\limits_{y\in\Delta} \; y^{T}F(x)-\dfrac{\lambda}{2}D(y||q)
\end{equation}
where $F({x}):=[f_{1}({x});\ldots;f_{M}({x})]\in\mathbb{R}^{M\times 1}$; ${y}$ describes the adversarial distribution over the different domains; $\Delta:=\{{y}\in\mathbb{R}^{M} \mid 0\leq y_{i}\leq 1,\,i=1,\ldots,M,\,\sum_{i=1}^{M} y_{i}=1\}$ is the standard simplex; $D(\cdot)$  is some distance between probability distributions,  $q$ is some prior probability distribution, and $\lambda>0$ is some  constant. The last term in the objective function represents some regularizer that imposes structures on the adversarial distribution.

\noindent {\bf Power control and transceiver design problem}: Consider a problem in wireless transceiver design, where  $K$ transmitter-receiver pairs  transmit over $N$ channels to maximize their {\it minimum} rates. User $k$ transmits messages with power $x_k:=[x^1_k; \cdots; x^N_k]$, and its rate is given by (assuming  Gaussian signaling):
$$R_k(x_1, \ldots, x_K) = \sum_{n=1}^{N}\log\Big(1 + \dfrac{a^n_{kk} x^n_k}{\sigma^2 + \sum_{\ell=1, \ell\neq k}^{K} a^n_{\ell k} x^n_\ell}\Big),$$
which is a non-convex function on $x:=[x_1;\cdots; x_K]$. Here  $a^n_{\ell k}$'s denote the channel gain between the pair $(\ell,k)$ on the $n$th channel, and $\sigma^2$  is the noise power.  Let $\bar{x}$ denote the power budget for each user, then the classical max-min fair power control problem is: $\max\limits_{x \in \mathcal{X}} \min\limits_{k}  R_{k}(x) $, where  $\mathcal{X}:=\lbrace x\mid 0 \leq \sum_n x^n_k \leq \bar{x} , \forall~k\rbrace$ denotes the feasible power allocations. The above max-min rate problem can be equivalently formulated as
\eqref{eq:problem} (see Sec. \ref{sub:opt} for details) \footnote{A minus sign is added to equivalently transform to the min-max problem.}:
\begin{equation}\label{eq:max-min-fair}
\min\limits_{x\in\mathcal{X}} \max\limits_{y \in \Delta}\quad  \sum\limits_{k=1}^{K} -R_{k}(x_1,\cdots, x_K) \times y_k,
\end{equation}
where the set $\Delta\subseteq \mathbb{R}^K$ is again the standard simplex.

{A closely related problem is the coordinated beamforming design in a (multiple input single output) MISO interference channel. In this case the target is to find the optimal beamforming vector for each user in order to maximize some system utility function under the total power and outage probability constraints \cite{cobf}. When the min-rate utility is used, this problem can be formulated  as
\begin{align}
\max\limits_{x_i \in \mathbb{C}^{N_t}, \forall i} \min\limits_{i}  R_{i}(\{x_{k}\}) \quad  \mbox{\rm s.t.} \; \|x_i\|^{2} \leq \bar{p}, \forall i
\end{align}
where $x_i$ is the transmit beamformer, $N_t$ is the number of antennas. Also,  $R_{i}(\{x_{k}\}) = \log_2(1+\xi_i(\{x_{k}\}_{k \neq i}) x_{i}^{H}Q_{ii}x_{i})$, where $\xi_i$ incorporates the outage constraints {and the cross-link interference, while} $Q_{ii}$ denotes the covariance matrix of the channel between the $i$th transmitter-receiver pair.
}

	 For other setups, similar min-max problems can be formulated, some of which can be solved optimally (e.g., power control \cite{Foschini93,zander92a, zander92b}, transmitter density allocation \cite{luswa19}, or certain MISO beamforing \cite{bengtsson01,Wiesel06}). But for general multi-channel and/or MIMO interference channel, the corresponding problem is NP-hard \cite{liu13maxminTSP}. Many heuristic algorithms are available for these problems \cite{liu13maxminTSP,Razaviyayn12maxmin,sun14downlink,luwa18}, but they are all designed for special problems, and often require repeatedly invoking computationally expensive general purpose solvers.
	  For computational tractability, a common approach is to perform the following approximation of the min-rate utility \cite{Li15outtage}:
	  	\begin{align}\label{eq:min:max:approx}
	  	\min_{i} r_i \approx -1/\gamma \log_2 \sum_{i=1}^{N} 2^{-\gamma r_i}.
	  	\end{align}
	 However such an approximation procedure can introduce significant rate losses, as will be seen in Sec. \ref{sec:majhead}.

\noindent {\bf Power control in the presence of a jammer:} Consider an extension of the power control problem, where a jammer participates in a $K$-user $N$-channel interference channel transmission \cite{gohary2009generalized}. Differently from a regular user, the jammer's objective is to reduce the sum-rate of other users by properly transmitting noises.
Let $y^n$ denote the jammer's transmission on the $n$th channel, then one can formulate the following sum-rate maximization-minimization problem:
\begin{align}\label{eq:jammer}
\min\limits_{x \in \mathcal{X}} \max\limits_{{y} \in \mathcal{Y}}\hspace{-0.1cm}
\sum\limits_{(k,n)}\hspace{-0.1cm}  -\log \left( 1 + \dfrac{a^n_{kk} x_{k}^{n}}{\sigma^{2} + \sum_{j=1,j \neq k}^{{K}} a_{jk}^{n} x_{j}^{n} + a_{0k}^{n}y^{n}}\right),
\end{align}
where $x_{k}$ and $y$ are  the power allocation of user $k$ and the jammer, respectively; the set $\mathcal{X}:=\mathcal{X}_{1}\times\cdots\times\mathcal{X}_{K}$, where $\mathcal{X}_{k}$ are defined similarly as before.

\subsection{Related Work}
Motivated by these applications, it is of interest to develop efficient algorithms for solving these problems with theoretical convergence guarantees. In the optimization community, there has been a long history of studying min-max optimization problems.
When the problem is convex in $x$ and concave in $y$, algorithms have been developed which can solve the convex-concave saddle problem optimally; see \cite{Nedi09,hizh18,chen2014optimal,daskalakis2018limit,daskalakis2017training} and the references therein. However, when the problem is non-convex, the convergence behavior of such alternating type algorithms has not been well understood.

Although there are many recent works on the non-convex minimization problems \cite{bertsekas99}, only a few of them have been focused on the non-convex min-max problems.
An optimistic mirror descent algorithm is proposed in \cite{mertikopoulos2018omd},  and its convergence to a {saddle} point is established under certain strong coherence assumptions.
In \cite{qian2018robust}, algorithms for robust optimization are proposed, where the $x$ problem is unconstrained, and $y$ linearly couples with a non-convex function of $x$ [cf. \eqref{eq:robust}].
In \cite{qihang18}, a proximally guided stochastic mirror descent method (PG-SMD) is proposed, which provably converges to an approximate stationary point of the outer minimization problem.
An oracle based non-convex stochastic gradient descent for generative adversarial networks (GAN) is proposed in \cite{same18}, where the algorithms solve the maximization subproblem up to some small error. Moreover, in\cite{sanjabi2018solving, nouiehed2019solving} a multi-step GDA scheme is introduced, where the maximization problem is approximately solved using a number of gradient ascent steps. In \cite{gan_minmax} the convergence of a primal-dual algorithm to a first-order stationary point is established for a class of GAN problems  formulated as a special min-max optimization problem where the coupling term is linear w.r.t the discriminator.
More recently, in \cite{lin2019gradient} it has been shown that GDA can converge to a stationary point of the outer minimization problem in the (strongly) concave case, under certain conditions. Under the same optimality criterion and assuming that the inner problem is concave, \cite{thekumparampil2019efficient}  proves convergence using a proximal dual implicit accelerated gradient method.

It is worth noting that, in the works discussed above, different optimality criteria are often utilized. Since these conditions are not equivalent to each other, one cannot directly compare the convergence guarantees of algorithms that reach these criteria. On the other hand, these optimality criteria often share some interesting implicit connections. For example, it can be shown that, no matter if the inner maximization problem is strongly concave or concave, as long as a point $(x^{\ast},y^{\ast})$ is an (exact or approximate) stationary point defined in this current work [see \eqref{eq:stationarity}], then it is also an (exact or approximate, respectively) stationary point in the sense defined in \cite{qihang18}; see \cite{lin2019gradient} for detailed discussions.
In Table \ref{tb:rate} we provide a summary of some algorithms discussed above, including the complexity and the respective optimality criterion.

\begin{table*}
{\footnotesize
\begin{center}
\begin{threeparttable}
\begin{tabular}{|c|c|c|c|c|c|c|}
\hline
\textbf{Algorithm} & \textbf{ Optimality Criterion} & \textbf{Det./ St.} & \multicolumn{2}{c|}{\textbf{Assumptions}} &  \textbf{ Gradient Complexity} \tnote{1} \\
\hline
\multirow{2}{*}{ \makecell{ \textit{Multi-Step GDA}  \cite{sanjabi2018solving,nouiehed2019solving}} }  &  \multirow{2}{*}{ \makecell { \\  \tnote{2}~~ 1st order Nash equilibrium}} &  \multirow{2}{*}{ \makecell { \\  Det.}}  & \multicolumn{2}{c|}{\makecell{ $f$ NC in $x$/Polyak-Lojasiewicz in $y$ \\ $h(x)=0, \; g(y)=0$}} &   $\mathcal{O}(\epsilon^{-2} \log(\frac{1}{\epsilon}))$ \\ \cline{4-6}
& & &   \multicolumn{2}{c|}{\makecell{ $f$ NC in $x$/concave in $y$  \\ $h(x)=0, \; g(y)=0$}} &   $\mathcal{O}(\epsilon^{-3.5} \log(\frac{1}{\epsilon})))$ \\ \hline
\multirow{2}{*}{ \makecell{ \\ \textit{Robust optimization} \cite{qian2018robust} } }  &  \multirow{2}{*}{ \makecell { \tnote{3}~~1st order SP for min. problem \\ Optimality gap for max. problem}} &  \multirow{2}{*}{ \makecell { \\   St.}}  & \multicolumn{2}{c|}{\makecell{ $f$ NC in $x$/linear in $y$  \\ $h(x) =0$, \; $g$ convex, {smooth}}} &   $\mathcal{O}(\epsilon^{-6})$\\ \cline{4-6}
& & &  \multicolumn{2}{c|}{\makecell{ $f$ NC in $x$/linear in $y$  \\ $h(x) =0$, \; $g$ str. convex, {smooth}}} &   $\widetilde{\mathcal{O}}(\epsilon^{-2} + \epsilon^{-4})$ \\ \hline

\multirow{4}{*}{\makecell{ \\ \\ \\ \textit{PG-SMD/ PGSVRG}\cite{qihang18}} }   & \multirow{4}{*}{ \makecell { \\ \\ \\  \tnote{4}~~1st order SP for min. problem}} & \multirow{4}{*}{ \makecell { \\  \\ \\  St.}} &  
\multirow{2}{*}{\makecell{ {$f = \frac{1}{n}\sum_{i=1}^{n}y^{T}c_i(x)$, $c_i(x)$ NC} \\ \\ $f$ NC in $x$, $f$ concave in $y$ }} & \makecell{\\  {$g$ str. convex} }  & \makecell{  $\widetilde{\mathcal{O}}(n\epsilon^{-2} + \epsilon^{-4})$}  \; \\ \cline{4-6}
& & & & \makecell{\\ {$g$ convex}} & \makecell{$\widetilde{\mathcal{O}}(\epsilon^{-6} )$} \\ \cline{4-6}
& & & \multirow{2}{*}{\makecell{$f = \frac{1}{n} \sum_{i=1}^{n} f_{i}$ \\[1ex]  $f$  NC in $x$ \\[0.11ex]{$f$ concave in $y$} }} & \makecell{\\  {$g$ str. convex}}  &  \makecell{$\widetilde{\mathcal{O}}(n\epsilon^{-2})$ } \\ \cline{5-6}
& & & & {\makecell{\\ {$g$ convex}}} &\makecell{ $\widetilde{\mathcal{O}}(n\epsilon^{-2} + \epsilon^{-6} )$}
\\ \hline

\multirow{3}{*}{ \makecell { \\ \\ \textit{HiBSA} (our work)}}  & \multirow{2}{*}{ \makecell { \\ \\   \tnote{5}~~ 1st order SP (def. \eqref{eq:stationarity}) }} & \multirow{3}{*}{ \makecell { \\ Det.}} & \multicolumn{2}{c|}{\makecell{ \\$f$ NC in $x$/strongly concave in $y$ }} &  $\mathcal{O}(\epsilon^{-2})$ \\ \cline{4-6}
& & & \multicolumn{2}{c|}{\makecell{ \\ $f$ NC in $x$/ linear in $y$}}  &  $\widetilde{\mathcal{O}}(\epsilon^{-4})$ \\ \cline{4-6}
& & & \multicolumn{2}{c|}{\makecell{ \\  \tnote{6}~~$f$ NC in $x$/concave in  $y$}}  &    $\widetilde{\mathcal{O}}(\epsilon^{-4} \log(\frac{1}{\epsilon}))$  \\ \hline
\end{tabular}
\vspace{0.2cm}
{\small} $^1$ Gradient complexity refers to the total  number of gradient evaluations required to reach an $\epsilon$-stationary solution, where the definition of stationarity can vary for different works.; 
	 \; { $^2${ The approximate stationarity condition for $(\tilde{x},\tilde{y})$ is $- \min_{x} \langle \nabla_{x} f(\tilde{x},\tilde{y}),x-\tilde{x} \rangle \leq \epsilon, \forall x \in \mathcal{X} : \|x - \tilde{x} \| \leq 1$ and $\max_{y}\langle \nabla_{y} f(\tilde{x},\tilde{y}),y-\tilde{y} \rangle \leq \epsilon, \forall y \in \mathcal{Y} : \|y - \tilde{y} \| \leq 1$} ;  \; $^3$For the minimization problem, convergence is established using the stationarity gap, i.e {$\| \nabla_{x} f(x,y)\| \leq \epsilon$}, while for the maximization problem the optimality condition { $\max_{y^{\prime} \in \mathcal{Y}}f(x,y^{\prime}) - f(x,y) \leq \epsilon$} is used;  \;  $^4$The optimality measure is the norm of the gradient of the Moreau envelope of the minimization problem, i.e { $\| \nabla \phi_{\gamma}(x) \| \leq \epsilon$} with $\phi_{\gamma}(x) = \min_{z} \{ \phi(z) + (1/2\gamma) \| z-x \|^{2} \}$ and $\phi(z) = \max_{y} \{ f(z,y) - g(y)\}$.}; \; { $^5$ A point $(\tilde{x},\tilde{y})$ such that $\| \mathcal{G}^{\beta}_{\rho}(\tilde{x},\tilde{y}) \| \leq \epsilon$; \; $^6$  This complexity is obtained for the algorithm given in the  supplementary document of the current work  
\cite{ieee_minmax_supp}. }
\end{threeparttable}
\end{center}
\caption{\small{Summary of algorithms for $\min\limits_{x}\max\limits_{y}\; f(x, y) + h(x) - g(y)$, where { $f$ is a smooth function, while} $h$ and $g$ are considered convex { non-smooth} functions unless otherwise stated. Note that in the 3rd column we characterize the type of the { algorithm}, i.e deterministic (Det.) or stochastic (St.). Moreover, we use the abbreviations NC for non-convex, SP for stationary point, str. for strongly, { min. for minimization and max. for maximization.} }}
\label{tb:rate}
\vspace{-0.4cm}}
\end{table*}

\vspace{-0.2cm}
\subsection{Contribution of this work}
In this work, we design effective algorithms for the min-max problem by adopting the popular block alternating minimization/maximization strategy.  The studied problems allow non-convexity and non-smoothness in the objective, as well as non-linear coupling between variables. The algorithm proposed in this work is named the {\it Hybrid Block Successive Approximation} (HiBSA) algorithm, because it updates the variables block by block,  where each block is optimized using a strategy similar to the idea of successive {\it convex} approximation (SCA) \cite{meisam15BSUMsurvey} -- except that to update the $y$ block, a {\it concave} approximation is used (hence the name ``hybrid''). Despite the fact that such a block-wise alternating optimization strategy is simple and easy to implement [for example it has been used in the popular block successive upper bound minimization (BSUM) framework \cite{razaviyayn14thesis,meisam15BSUMsurvey} for {\it minimization-only} problem], it turns out that having the maximization subproblem invalidates all the previous analysis for {\it minimization-only}  algorithms. 

The main contributions of this paper are listed as follows. First, a number of applications in SPCOM have been formulated in the framework of non-convex, one-sided min-max problem \eqref{eq:problem}. Second, based on different assumptions on how $x$ and $y$ variables are coupled, as well as whether the $y$ problem is strongly {concave or merely concave},  three different types of min-max problems are studied. For each of the problem class, a simple single loop  algorithm is presented, together with its convergence guarantees\footnote{{{In addition to the algorithm presented in the main text, we also provide an alternative double-loop algorithm for the case where the $y$ problem is concave, in the {supplementary document of this article
\cite{ieee_minmax_supp}}.}}}. The major  benefits of using the block successive approximation strategy are twofold: 1) each subproblem can be solved effectively, and 2) it is relatively easy to integrate many existing algorithms that are designed for only  solving minimization problems (such as those based on the BSUM framework \cite{razaviyayn14thesis,meisam15BSUMsurvey}). Finally, extensive numerical experiments are conducted for selected applications from SPCOM to validate the proposed algorithms.

Overall,  to the best of our knowledge this is the first time that the convergence  of the alternating block successive approximation type algorithm is rigorously analyzed for the (one-sided) non-convex min-max problem \eqref{eq:problem}.

{\textbf{Notation} The notation $\|\cdot\|$ denotes the vector $2$-norm $\| \cdot \|_{2}$; $\otimes$ denotes the Kronecker product; $I_{N}$ is the $N \times N $ identity matrix; $\langle \cdot, \cdot \rangle$ is the Euclidean inner product; $\mathcal{I}_{\mathcal{X}}(x)$ denotes the indicator function on set $\mathcal{X}$; in case the subscript is missing the set is implied by the context; $[K]:=\{1,\cdots, K\}$. Finaly, the notation $\tilde{\mathcal{O}}$ denotes big O notation $\mathcal{O}$ up to some logarithmic factor.
}

\vspace{-0.2cm}
\section{The Proposed Algorithms and Analysis}
\label{sec:algorithm}
\vspace{-0.1cm}
In this section, we present our main algorithm.
Towards this end, we will first make a number of blanket assumptions on problem \eqref{eq:problem}, and then present the HiBSA algorithm in its generic form. We will then discuss in detail about various algorithmic choices, as well as major challenges in the analysis.

Let the superscript $r$ denote iteration number. For notational simplicity, we will define the following:
\begin{subequations}
\begin{align}\label{eq:def:w}
w^{r+1}_i &:= [x^{r+1}_1; \cdots, x^{r+1}_{i-1}, x^r_i, \cdots x^r_K]\in\mathbb{R}^{NK},\\
w^{r+1}_{-i} &:= [x^{r+1}_1; \cdots, x^{r+1}_{i-1}, x^r_{i+1}, \cdots x^r_K]\in\mathbb{R}^{N(K-1)},\\
x_{-i} &:= [x_1; \cdots, x_{i-1}, x_{i+1}, \cdots x_K]\in\mathbb{R}^{N(K-1)}.
\end{align}
\end{subequations}
Throughout the paper, we will assume that problem \eqref{eq:problem} satisfies the following blanket assumption.

\noindent{\underline{\sf{Assumption A}}.} The following conditions hold for \eqref{eq:problem}:
\begin{itemize}
\item[A.1] $f:\mathbb{R}^{KN+M}\to \mathbb{R}$ is continuously differentiable; The feasible sets $\mathcal{X}  = \mathcal{X}_{1} \times \ldots \times \mathcal{X}_{K}$ and $\mathcal{Y} \subseteq \mathbb{R}^{M}$ are convex and compact. Further $\ell(x,y)$ is lower bounded, that is, $\ell(x,y)\ge \underline{\ell},\; \forall~x\in \mathcal{X}, y\in \mathcal{Y}$.

\item[A.2] $h_i(\cdot)$'s and $g(\cdot)$  are convex and non-smooth functions;

\item[A.3]$f$ has Lipschitz continuous gradient with respect to (w.r.t.) $x_{i}$ for every $i$ with constant $L_{x_{i}}$, that is:
\vspace{-0.10cm}
\begin{align}
\hspace{-0.9cm}\| \nabla_{x_{i}} f(\bar{z}) - \nabla_{x_{i}}f(z) \| \leq L_{x_{i}} \| \bar{z} - z \|, \forall \bar{z}, z \in  \mathcal{X} \times \mathcal{Y};
\end{align}
Furthermore, $f$ has Lipschitz continuous gradient w.r.t. $y$ with constant $L_{y}$, that is:
\vspace{-0.10cm}
\begin{subequations}
\begin{align}
\hspace{-0.9cm}{\| \nabla_{y} f(\bar{z}) - \nabla_{y}f(z) \| \leq L_{y} \| \bar{z} - z \|, \forall \bar{z}, z \in  \mathcal{X} \times \mathcal{Y}} \label{eq:Ly}.
\end{align}	
\end{subequations}
\vspace{-0.6cm}
\end{itemize}

{Next we describe the proposed HiBSA algorithm.
		\begin{center}
			\fbox{\small
				\vspace{-1cm}
				\begin{minipage}{0.95\linewidth}
					\textsf{\bf Hybrid Block Successive Approximation (HiBSA) Algorithm}\\
					At each iteration $r=1, 2, 3,\cdots$ \\
					\; {\bf [S1].} For $i=1,\cdots, K$, perform the following update:
					\begin{align}\label{eq:x-update}
			\hspace{-0.3cm}	x_i^{r+1}\hspace{-0.1cm}	 =&\arg\hspace{-0.1cm}	\min_{x_i\in\mathcal{X}_i}\hspace{-0.1cm}	 U_{i}(x_{i};  w^{r+1}_i, y^r) + h_i(x_i) \hspace{-0.05cm}	 +\hspace{-0.05cm}	  \frac{\beta^r}{2}\|x_i-x_i^r\|^2
					\end{align}		
					
					\; {\bf [S2].} Perform the following update for the $y$-block:
					\begin{align}\label{eq:y-update}
					y^{r+1}=&\arg\max_{y\in\mathcal{Y}}\;  {U_{y}(y; x^{r+1}, y^r)}  - g(y) -\frac{\gamma^r}{2}\|y\|^2;
					\end{align}
						
					\; {\bf [S3].} If  converges, stop; otherwise, set  $r=r+1$, go to  {\bf [S1]}.
				\end{minipage}
			}
		\end{center}
		
Note that $\{\beta^r\ge 0\}$ and $\{\gamma^r\ge 0\}$  are some algorithm parameters, whose values will be specified shortly in the next section. Properly designing the regularization sequence $\{\gamma^r\}$ is the key to ensure that the algorithm works when the $y$ problem is concave but not {\it strongly} concave.
Further, each $U_i(\cdot; w,y):\mathbb{R}^{N}\to \mathbb{R}$ [resp. $U_y(\cdot, w, y)$]
is some approximation function of $f(\cdot, x_{-i},y)$ [resp. $f(x,\cdot)$].  For the  $U_i(\cdot)$'s the following assumptions hold.
}

\noindent{\underline{\sf{Assumption B}}.} Each $U_i(\cdot)$ satisfies the following conditions:
\begin{itemize}
	\item[B.1] {\bf (Strong convexity).}  Each $U_{i}(\cdot; w, y)$ is strongly convex with modulus $\mu_{i}>0$: {\small
	\begin{align*}
	& U_{i}(x_i; w, y) - U_{i}(z_i; w, y) \ge \langle \nabla_{z_i} U_{i}(z_i; w, y), x_i-z_i \rangle \nonumber\\
	&\quad + \frac{\mu_i}{2}\|x_i-z_i\|^2, \; \; \forall~w \in \mathcal{X}, y\in \mathcal{Y}, x_i, z_i\in \mathcal{X}_{i}.
	\end{align*}}
	\item[B.2] {\bf (Gradient consistency).} Each $U_{i}(\cdot; w,y)$ satisfies:
	\begin{align*}
	\hspace{-0.8cm} \nabla_{z_i} U_{i}({z_i}; x, y)\Bigr|_{z_i=x_i} = \nabla_{x_i} f(x,y), \; \forall~i, \; \forall~x\in \mathcal{X}, \; y\in \mathcal{Y}.
	\end{align*}
	\item[B.3] {\bf (Tight upper bound).} Each $U_{i}(\cdot; w,y)$ satisfies:
	\begin{align*}
	U_{i}(z_i; x, y) & \ge f(x,y), \;  \mbox{and}\; U_{i}(x_i; x, y) = f(x,y), \nonumber\\
	&\quad \; \forall~ x\in \mathcal{X}, y\in \mathcal{Y}, \; z_i\in \mathcal{X}_{i}.
	\end{align*}
		\item[B.4] {\bf (Lipschitz gradient).} Each $U_{i}(\cdot; w,y)$ satisfies:
		\begin{align*}
		&\|\nabla U_{i}(z_i; w, y) - \nabla U_{i}(v_i; w, y)\|\le L_{u_i}\|v_i-z_i\|, \\
		&\quad \; \forall~ w\in \mathcal{X}, y\in \mathcal{Y}, v_i, z_i\in \mathcal{X}_{i}.
		\end{align*}
\end{itemize}


Clearly, the $x$ update step {\bf[S1]} closely resembles the  BSUM  algorithm \cite{razaviyayn2013unified,meisam15BSUMsurvey}, which is designed for minimization problems. Similarly as in BSUM, approximation functions are used to
simplify the update for each subproblem; see \cite{meisam15BSUMsurvey} for a number of such
functions often used in signal processing
applications.

{However, a key difference from the BSUM, or for that matter, all successive convex
approximation (SCA) based algorithms such as the
inexact flexible parallel algorithm {(FLEXA)}
\cite{scutari13decomposition,scutari13flexible,Facchinei15}, the concave-convex procedure (CCCP) \cite{Yuille:2003:CP:773700.773708}, is the presence of
the {\it ascent} step in {\bf [S2]}. [Songtao: Too long consider to revise]}
This step is needed to deal with the inner maximization problem,
but unfortunately the use of it invalidates the existing analyses for SCA-type algorithms, because all of them critically
depend on consistently achieving some form of descent as the algorithms progress. As a result, how to properly
implement and analyze the proposed algorithm represents a major challenge.

 We note that it is not straightforward to design algorithms for one-sided non-convex min-max problem, as compared with non-convex minimization problems. For the former problem, simple algorithms like gradient descent-ascent can diverge (see Example 1 or \cite{daskalakis2018limit}), but if we specialize such an algorithm to the latter problem (which becomes the well-known gradient descent), then it will converge to a second-order stationary solution {\cite{pmlr-v49-lee16}.
 We refer the readers to a few recent works {\cite{lin2019gradient} \cite{thekumparampil2019efficient}} for more discussions.

\noindent	\underline{\textsf{Example 1.}}\cite{daskalakis2018limit} Consider a special case of problem \eqref{eq:problem}, where $K=1$ (a single block variable), and $A$ is a randomly generated matrix of size $N\times M$: $\min_{x\in\mathbb{R}^N}\max_{y\in\mathbb{R}^M}\; y^T A x.$
Let us apply a special case of HiBSA algorithm by utilizing the following approximation function:
{	\begin{align*}
	\hspace{-0.3cm}U_1(v; w,y) & =  y^T \hspace{-0.1cm} A v + \frac{\eta}{2}\|v-{w}\|^2\nonumber\\
	 U_y(u; x, y) & =  u^T A x - \frac{1}{2\lambda}\|u-y\|^2.
	\end{align*}}
Letting $\gamma^r=0$ and $\beta^r=0$ for all $r$, the HiBSA becomes an alternating gradient descent-ascent algorithm
	\begin{align}
	x^{r} = x^{r-1}- \frac{1}{\eta}A^T y^{r-1}, \quad y^{r} = y^{r-1} + {2\lambda} A x^{r}, \; \forall~r\label{eq:alter:min:max}.
	\end{align}

Unfortunately, one can verify that for almost any $A$, {\it regardless} the choices of $\eta, \lambda$,  \eqref{eq:alter:min:max} will not converge to the desired solution satisfying: $A^T y^*=0$ and $A x^*=0$; see Fig. \ref{fig:diverge}. This is because the linear system describing the dynamics of the vector $(x^r, y^r)$ is always unstable.
\hfill$\blacksquare$
\begin{wrapfigure}{r}{0.23\textwidth}
	\vspace{-0.7cm}
	\begin{center}
		\includegraphics[width=1\linewidth]{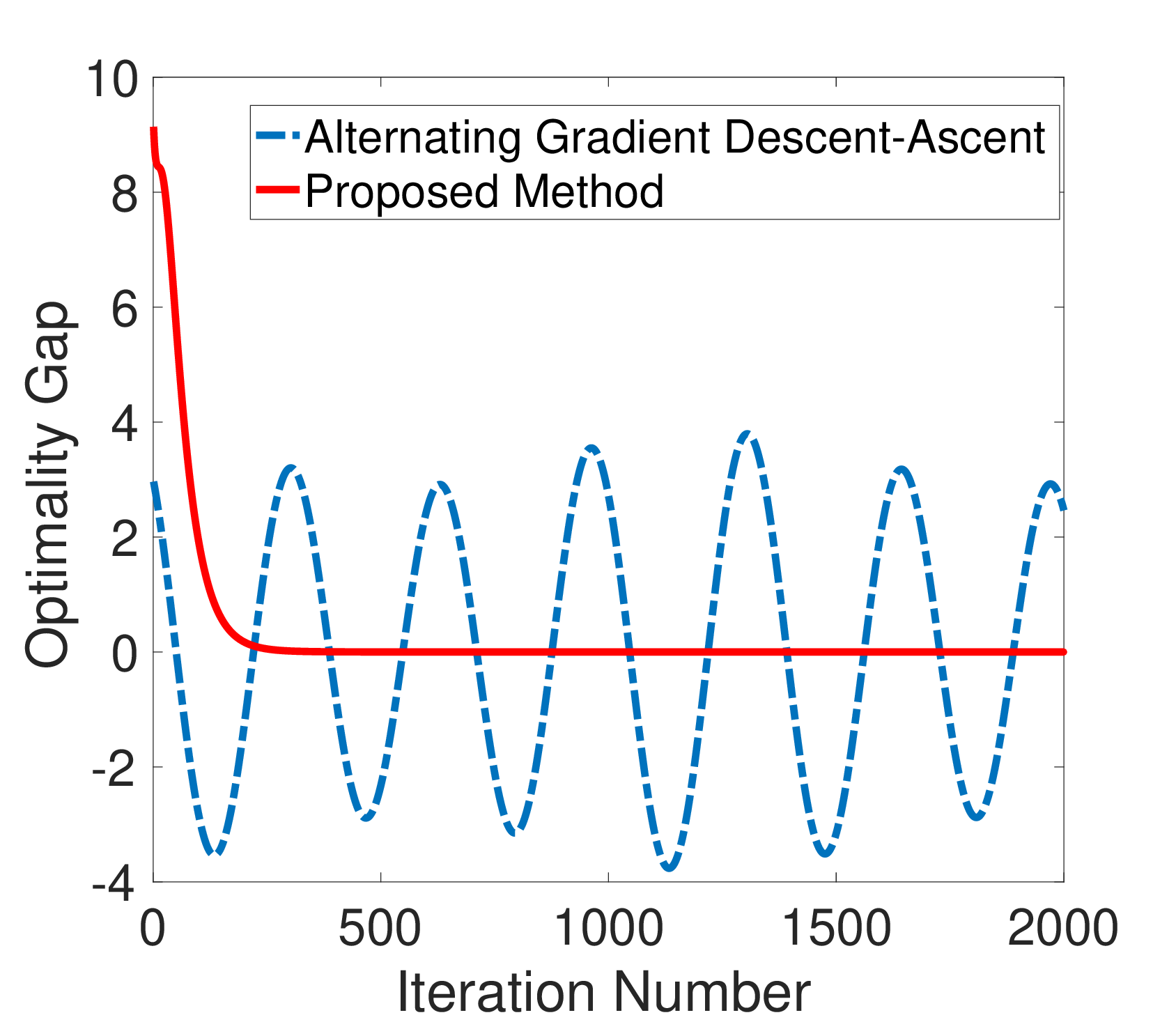}\vspace{-0.1cm}
		\vspace{-0.5cm}
		\caption{\footnotesize Behavior of \eqref{eq:alter:min:max} and Alg. 1. The y-axis is the 1st-order optimality gap $\|Ax\|^2+ \|y^T A\|^2$, which ideally should go to zero. The solid line represents the HiBSA algorithm with $\gamma^r=1/{\sqrt{r}}, \;\beta^r=r, \; \forall~r$. } \label{fig:diverge}
	\end{center}
\end{wrapfigure}

The above example motivates us to introduce {\it both} the proximal term ${\beta^r}/{2}\|x-x^r\|^2$  in {\bf [Step 1]}
of HiBSA, and the penalty term $-\frac{\gamma^r}{2}\|y^r\|^2$ in {\bf [Step 2]}. By properly selecting the sequences $\{\beta^r, \gamma^r\}$, we will show in the next section, that the HiBSA will converge for a wide class of problems (including Example 1 as a special case). 

\section{Theoretical Properties of HiBSA}
\label{sec:typestyle}
We start to present our main convergence results for the HiBSA.
 Our analysis will be divided into three cases according to the structure of  the coupling term $f(x,y)$.  Separately considering different cases of \eqref{eq:problem} is necessary, since the analysis and convergence guarantees could be different. Note that throughout this section, we will assume that $g(y)$ is convex, but {\it not} strongly convex. In case $\ell(x,y)$ is strongly concave in $y$, the strongly concave term will be absorbed into $f(x,y)$.

\subsection{Optimality conditions}\label{sub:opt}
First, let us elaborate on the type of solutions we would like to obtain for problem \eqref{eq:problem}. Because of the non-convexity involved in the minimization problem, we will not be able to use the classical measure of optimality for saddle point problems (i.e., the distance to a saddle point). Instead, we will adopt some kind of first-order stationarity conditions. To precisely state our condition, let us define the {\it proximity operator} for $x$ and $y$ blocks as follows:
\begin{align}\label{eq:prox}
&\mbox{\rm Px}^{\beta}_{i}(v_{i}) := \arg\min_{x_{i} \in \mathcal{X}}\;
h_i(x_i) +\frac{\beta}{2}\|x_{i}-v_i\|^2, \; \forall~i\in[K]\\
&\mbox{\rm Py}^{1/\rho}(w) := \arg\max_{y\in \mathcal{Y}}\;  - g(y) - \frac{1}{2\rho}\|y-w\|^2. \nonumber
\end{align}%
Moreover, we define the stationarity gap for problem \eqref{eq:problem} as:{\small
\begin{equation}\label{eq.optgap}
{\nabla} {\mathcal{G}^{\beta}_{\rho}}(x,y):=\left[
\begin{array}{c}\beta(x_1-\mbox{\rm Px}^{\beta}_{1}(x_1-1/\beta\nabla_{x_1}f(x,y)))
\\
\vdots
\\
\beta(x_K-\mbox{\rm Px}^{\beta}_{K}(x_K-1/\beta\nabla_{x_K} f(x,y) ))
\\
1/\rho(y - \mbox{\rm Py}^{1/\rho}(y+ \rho\nabla_y f(x,y)))\end{array}\right].
\end{equation}
}%
We say that a tuple $(x^*,y^*)$ is a first-order stationary solution for problem \eqref{eq:problem} if it holds that:
\begin{align}\label{eq:stationarity}
\|{\nabla}{ \mathcal{G}^{\beta}_{\rho}(x^*,y^*)} \| =0.
\end{align}
To see that \eqref{eq:stationarity} makes sense, first note that if $h\equiv 0, g\equiv 0, \mathcal{Y}=\mathbb{R}^{M}, \mathcal{X}=\mathbb{R}^{NK}$, then it reduces to the condition
$\|[\nabla_x f(x^*,y^*); \nabla_y f(x^*,y^*)]\|=0$, which is {\it independent} of the algorithm parameters $(\beta, \rho)$.}
Further, we  can check that if $y$ is not present, then condition \eqref{eq:stationarity} is equivalent to the first-order stationary condition for the resulting non-convex minimization problem (see, e.g., \cite{bertsekas99}). Further, if $x$  is not present, then condition \eqref{eq:stationarity} simply says that ${ y^{\ast}} \in \arg\max_{y\in Y} \{f(y)-g(y)\}$.

Following the above definition, we will say that $(x^*,y^*)$ is an $\epsilon$-stationary solution if the following holds
\begin{align}\label{eq:epsilon:stationarity}
\|{{\nabla} \mathcal{G}^{\beta}_{\rho}}{(x^*,y^*)}\| \le\epsilon,
\end{align}
As mentioned in the introduction, the stationarity conditions \eqref{eq:stationarity} and \eqref{eq:epsilon:stationarity} are related to the stationarity conditions utilized in \cite{qihang18}, \cite{lin2019gradient}. To illustrate this point, consider the case where $h\equiv 0, g\equiv 0, \mathcal{X}=\mathbb{R}^{NK}$; the stationarity condition \eqref{eq:stationarity} reduces to:
	$$\|\nabla \mathcal{G}(x,y) \|= \|[\nabla_x f(x,y),y-\textrm{proj}_{\mathcal{Y}}(y+\nabla_y f(x,y))]^{T} \|=0.$$ First, consider the case where $f$ is strongly concave in $y$. Then, it can be shown that if $(x^{\ast},y^{\ast})$ is an $\epsilon$-stationary point in the sense of \eqref{eq:epsilon:stationarity}, then it is also an $\mathcal{O}(\epsilon)$ stationary point in the sense defined in \cite{qihang18}, that is:
	\begin{align}
	\|\nabla F(x^{\ast})\| \leq \mathcal{O}(\epsilon), \; \; F(x) := \max_{y \in \mathcal{Y}} f(x,y).
	\end{align}
	Moreover, in the case where $f$ is concave in $y$, for an $\epsilon$-stationary point according to definition \eqref{eq:epsilon:stationarity},  it holds that $\|\nabla F_{1/2 \ell}(x^{\ast})\| \leq \mathcal{O}(\epsilon)$, where $F_{1/2 \ell}$ is the Moreau envelope of $F$ defined as
	\begin{align}
	F_{1/2\ell} (x) = \min_{w} F(w) + {\ell}\|w-x\|^2.
	\end{align} 
	For more details the interested reader can refer to  \cite{lin2019gradient}.

Based on the above definition of first-order stationarity, we establish the equivalence between a few optimization formulations discussed in Section \ref{sec:intro}.
\begin{proposition}
{\it Problems \eqref{eq:distributed} and \eqref{eq:reform} are equivalent, in the sense that every KKT point for problem \eqref{eq:distributed} is a first-order stationary solution of \eqref{eq:reform} [in the sense of \eqref{eq:stationarity}], and vice versa.}
\end{proposition}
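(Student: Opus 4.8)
The plan is to recognize $y$ as the Lagrange multiplier attached to the linear equality constraint in \eqref{eq:distributed}, so that the objective of \eqref{eq:reform} is exactly the Lagrangian of \eqref{eq:distributed}, and then to match the KKT system of \eqref{eq:distributed} term by term against the stationarity condition $\nabla\mathcal{G}=0$ of \eqref{eq:reform}. Writing $B := A\otimes I_N$ for brevity, the Lagrangian of \eqref{eq:distributed} (with multiplier $y$ on $Bx-\tilde{x}=0$ and the constraint $\tilde{x}\in Z$ retained) is $L(x,\tilde{x},y) = f(x)+h(x)+\langle y, Bx-\tilde{x}\rangle$, which coincides verbatim with the objective $\ell$ of \eqref{eq:reform}. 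In the language of \eqref{eq:problem}, the minimization blocks here are $x$ (feasible set $\mathbb{R}^{KN}$, nonsmooth part $h$) and $\tilde{x}$ (feasible set $Z$, no nonsmooth part), the smooth coupling is $f(x)+\langle y, Bx-\tilde{x}\rangle$, and the $y$-block is unconstrained with $g\equiv 0$.

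For the forward direction I would start from a KKT point $(x^\star,\tilde{x}^\star,y^\star)$ of \eqref{eq:distributed} and record its three conditions: (i) stationarity in the unconstrained block $x$, namely $0\in\nabla f(x^\star)+\partial h(x^\star)+B^{T}y^\star$; (ii) stationarity in $\tilde{x}$ over $Z$, i.e. the variational inequality $\langle -y^\star,\tilde{x}-\tilde{x}^\star\rangle\ge 0$ for all $\tilde{x}\in Z$, equivalently $y^\star\in N_Z(\tilde{x}^\star)$; and (iii) primal feasibility $Bx^\star-\tilde{x}^\star=0$ with $\tilde{x}^\star\in Z$. The remaining work is to show each of these equals the vanishing of the corresponding block of \eqref{eq.optgap}. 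This rests on the standard fixed-point characterization of the proximity operator: a point $x_i$ satisfies $x_i=\mbox{\rm Px}^{\beta}_{i}(x_i-\tfrac{1}{\beta}\nabla_{x_i}\ell)$ if and only if $0\in\nabla_{x_i}\ell+\partial h_i(x_i)+N_{\mathcal{X}_i}(x_i)$, a statement that is independent of $\beta$, with the analogous equivalence holding for the $y$-block through $\mbox{\rm Py}^{1/\rho}$. Applying this with $\nabla_x\ell=\nabla f+B^{T}y$ recovers (i); applying it to the $\tilde{x}$-block (where the prox reduces to Euclidean projection onto $Z$ since there is no nonsmooth term, and $\nabla_{\tilde{x}}\ell=-y$) recovers (ii); and the $y$-block entry $\nabla_y\ell=Bx-\tilde{x}$, with $\mathcal{Y}=\mathbb{R}^{|\mathcal{E}| N}$ and $g\equiv 0$, recovers (iii). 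Hence $\nabla\mathcal{G}(x^\star,\tilde{x}^\star,y^\star)=0$.

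The reverse direction is then immediate, because the equivalences above are genuine ``if and only if'' statements: unfolding $\nabla\mathcal{G}=0$ block by block through the same fixed-point characterizations reproduces exactly conditions (i)--(iii), i.e. the KKT system of \eqref{eq:distributed}. Since each step is reversible and the stationarity is provably independent of $(\beta,\rho)$, the two solution concepts coincide.

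The main obstacle, and the only place requiring care, is the $\tilde{x}$-block, which carries a genuine feasible-set constraint $\tilde{x}\in Z$ but no associated nonsmooth function: I must verify that the prox-fixed-point condition in \eqref{eq.optgap} collapses to the correct projection/normal-cone statement, and in particular that the equality-constraint multiplier $y^\star$ is precisely the normal-cone certificate $y^\star\in N_Z(\tilde{x}^\star)$. A secondary point worth stating explicitly is the parameter-independence of \eqref{eq:stationarity}, so that a KKT point maps to a stationary point for every admissible $(\beta,\rho)$ and conversely; this is exactly what makes the prox-based gap a legitimate surrogate for the KKT residual. Everything else is a routine term-by-term translation.
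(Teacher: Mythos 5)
Your proposal is correct and follows essentially the same route as the paper's proof: identify the objective of \eqref{eq:reform} as the Lagrangian of \eqref{eq:distributed}, unfold the prox-based stationarity condition \eqref{eq.optgap} block by block into first-order/normal-cone conditions, and match these against the KKT system (the paper writes the KKT system as a single coupled variational inequality and assumes $N=1$, but this is only cosmetic). Your explicit appeal to the $\beta$- and $\rho$-independent ``if and only if'' fixed-point characterization of the proximity operators is exactly what the paper's two-directional verification amounts to, so no substantive difference exists.
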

\noindent{\bf Proof.}
For simplicity of notation we assume $N=1$. Consider the following KKT conditions for problem \eqref{eq:distributed}
\begin{align}
\begin{split}
&\hspace{-0.1cm}\langle \nabla_x f(x^*) + \xi^*(x^*) \hspace{-0.1cm} +{A}^{T} y^{*},x -x^{*}\rangle \\
& \quad \quad \quad - \left\langle y^{\ast},\tilde{x} - \tilde{x}^{\ast}\right\rangle \geq 0, \forall\; {\rm feasible}\; (x,\tilde{x}) \\
& {A}{x}^*=\tilde{x}^{\ast} \label{eq:opt_distr}
\end{split}
\end{align}
where $\xi^*(x^*) \in \partial h(x^*)$ and $y$ is the Lagrange multiplier.
Now consider a stationary point $(x^{\ast},\tilde{x}^{\ast},y^{\ast})$
of problem \eqref{eq:reform}. Then the stationarity condition \eqref{eq:stationarity} implies that{\small
\begin{subequations}
\begin{align}
x^{\ast} &= \arg\min_{x}\; \left\langle  {A}^{T} y^{*} + \nabla_x f(x^{\ast}), x - x^{\ast} \right\rangle +h({x}) + \frac{\beta}{2}\|x-x^{\ast}\|^2 \label{eq:opt4}\\
\tilde{x}^{\ast}& = \arg\min_{\tilde{x} \in Z}\; \left\langle -y^{\ast}, \tilde{x}- \tilde{x}^{\ast}\right\rangle  + \frac{\beta}{2}\|\tilde{x}-\tilde{x}^{\ast}\|^2 \\
y^{\ast}& = \arg\max_{y}\; \left\langle Ax^{\ast} -\tilde{x}^{\ast}, y- y^{\ast}\right\rangle  - \frac{1}{2\rho}\|y-y^{\ast}\|^2  \label{eq:opt5}
\end{align}
\end{subequations}}%
The optimality conditions for these problems imply
\begin{align}
& {A} ^{T} y^{*} + \nabla_x f(x^{\ast})+ \xi(x^{\ast})=0 \label{eq:opt_distr1}\\
&\left\langle -y^{\ast}, \tilde{x}- \tilde{x}^{\ast}\right\rangle  \geq 0, \; \forall \tilde{x} \in Z , \quad {A}x^{\ast} - \tilde{x}^{\ast}=0 \label{eq:opt_distr3}.
\end{align}
Clearly, the conditions \eqref{eq:opt_distr1} -- \eqref{eq:opt_distr3} imply \eqref{eq:opt_distr}.

Conversely, suppose \eqref{eq:opt_distr} is true.  Set $x = x^{\ast}$ in \eqref{eq:opt_distr} we  obtain condition \eqref{eq:opt_distr3}. Moreover, in order to obtain condition \eqref{eq:opt_distr1} we set $\tilde{x}=\tilde{x}^{\ast}$ in \eqref{eq:opt_distr} and take into account the fact that \eqref{eq:opt_distr} holds $\forall x \in \mathbb{R}^{KN}$. The proof is completed.
\QED

\begin{proposition}
\it	Consider the problem: $\max\limits_{x \in \mathcal{X}} \min\limits_{k}  R_{k}(x)$,  and its reformulation \eqref{eq:max-min-fair}. They are equivalent in the sense that, an equivalent smooth reformulation of the former has the same first-order stationary solutions as those of the latter [in the sense of \eqref{eq:stationarity}].
\end{proposition}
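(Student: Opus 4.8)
The plan is to mirror the proof of Proposition~1: exhibit an explicit smooth reformulation of the max-min problem, write down its KKT system, and then match those conditions term-by-term with the first-order stationarity condition \eqref{eq:stationarity} specialized to \eqref{eq:max-min-fair}, establishing a correspondence in both directions.

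First I would introduce the standard epigraph variable $t$ and rewrite $\max_{x\in\mathcal{X}}\min_k R_k(x)$ as the smooth constrained program $\min_{x\in\mathcal{X},\,t}\,-t$ subject to $t-R_k(x)\le 0$ for all $k$. Attaching multipliers $y_k\ge 0$ to these constraints and forming the Lagrangian $L=-t+\sum_k y_k(t-R_k(x))$, the KKT system reads as follows: stationarity in $t$ gives $\sum_k y_k=1$, which together with $y_k\ge 0$ is exactly $y\in\Delta$; stationarity in $x$ gives the variational inequality $\langle -\sum_k y_k\nabla_x R_k(x),\,x'-x\rangle\ge 0$ for all $x'\in\mathcal{X}$; and the remaining conditions are primal feasibility $t\le R_k(x)$ and complementary slackness $y_k(t-R_k(x))=0$.

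Next I would specialize \eqref{eq:stationarity}--\eqref{eq.optgap} to \eqref{eq:max-min-fair}, where $f(x,y)=-\sum_k R_k(x)y_k$, $\mathcal{Y}=\Delta$, and $h\equiv g\equiv 0$, so both proximity operators in \eqref{eq.optgap} reduce to Euclidean projections. Vanishing of the $x$-block of $\nabla\mathcal{G}$ is equivalent to $x=\mathrm{Proj}_{\mathcal{X}}(x-\tfrac1\beta\nabla_x f)$, i.e. to the same $x$-variational inequality as above, since $\nabla_x f=-\sum_k y_k\nabla_x R_k(x)$. Vanishing of the $y$-block is equivalent to $\langle\nabla_y f,\,y'-y\rangle\le 0$ for all $y'\in\Delta$; using $\nabla_{y_k}f=-R_k(x)$ this says that $y$ minimizes $\sum_k R_k(x)y'_k$ over the simplex, hence is supported only on the indices attaining $\min_k R_k(x)$.

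The crux is then to identify the epigraph variable with the minimum value: setting $t:=\min_k R_k(x)$ makes primal feasibility automatic, while the support characterization of $y$ is precisely complementary slackness $y_k(t-R_k(x))=0$. With this identification the two condition sets coincide, yielding a one-to-one correspondence between KKT points of the smooth reformulation and first-order stationary solutions of \eqref{eq:max-min-fair}. The main obstacle, though conceptually simple, is exactly this matching of the auxiliary variable $t$ and the simplex multipliers $y$: one must verify that maximizing the linear functional $-\sum_k R_k(x)y_k$ over $\Delta$ reproduces the nonsmooth $\min_k$ operation and that its optimizer $y$ plays the role of the constraint multipliers, so that the $y$-block variational inequality encodes complementary slackness and nothing more.
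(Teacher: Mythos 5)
Your proposal is correct and follows essentially the same route as the paper: the same epigraph-style smooth reformulation (your $\min -t$ s.t. $t\le R_k(x)$ is the paper's $\max\lambda$ s.t. $R_k(x)\ge\lambda$), the same KKT system, and the same identification $t=\lambda^*=\min_k R_k(x^*)$ to match conditions in both directions. The only cosmetic difference is that where you invoke the standard fact that a linear functional over the simplex is minimized exactly by distributions supported on the argmin indices, the paper proves this by hand via an explicit test-point substitution ($y_i=0$, $y_j=y_i^*+y_j^*$) in the variational inequality; these are the same argument.
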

\noindent{\bf Proof.} A well-known equivalent smooth formulation of the min-utility maximization problem is given below (equivalent in that the global optimal of these two problems are the same)
\begin{align}\label{eq:problem:minmax:equiv}
\max_{\lambda, x\in \mathcal{X}}\;\lambda,\quad \mbox{s.t.}\; \; R_k(x)\ge \lambda, {\; \forall k}.
\end{align}
The partial KKT conditions of the above problem are
\begin{align} \label{eq:kkt2}
&\left\langle \sum_{i=1}^{K}\hat{y}_i   \nabla_{x} R_i(\hat{x}), x -\hat{x} \right\rangle \leq 0, \; \forall~x\in \mathcal{X},\\
&{\sum_{i}\hat{y}_i =1}, \; \hat{y}_i\ge 0, \; \hat{y}_i (\hat{\lambda}- R_i(\hat{x}))=0, \; R_i(\hat{x})\ge \hat{\lambda}, \;  \forall~i,\nonumber
\end{align}
where $\{\hat{y}_{i} \}_{i=1}^{K}$ are the respective Lagrange multipliers, which together with $(\hat{\lambda}, \hat{x})$ satisfy the KKT condition.

Now consider a stationary point $(x^{\ast},y^{\ast})$
of problem \eqref{eq:max-min-fair}. Then the optimality conditions \eqref{eq:stationarity} imply that{\small
	\begin{subequations}
	\begin{align}
	\hspace{-0.4cm} x^{\ast} &\hspace{-0.1cm}= \arg\min_{x_i \in \mathcal{X}}\hspace{-0.1cm}\bigg\langle \sum\limits_{i=1}^{K} -y_i^{\ast} \nabla_{x} R_{i}(x^{\ast}) , x - x^{\ast} \bigg\rangle +\frac{\beta}{2}\|x-x^{\ast}\|^2\label{eq:opt2}\\
	\hspace{-0.4cm} y^{\ast}& \hspace{-0.1cm} = \arg\max_{y\in \Delta}\; \left\langle -R(x^{\ast}), y-y^{\ast}\right\rangle  - \frac{1}{2\rho}\|y-y^{\ast}\|^2, \label{eq:opt3}
	\end{align}
	\end{subequations}}%
\hspace{-0.1cm}where $R(x^{\ast}) := [ R_{1}(x^{\ast}); \ldots; R_K(x^{\ast})]$. Let us define 
$$ \lambda^* =  \min\limits_{i=1, \ldots, K} \{ R_i(x^{\ast})\}$$ 
so it holds that $R_i(x^{\ast})\ge  \lambda^*, \;  \forall~i$.

Plugging $(x^*,y^*)$ into the optimality conditions of \eqref{eq:opt2}, \eqref{eq:opt3}, we obtain:
\begin{subequations}
\begin{align}
&\left\langle \sum\limits_{i=1}^{K} -y_i^{\ast} \nabla_{x_i} R_{i}(x^{\ast}) , x - x^{\ast}  \right\rangle \geq 0, \; \forall x \in \mathcal{X} \label{eq:power_opt1}\\
&\langle -R(x^{\ast}) , y-y^{\ast} \rangle \leq 0, \; \forall y \in \Delta, \; y^{\ast} \in \Delta \label{eq:power_opt2}.
\end{align}
\end{subequations}
For all $i$ such that $R_{i}(x^{\ast})= \lambda^*$ obviously it holds that $y_i^* ( \lambda^*- R_i(x^*))=0$. Let $i,j$ be indices such that $R_{i}(x^*)> \lambda^*$ and $R_{j}(x^*)= \lambda^*$. Then, plugging $y_{i}=0,y_{j}=y_{i}^{\ast}+y_{j}^{\ast}$ and $y_{k}=y_{k}^{\ast},k \neq i,j$ into \eqref{eq:power_opt2} yields $y_{i}^{\ast}(R_{j}(x^{\ast})-R_{i}(x^{\ast})) \geq 0$. Because $R_{j}(x^{\ast})-R_{i}(x^{\ast})<0$ and $y_{i}^{\ast} \geq 0$ it must necessarily hold $y_{i}^{\ast}=0$ and thus  $y_{i}^{\ast}( \lambda^*-R_{i}(x^{\ast})) = 0$. As a result the conditions \eqref{eq:kkt2} are satisfied.

Conversely, assume $(x^{\ast},y^{\ast})$ satisfies conditions \eqref{eq:kkt2}. {Note that $R_i(x^*) y_i\ge  \lambda^* y_i$ for any $y\in \Delta$}, so
\begin{align*}
&\langle R(x^{\ast}) , y-y^{\ast} \rangle = \sum\limits_{i=1}^{K} R_i(x^{\ast})(y_{i}-y_{i}^{\ast}) \geq \sum\limits_{i=1}^{K}  \lambda^* (y_{i}-y_{i}^{\ast}) =0,
\end{align*}
for all $y \in \Delta, y^{\ast} \in \Delta$. It is not difficult to see that $(x^{\ast},y^{\ast})$ satisfy the rest of the conditions in \eqref{eq:power_opt1} -- \eqref{eq:power_opt2}. As a result the opposite direction also holds. \QED

\subsection{Convergence analysis:  $f(x,y)$ strongly concave in $y$}\label{sub:convergence:1}
Starting this subsection, we will analyze the convergence of HiBSA algorithm. For the ease of presentation, we relegate all the details of the proof to the appendix.

We will first consider a subset of problem \eqref{eq:problem}, where $f(x,y)$ is strongly concave  in $y$. Specifically, we assume the following.

\noindent{\underline{\sf{Assumption C-1}}}. For any $x\in X$, $f(\cdot)$ satisfies the following:
\begin{equation*}
f(x,z)-f(x,y) \le \langle \nabla_{y} f(x,y), z-y\rangle - \frac{\theta}{2} \|z-y\|^2, \; \forall~y,z \in \mathcal{Y},
\end{equation*}
where $\theta>1$ is the strong concavity constant.
{Further assume:
\begin{align}\label{eq:y:approx}
U_y(u; x,y) = \langle\nabla_y f(x, y), u-y\rangle -\frac{1}{2\rho}\|u-y\|^2.
\end{align}
{ where $\rho>0$ is some fixed constant.}
\hfill $\blacksquare$

We note that it can be verified that the jamming problem \eqref{eq:jammer} satisfies Assumption C-1.
Next we will present a series of lemmas which lead to our main result in this subsection. The detailed proof can be found in Appendix Sec. \ref{app:Lemma1} -- \ref{app:T1}.

\begin{lemma}\label{le.descent:x}
(Descent Lemma on $x$)
{\it Suppose that Assumptions A, {B} and C-1 hold. Let $(x^r,y^r)$ be a sequence generated by HiBSA, with $\gamma^r=0$, and $\beta^r=\beta>0, \; \forall~r$.
Then we have the following descent estimate:
\vspace{-2mm}
\begin{align}\notag
\ell(x^{r+1},y^{r})-\ell(x^{r},y^r)\le - \sum_{i=1}^{K}\left(\beta + \mu_i- \frac{L_{x_i}}{2}\right)\|x_i^{r+1}-x_i^r\|^2.
\end{align}}
\end{lemma}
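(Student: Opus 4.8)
The plan is to establish the descent estimate by exploiting the defining properties of the approximation functions $U_i$ in Assumption B, combined with the strong convexity that the proximal term adds to each $x_i$ subproblem. Since $\gamma^r=0$ and the $y$-block is held fixed at $y^r$ throughout Step [S1], the relevant quantity is the change in $\ell$ caused solely by updating the $x_i$ blocks one at a time. My first move is to telescope across blocks: I would write
\begin{align}\notag
\ell(x^{r+1},y^r) - \ell(x^r,y^r) = \sum_{i=1}^{K}\left[\ell(w^{r+1}_{i+1},y^r) - \ell(w^{r+1}_i,y^r)\right],
\end{align}
where $w^{r+1}_i$ collects the already-updated blocks $x_1^{r+1},\dots,x_{i-1}^{r+1}$ together with the not-yet-updated $x_i^r,\dots,x_K^r$, so that consecutive terms differ only in the single block being optimized. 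This reduces the problem to bounding the per-block decrease.

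For each block, the key is to control $f(w^{r+1}_{i+1},y^r)-f(w^{r+1}_i,y^r)+h_i(x_i^{r+1})-h_i(x_i^r)$. First I would use the tight upper bound property B.3 to replace $f$ at the new point by its surrogate, namely $f(w^{r+1}_{i+1},y^r)\le U_i(x_i^{r+1};w^{r+1}_i,y^r)$, while B.3 also gives $f(w^{r+1}_i,y^r)=U_i(x_i^r;w^{r+1}_i,y^r)$ (equality at the current iterate). Then the decrease in $\ell$ is bounded by the decrease in the surrogate objective $U_i(\cdot)+h_i(\cdot)$. Since $x_i^{r+1}$ minimizes $U_i(x_i;w^{r+1}_i,y^r)+h_i(x_i)+\tfrac{\beta}{2}\|x_i-x_i^r\|^2$, and by B.1 the surrogate $U_i$ is strongly convex with modulus $\mu_i$ so the proximally-regularized objective is $(\mu_i+\beta)$-strongly convex, the standard optimality inequality for a strongly convex minimizer yields
\begin{align}\notag
U_i(x_i^{r+1};w^{r+1}_i,y^r)+h_i(x_i^{r+1}) \le U_i(x_i^r;w^{r+1}_i,y^r)+h_i(x_i^r) - \left(\mu_i+\beta\right)\|x_i^{r+1}-x_i^r\|^2 + \frac{\beta}{2}\|x_i^{r+1}-x_i^r\|^2.
\end{align}
Combining the surrogate-decrease bound with B.3 then produces a per-block estimate of the form $-(\mu_i+\beta)\|x_i^{r+1}-x_i^r\|^2$ plus a remainder.

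The main obstacle, and the step requiring the most care, is reconciling the bound above with the target coefficient $\beta+\mu_i-\tfrac{L_{x_i}}{2}$, which carries a $+L_{x_i}/2$ correction rather than the $-L_{x_i}/2$ one might naively expect. The cleanest route, I expect, is not to invoke B.3 as an upper bound at the new point, but instead to use the gradient-consistency property B.2 together with the Lipschitz-gradient assumption A.3: B.2 matches $\nabla_{x_i}U_i$ with $\nabla_{x_i}f$ at the current iterate, so the first-order optimality condition for the $x_i$ subproblem can be transcribed into a statement about $\nabla_{x_i}f(w^{r+1}_i,y^r)$, and then the descent lemma for $L_{x_i}$-smooth functions (applied to $f(\cdot,y^r)$ along the $x_i$ direction) contributes the $L_{x_i}/2$ term with the correct sign. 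I would assemble the strong-convexity gain $(\mu_i+\beta)$ from the subproblem optimality against the smoothness loss $L_{x_i}/2$ from the descent lemma, the proximal term cancelling appropriately, to land exactly on $-(\beta+\mu_i-\tfrac{L_{x_i}}{2})\|x_i^{r+1}-x_i^r\|^2$. Summing over $i$ and telescoping then gives the claimed inequality. Throughout, the fact that $y^r$ is fixed in [S1] is what lets me treat this purely as a multi-block minimization descent, so that no interaction with the ascent step enters this lemma.
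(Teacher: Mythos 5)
Your final route --- telescoping over blocks, applying the $L_{x_i}$-smoothness descent lemma to $f(\cdot,y^r)$ together with convexity of $h_i$, then transcribing the subproblem optimality condition through B.2's gradient consistency and B.1's strong monotonicity to extract the full $(\mu_i+\beta)$ gain against the $L_{x_i}/2$ loss --- is exactly the paper's proof of this lemma. The surrogate detour via B.3 that you considered first was rightly abandoned: it produces a coefficient of the form $\beta+\mu_i/2$ (with no $L_{x_i}$ term) rather than the stated $\beta+\mu_i-L_{x_i}/2$, and it plays no role in the paper's argument.
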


\begin{lemma}\label{le.descent:y}
(Descent Lemma on $y$)
{\it Suppose that Assumptions A, {B} and C-1 hold. Let $(x^r,y^r)$ be a sequence generated by HiBSA, with $\gamma^r=0$, and $\beta^r=\beta>0, \; \forall~r$.
Then we have the following descent estimate:
\begin{align}\notag
&\ell(x^{r+1},y^{r+1})-\ell(x^{r+1},y^r)\le \frac{1}{\rho}\|y^{r+1}-y^r\|^2\\
&\quad -\left({\theta}-(\frac{1}{2\rho}+\frac{\rho L^2_y}{2})\right)\|y^r-y^{r-1}\|^2+\frac{\rho {L^2_y}}{2}\|x^{r+1}-x^r\|^2.\nonumber
\end{align}}
\end{lemma}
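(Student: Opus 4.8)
The plan is to bound the one-step ascent in $\ell$ produced by the maximization step {\bf [S2]}, exploiting that with $\gamma^r=0$ and the surrogate \eqref{eq:y:approx} the update is exactly the proximal gradient-ascent step $y^{r+1}=\mbox{\rm Py}^{1/\rho}(y^r+\rho\nabla_y f(x^{r+1},y^r))$. Writing $G^r:=\nabla_y f(x^{r+1},y^r)$ and noting that $\ell(x^{r+1},y^{r+1})-\ell(x^{r+1},y^r)=[f(x^{r+1},y^{r+1})-f(x^{r+1},y^r)]-[g(y^{r+1})-g(y^r)]$, I would first apply the strong concavity Assumption C-1 to the $f$-difference and the subgradient inequality for the convex $g$, obtaining $\ell(x^{r+1},y^{r+1})-\ell(x^{r+1},y^r)\le \langle G^r-\zeta^r,\, y^{r+1}-y^r\rangle-\frac{\theta}{2}\|y^{r+1}-y^r\|^2$ for some $\zeta^r\in\partial g(y^r)$.

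The core difficulty is that {\bf [S2]} performs only a single, \emph{inexact} ascent step: the gradient is evaluated at the stale point $y^r$ rather than at the true maximizer, so—as Example 1 shows—no genuine per-iteration descent on $\ell$ can be expected in isolation. The remedy is a ``one step behind'' coupling. I would write the first-order optimality condition of the $y$-update both at iteration $r$ and at iteration $r-1$; testing the iteration-$(r-1)$ condition at the point $y^{r+1}$ lets me replace $G^r$ by the previous gradient $G^{r-1}:=\nabla_y f(x^r,y^{r-1})$ plus the proximal residual $\frac1\rho(y^r-y^{r-1})$ and the gradient gap $G^r-G^{r-1}$. This produces the two bilinear terms $\frac1\rho\langle y^r-y^{r-1},\, y^{r+1}-y^r\rangle$ and $\langle G^r-G^{r-1},\, y^{r+1}-y^r\rangle$.

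Next I would split the gradient gap as $G^r-G^{r-1}=[\nabla_y f(x^{r+1},y^r)-\nabla_y f(x^{r+1},y^{r-1})]+[\nabla_y f(x^{r+1},y^{r-1})-\nabla_y f(x^r,y^{r-1})]$. The Lipschitz estimate \eqref{eq:Ly} bounds the $x$-increment part by $L_y\|x^{r+1}-x^r\|$, which after Young's inequality (with parameter $\rho$) yields the $\frac{\rho L_y^2}{2}\|x^{r+1}-x^r\|^2$ term; strong concavity of $f$ in $y$ (equivalently, strong monotonicity of $\nabla_y f$, giving $\langle \nabla_y f(x^{r+1},y^r)-\nabla_y f(x^{r+1},y^{r-1}),\, y^r-y^{r-1}\rangle\le-\theta\|y^r-y^{r-1}\|^2$) is the source of the negative contraction $-\theta\|y^r-y^{r-1}\|^2$. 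Finally I would convert each remaining cross term into squared norms by Young's inequality, choosing the free parameters so that the $\|y^{r+1}-y^r\|^2$ coefficients collect to $\frac1\rho$ and the $\|y^r-y^{r-1}\|^2$ coefficients collect to $-(\theta-\frac1{2\rho}-\frac{\rho L_y^2}{2})$, yielding the stated estimate.

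The step I expect to be the main obstacle is precisely this last balancing. A maximization step unavoidably contributes a positive $\frac1\rho\|y^{r+1}-y^r\|^2$, and the entire value of the lemma is that strong concavity and the previous-iteration optimality condition turn the gradient-gap and proximal-residual terms into a \emph{negative} multiple of the previous displacement $\|y^r-y^{r-1}\|^2$. Only when $\theta$ is large enough relative to $\frac1\rho$ and $\rho L_y^2$ will the subsequent telescoping across consecutive iterations convert the current-step penalty on $\|y^{r+1}-y^r\|^2$ into a summable descent; arranging the constants so that this charge is exactly absorbed by the next iteration's contraction is the delicate part, and it is what later forces the specific admissible range of the step/regularization parameter $\rho$.
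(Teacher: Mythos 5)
Your proposal is correct and follows essentially the same route as the paper's proof: a (strong) concavity/supergradient inequality at $y^r$, the ``one step behind'' coupling via the optimality conditions of two consecutive $y$-updates, a split of the gradient gap into an $x$-increment part (Lipschitz plus Young, giving $\tfrac{\rho L_y^2}{2}\|x^{r+1}-x^r\|^2$) and a $y$-increment part (strong monotonicity of $\nabla_y f$, giving $-\theta\|y^r-y^{r-1}\|^2$), with the polarization/quadrilateral identity absorbing the $\|v^{r+1}\|^2$ terms. The only cosmetic deviations are that you split the gradient gap at the corner $(x^{r+1},y^{r-1})$ rather than $(x^r,y^r)$ and retain an extra $-\tfrac{\theta}{2}\|y^{r+1}-y^r\|^2$ from using strong (rather than plain) concavity in the first inequality, which is simply dropped at the end.
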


\begin{lemma}\label{le.condalg1}
{\it Suppose that Assumptions A, {B} and C-1 hold. Let $(x^r,y^r)$ be a sequence generated by HiBSA, with $\gamma^r=0$, and $\beta^r=\beta>0, \; \forall~r$. Let us define a potential function as {\small
\begin{align}\label{potential_111}
\mathcal{P}^{r+1}\hspace{-0.1cm}:= {\ell(x^{r+1},y^{r+1})}+\bigg(\frac{2}{\rho^2\theta} + {\frac{1}{2\rho}} -4(\frac{1}{\rho}-\frac{L^2_y}{2{\theta}})\bigg)\|y^{r+1}-y^r\|^2.
\end{align}}%
When the following conditions are satisfied:
\begin{equation} \label{eq:case1_cond}
\rho<\frac{\theta}{4L^2_y},\quad \beta >    L^2_y\left(\frac{2}{\theta^2\rho}+ \frac{\rho}{2}\right)  + \frac{L_{x_{i}}}{2} - \mu_{i}, \; \forall i
\end{equation}
then there exist positive constants $c_1,\{c_{2i}\}_{i=1}^{N}$ such that:
\begin{equation}\label{eq.descp}
\mathcal{P}^{r+1}-\mathcal{P}^{r}< -c_1\|y^{r+1}-y^{r}\|^2 - \sum\limits_{i=1}^{K} c_{2i}\|x_{i}^{r+1}-x_{i}^r\|^2.
\end{equation}}
\end{lemma}

Combining the above analysis, we can obtain the following convergence guarantee for the HiBSA algorithm. 
\begin{theorem}\label{th.main}
{\it Suppose that Assumptions A, {B},  C-1 hold. Let $(x^r,y^r)$ be a sequence generated by HiBSA, with $\gamma^r=0$, and $\beta^r=\beta>0, \; \forall~r$, satisfying \eqref{eq:case1_cond}. For a given $\epsilon>0$, let $T(\epsilon)$ denote the first
iteration index, such that  the following holds:
$$T(\epsilon) \:= \min\{r \mid \|{{\nabla} \mathcal{G}^{\beta}_{\rho}}{(x^{r+1},y^{r+1})}\| \le\epsilon, r\ge 1\}.$$
Then, 
{$T(\epsilon) = \mathcal{O}\left(\frac{1}{\epsilon^2} \right)$}}.
\end{theorem}

\subsection{Convergence analysis:  $f(x,y)$  concave in $y$}\label{sub:convergence:3}
\label{conc_case}

Next, we consider the following assumptions for \eqref{eq:problem}.

\noindent{\underline{\sf{ Assumption C-2}}}. Assume that $f(\cdot)$ in \eqref{eq:problem}  satisfies:
\begin{equation*}
f(x,y)-f(x,z) \le \langle \nabla_{y} f(x,z), y-z\rangle, \; \forall~y,z \in \mathcal{Y}, x\in \mathcal{X}.
\end{equation*}
That is, it is {\it concave} in $y$.
{Further, assume that
\begin{align}\label{eq:U:concave}
U_y(u; x,y) = f(x, u) -\frac{1}{2\rho}\|u-y\|^2.
\end{align}
That is, the $y$ update directly maximizes a regularized version of the objective function. Note that  $U_y(u; x;y)$ is strongly concave in $u$, which satisfies the counterpart of Assumption B.1 for $U_y(\cdot)$.
}\hfill $\blacksquare$

{Despite the fact that $f(x,y)$ is no longer strongly concave in $y$, the $y$-update in {\bf{[S2]}} is still relatively easy since it maximizes a strongly concave function. However, the absence of strong {concavity} of $f(x,y)$ in $y$ poses significant challenge in the analysis. In fact, from Example 1 it is clear that directly utilizing the alternating gradient type algorithm may fail to converge to any interesting solutions. Towards resolving this issue, we specialize the HiBSA algorithm, by using a novel {\it diminishing regularization} plus {\it increasing penalty} strategy to regularize the $y$ and $x$ update, respectively (by using a sequence of diminishing $\{\gamma^r\}$, and increasing $\{\beta^r\}$).

We  have the following convergence analysis.  The proofs of the results below can be found in Appendix Sec. \ref{app:lemma4} -- \ref{app:T2}.
\begin{lemma}\label{descentlemma3}
{(Descent lemma) {\it Suppose that Assumptions A, B and C-2 hold. Let $(x^r,y^r)$ be a sequence generated by HiBSA, with {$\gamma^r > 0$} and $\beta^r>L_{x_i}, \; \forall~r,i$. Then we have:}{\small
\begin{align}
&{{\ell}(x^{r+1},y^{r+1})-{\ell}(x^r,y^r)}\le\frac{1}{2\rho}\|y^r-y^{r-1}\|^2\nonumber\\
&\quad -\left({\frac{\beta^{r}}{2} +\mu }-\frac{\rho {L^2_y}}{2}\right)\|x^{r+1}-x^r\|^2-(\frac{\gamma^{r-1}}{2}-\frac{1}{\rho})\|y^{r+1}-y^r\|^2\nonumber
\\
&\quad +\frac{\gamma^r}{2}\|y^{r+1}\|^2-\frac{\gamma^{r-1}}{2}\|y^r\|^2+\frac{\gamma^{r-1}-\gamma^r}{2}\|y^{r+1}\|^2.\label{eq.descentlemma2}
\end{align}}}
\end{lemma}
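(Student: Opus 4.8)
The plan is to prove the estimate by treating the $x$-update and the $y$-update separately and then adding the two one-step changes $\ell(x^{r+1},y^r)-\ell(x^r,y^r)$ and $\ell(x^{r+1},y^{r+1})-\ell(x^{r+1},y^r)$. For the $x$-block I would reuse the argument of \leref{le.descent:x} essentially verbatim: since $\gamma^r$ enters only the $y$-step, the per-block descent $\ell(x^{r+1},y^r)-\ell(x^r,y^r)\le -\sum_i(\beta^r+\mu_i-L_{x_i}/2)\|x_i^{r+1}-x_i^r\|^2$ continues to hold with the constant $\beta$ replaced by the iteration-dependent $\beta^r$. The new ingredient is the hypothesis $\beta^r>L_{x_i}$, which gives $\beta^r-L_{x_i}/2>\beta^r/2$; combined with $\mu:=\min_i\mu_i$ this collapses the sum to $-(\beta^r/2+\mu)\|x^{r+1}-x^r\|^2$, which supplies the $\beta^r/2+\mu$ part of the $x$-coefficient in \eqref{eq.descentlemma2}. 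The residual $-\tfrac{\rho L_y^2}{2}\|x^{r+1}-x^r\|^2$ is produced later by the $y$-step.

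The crux is the $y$-block, where I would bound the increase $\Delta_y:=\ell(x^{r+1},y^{r+1})-\ell(x^{r+1},y^r)$ from above. Writing $\ell'(x^{r+1},\cdot)=f(x^{r+1},\cdot)-g(\cdot)-\mathbb I_{\mathcal Y}(\cdot)+\mathrm{const}$, which is concave in $y$ by Assumption~C-2 together with the convexity of $g$ and $\mathcal Y$, I would apply the supergradient inequality at the point $y^r$, using the specific supergradient $\nabla_y f(x^{r+1},y^r)-\xi^r$, where $\xi^r\in\partial(g+\mathbb I_{\mathcal Y})(y^r)$ is exactly the subgradient furnished by the optimality condition of the $(r{-}1)$-th $y$-update. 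This gives $\Delta_y\le\langle \nabla_y f(x^{r+1},y^r)-\xi^r,\,y^{r+1}-y^r\rangle$. The decisive move is to eliminate $\xi^r$ through that optimality condition, which for the full (non-linearized) $U_y$ of Assumption~C-2 reads $\xi^r=\nabla_y f(x^r,y^r)-\tfrac1\rho(y^r-y^{r-1})-\gamma^{r-1}y^r$; note that, unlike the strongly concave case of \leref{le.descent:y}, the gradient is here evaluated at $y^r$ rather than $y^{r-1}$.

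Substituting splits $\Delta_y$ into three inner products, each handled routinely: the term $\langle\nabla_y f(x^{r+1},y^r)-\nabla_y f(x^r,y^r),\,y^{r+1}-y^r\rangle$ is bounded via Assumption~A.3 and Young's inequality into $\tfrac{\rho L_y^2}{2}\|x^{r+1}-x^r\|^2+\tfrac1{2\rho}\|y^{r+1}-y^r\|^2$ (the Lipschitz estimate \eqref{eq:Ly} is clean because the two gradients differ only in the $x$-argument); the term $\tfrac1\rho\langle y^r-y^{r-1},\,y^{r+1}-y^r\rangle$ goes by Young's inequality into $\tfrac1{2\rho}\|y^r-y^{r-1}\|^2+\tfrac1{2\rho}\|y^{r+1}-y^r\|^2$; and the penalty term $\gamma^{r-1}\langle y^r,\,y^{r+1}-y^r\rangle$ is rewritten through the identity $\langle y^r,\,y^{r+1}-y^r\rangle=\tfrac12(\|y^{r+1}\|^2-\|y^r\|^2-\|y^{r+1}-y^r\|^2)$ into $\tfrac{\gamma^{r-1}}{2}(\|y^{r+1}\|^2-\|y^r\|^2)-\tfrac{\gamma^{r-1}}{2}\|y^{r+1}-y^r\|^2$. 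Collecting the coefficients of $\|y^{r+1}-y^r\|^2$ yields $\tfrac1\rho-\tfrac{\gamma^{r-1}}2=-(\tfrac{\gamma^{r-1}}2-\tfrac1\rho)$, the advertised sign.

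Finally I would add the $x$- and $y$-estimates and perform the purely algebraic regrouping $\tfrac{\gamma^{r-1}}2(\|y^{r+1}\|^2-\|y^r\|^2)=\tfrac{\gamma^r}2\|y^{r+1}\|^2-\tfrac{\gamma^{r-1}}2\|y^r\|^2+\tfrac{\gamma^{r-1}-\gamma^r}2\|y^{r+1}\|^2$, which anticipates the later telescoping in $r$ and reproduces \eqref{eq.descentlemma2}. I expect the main obstacle to be precisely the bookkeeping around the penalty: one must recognize that the relevant multiplier is $\gamma^{r-1}$, inherited from the previous $y$-update's optimality condition rather than the current $\gamma^r$, and that splitting it back into a $\gamma^r$ piece plus a $(\gamma^{r-1}-\gamma^r)$ remainder is exactly what allows the diminishing-$\gamma$ terms to telescope once the inequality is summed over iterations.
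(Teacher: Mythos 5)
Your proposal is correct and follows essentially the same route as the paper's proof: the same $x$/$y$ splitting with the Lemma~\ref{le.descent:x} argument giving $-(\beta^r/2+\mu)\|x^{r+1}-x^r\|^2$, the same supergradient inequality at $y^r$ with the subgradient $\xi^r$ eliminated via the previous $y$-update's optimality condition, the same three-term split handled by Assumption A.3 plus Young, Young again, and the $\gamma^{r-1}\langle y^r, y^{r+1}-y^r\rangle$ identity with the $\gamma^r$ versus $(\gamma^{r-1}-\gamma^r)$ regrouping. The only cosmetic difference is that you substitute $\xi^r$ directly, whereas the paper first introduces $\xi^{r+1}$ from the current optimality condition and then plugs in $\xi^{r+1}-\xi^r$; the resulting expressions are identical.
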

\vspace{-0.2cm}

Next we show that there exists a potential function, given below, which decreases consistently {\small
\begin{align}
&\mathcal{P}^{r+1}\hspace{-0.1cm}:= \left(\frac{1}{2\rho}+\frac{2}{\rho^2\gamma^r}+\frac{2}{\rho}\left(\frac{1}{\rho\gamma^{r+1}}
-\frac{1}{\rho\gamma^{r}}\right)\right)\|y^{r+1}-y^r\|^2\nonumber\\
&+ {\ell(x^{r+1},y^{r+1})}-\frac{\gamma^r}{2}\|y^{r+1}\|^2-\frac{2}{\rho}\left(\frac{\gamma^{r-1}}{\gamma^r}-1\right)\|y^{r+1}\|^2.\label{eq:potential:3}
\end{align}}
\begin{lemma}\label{potential3}
{\it Suppose that Assumptions A, B and C-2 are satisfied. Let $(x^r,y^r)$ be a sequence generated by HiBSA. Suppose the following conditions are satisfied for all $r$, 
{\small
\begin{equation}\label{assumpt}
\beta^{r} >  \rho L^2_y+\frac{4 L^2_y}{\rho(\gamma^r)^2}- 2\mu, \; \; \beta^{r}>L_{x_i}, \forall i, \; \; \frac{1}{\gamma^{r+1}}-\frac{1}{\gamma^r}\le {\frac{\rho}{5}},
\end{equation}}%
then the change of potential function can be bounded through {\small
\begin{align}\label{eq:whole:2}
\hspace{-0.3cm}&\mathcal{P}^{r+1}\hspace{-0.1cm}\le \mathcal{P}^{r} \hspace{-0.1cm} -\left( {\frac{\beta^r}{2} + \mu} -\left(\frac{\rho {L^2_y}}{2}+\frac{2{L^2_y}}{\rho(\gamma^r)^2}\right)\right)\hspace{-0.1cm}\|x^{r+1}-x^r\|^2
\\
&\hspace{-0.5cm}-\frac{1}{10\rho}\|y^{r+1}-y^r\|^2\hspace{-0.1cm}+\frac{\gamma^{r-1}-\gamma^r}{2}\|y^{r+1}\|^2\hspace{-0.1cm}+\frac{2}{\rho}\left(\frac{\gamma^{r-2}}{\gamma^{r-1}}-\frac{\gamma^{r-1}}{\gamma^r}\right)\|y^r\|^2\nonumber
\end{align}}}
\end{lemma}

Before proving the main result in this section, we make the following assumptions on the parameter choices.

\noindent\underline{\sf{Assumption  C-3.}} Suppose that the following conditions hold:
\begin{enumerate}
\item[(1)] The sequence $\{\gamma^r\}$  satisfies
\begin{align}
\begin{split}
&\gamma^{r}-\gamma^{r+1}\ge 0, \quad {\gamma^{r}}\rightarrow 0, \\
&\sum^{\infty}_{r=1}(\gamma^r)^2=\infty, \quad \frac{1}{\gamma^{r+1}}-\frac{1}{\gamma^r}\le \frac{ \rho}{5}.\label{eq.conditions31}
\end{split}
\end{align}
\item [(2)] The sequence $\beta^r$ satisfies

\begin{align}
\beta^{r} >  \rho L^2_y+\frac{4 L^2_y}{\rho(\gamma^r)^2}- 2\mu, \quad  \beta^{r}>L_{x_i}, \; \forall i.
\label{eq.conditions32}
\end{align}
\end{enumerate}
Note that the above assumption on $\{\gamma^r\}$ can be satisfied, for example, when $\gamma^r = \frac{1}{\rho r^{1/4}}$; see the discussion after \eqref{eq:rate_case3}. \hfill $\blacksquare$

\begin{theorem}\label{th.main3}
{\it Suppose that Assumptions A, B,  C-2 and C-3 hold. Let $(x^r,y^r)$ be a sequence generated by HiBSA. For a given $\epsilon>0$, let $T(\epsilon)$ be defined similarly as in Theorem \ref{th.main}.
Then, 
{$T(\epsilon) = \widetilde{\mathcal{O}}\left(\frac{1}{\epsilon^4} \right).$}
}
\end{theorem}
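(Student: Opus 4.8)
The plan is to follow the three-step template of \thref{th.main}---sum a descent inequality, bound the stationarity gap by successive-iterate differences, and combine them---but now contending with the fact that $\{\beta^r\}$ diverges while $\{\gamma^r\}$ vanishes. First I would sum the potential decrease \eqref{eq:whole:2} of \leref{potential3} over $r=1,\dots,T$. The terms $-(\tfrac{\beta^r}{2}+\mu-\cdots)\|x^{r+1}-x^r\|^2$ and $-\tfrac{1}{10\rho}\|y^{r+1}-y^r\|^2$ are the useful negative contributions, while the two residual terms $\tfrac{\gamma^{r-1}-\gamma^r}{2}\|y^{r+1}\|^2$ and $\tfrac{2}{\rho}(\tfrac{\gamma^{r-2}}{\gamma^{r-1}}-\tfrac{\gamma^{r-1}}{\gamma^r})\|y^r\|^2$ are not sign-definite. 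These I would control using compactness of $\mathcal{Y}$ (Assumption A.1): bounding $\|y^r\|\le D$ and exploiting the monotone telescoping structure of $\{\gamma^r\}$ in \eqref{eq.conditions31} (for the admissible choice $\gamma^r=1/(\rho r^{1/4})$ both coefficient sequences telescope to quantities bounded uniformly in $T$, the first summing to at most $\tfrac{D^2}{2}\gamma^0$). Since $\ell$ is continuous on the compact feasible set and $\tfrac{\gamma^r}{2}\|y\|^2$ is bounded, $\mathcal{P}^{r+1}$ is bounded below by some $\underline{\mathcal{P}}$; moreover \eqref{eq.conditions32} makes the $x$-coefficient a positive multiple of $\beta^r$. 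Hence the summation yields $\sum_{r=1}^{T}\big(\beta^r\|x^{r+1}-x^r\|^2+\|y^{r+1}-y^r\|^2\big)\le B$ for a constant $B$ independent of $T$.

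Next I would bound the stationarity gap exactly as in the proof of \thref{th.main}, but tracking the penalty. The $x$-block bound carries over verbatim with $\beta$ replaced by the current $\beta^r$, giving $\|(\nabla\mathcal{G})_{x}\|\le(\beta^r+L_{u}+L_{x})\|x^{r+1}-x^r\|$. For the $y$-block, the optimality condition \eqref{eq:opt:y:3} shows that $y^{r+1}=\mathrm{Py}^{1/\rho}(y^r+\rho\nabla_y f(x^{r+1},y^{r+1})-\rho\gamma^r y^{r+1})$, so nonexpansiveness of $\mathrm{Py}^{1/\rho}$ together with Assumption A.3 produces an extra regularization error $\gamma^r\|y^{r+1}\|\le \gamma^r D$. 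Squaring, the key estimate becomes, for constants $a,b,c>0$, $\|\nabla\mathcal{G}(x^r,y^r)\|^2\le a\,(\beta^r)^2\|x^{r+1}-x^r\|^2+b\,\|y^{r+1}-y^r\|^2+c\,(\gamma^r)^2$.

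The decisive step is a \emph{weighted averaging}: I divide this estimate by $\beta^r$ and sum. The first term becomes $a\,\beta^r\|x^{r+1}-x^r\|^2$, summable to $aB$ by the first step; the second obeys $b\|y^{r+1}-y^r\|^2/\beta^r\le (b/\beta^1)\|y^{r+1}-y^r\|^2$, again summable; and the third contributes $\sum_r c(\gamma^r)^2/\beta^r$. With $\gamma^r=1/(\rho r^{1/4})$, Assumption C-3 forces $\beta^r=\Theta(\sqrt r)$ and $(\gamma^r)^2=\Theta(1/\sqrt r)$, so $(\gamma^r)^2/\beta^r=\Theta(1/r)$ and this last sum is $\Theta(\log T)$---precisely where the logarithm originates. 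Thus $\sum_{r=1}^{T}\|\nabla\mathcal{G}(x^r,y^r)\|^2/\beta^r\le C_1\log T$, while $\sum_{r=1}^{T}1/\beta^r=\Theta(\sqrt T)$. Using that $\|\nabla\mathcal{G}(x^r,y^r)\|^2>\epsilon$ for every $r<T(\epsilon)$ by definition of $T(\epsilon)$, I would write $\epsilon\sum_{r<T(\epsilon)}1/\beta^r<\sum_{r<T(\epsilon)}\|\nabla\mathcal{G}(x^r,y^r)\|^2/\beta^r\le C_1\log T(\epsilon)$, and dividing by $\sum_{r<T(\epsilon)}1/\beta^r=\Theta(\sqrt{T(\epsilon)})$ delivers $\epsilon\le C\log(T(\epsilon))/\sqrt{T(\epsilon)}$.

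The main obstacle is the tension between the diverging proximal weights $\beta^r$ and the vanishing regularization $\gamma^r$: a direct summation of $\|\nabla\mathcal{G}\|^2$ fails because the $x$-gap scales like $(\beta^r)^2\|x^{r+1}-x^r\|^2$ whereas the descent controls only $\beta^r\|x^{r+1}-x^r\|^2$. Reconciling them requires the $1/\beta^r$ reweighting, which trades one factor of $\beta^r$ for the weaker $1/\sqrt T$ averaging rate and, through the residual $(\gamma^r)^2/\beta^r\sim 1/r$ term, the unavoidable $\log T$. Care is also needed to check that the conditions \eqref{eq.conditions31}--\eqref{eq.conditions32} are simultaneously compatible, so that every positive coefficient in \leref{potential3} is genuinely positive and $\beta^r,\gamma^r$ attain the claimed growth rates.
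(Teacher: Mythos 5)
Your proposal is correct and follows essentially the same route as the paper's proof: the paper also combines the potential descent of Lemma \ref{potential3} with the gap bound $\|{\nabla}\mathcal{G}^r\|^2\lesssim (\beta^r)^2\|x^{r+1}-x^r\|^2+\|y^{r+1}-y^r\|^2+(\gamma^r)^2$, and performs exactly your weighted averaging, with weights $d_2^r\propto 1/\alpha^r$ where $\alpha^r=\Theta(\beta^r)=\Theta(\sqrt{r})$, so that the $\log T(\epsilon)$ arises from $\sum_r (\gamma^r)^2/\alpha^r=\Theta(\sum_r 1/r)$ and the $\sqrt{T(\epsilon)}$ from $\sum_r 1/\alpha^r$. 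The only cosmetic difference is that the paper fixes the explicit choice $\beta^r=\rho L_y^2+\tfrac{2\kappa L_y^2}{\rho(\gamma^r)^2}-2\mu$ with $\kappa>2$ (which is what actually guarantees your claim that the $x$-coefficient is a positive multiple of $\beta^r$) and sums the per-iteration weighted inequality directly rather than first establishing your uniform bound $B$.
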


It is important to note that, when the problem is only concave in $y$, the condition \eqref{eq:U:concave} asserts that in each step a strongly concave problem has to be solved exactly. However, for a generic objective function, this step does not involve closed-form solution.
In the supplementary material accompanying this paper \cite{ieee_minmax_supp}, we extend this algorithm to the case where the maximization problem is solved by performing a finite number of gradient ascent steps.

\vspace{-0.3cm}
\subsection{Convergence analysis:  $f(x,y)$ linear in $y$}\label{sub:convergence:2}

Finally, we briefly discuss the case where the coupling term in \eqref{eq:problem} is linear in $y$. The derivation of the results in this section largely follows from what we have presented in Section \ref{sub:convergence:3}, therefore we choose to omit it.

\noindent{\underline{\sf{Assumption C-4}}}. Assume that problem \eqref{eq:problem} simplifies to:
\begin{align}\label{eq:problem:linear}
\begin{split}
\min_{{x}}\max_{{y}}&\quad y^T F(x_1,x_2,\cdots, x_K) + \sum_{i=1}^{K}h_i(x_i) - g(y)\\
\mbox{s.t.} & \quad x_i\in \mathcal{X}_i, \; y\in \mathcal{Y}, \; i=1,\cdots, K
\end{split}
\end{align}
where $F(\cdot):\mathbb{R}^{NK}\to \mathbb{R}^M$ is a vector function.  Further assume that \eqref{eq:U:concave} holds for $U_y(\cdot)$ \hfill $\blacksquare$

Note that \eqref{eq:problem:linear} contains the robust learning problem \eqref{eq:robust}, the min utility maximization problem \eqref{eq:max-min-fair}, and Example 1 as special cases. It is worth noting that, due to the use of the strongly concave approximation function $U_y(u; x,y) $ as defined in \eqref{eq:U:concave}, we are able to perform a simple gradient step to update $y$, while in the algorithm proposed in the previous section, each iteration has to solve an optimization problem involving $y$.

{It is worth mentioning that, in this case the analysis steps are similar to those in Sec. \ref{conc_case}. In particular, we can show that the potential function \eqref{eq:potential:3} has the same behavior as in Lemma \ref{potential3}. Therefore, we state our convergence result in the following corollary.}

{\begin{corollary}\label{th.main2}
{\it Suppose that Assumptions A, B, C-3 and  C-4 hold. Let $(x^r,y^r)$ be a sequence generated by HiBSA. For a given $\epsilon>0$, let $T(\epsilon)$ be defined  as in Theorem \ref{th.main}.
{Then, $T(\epsilon) = \widetilde{\mathcal{O}}\left(\frac{1}{\epsilon^4} \right).$}
}
\end{corollary}
}


\section{Numerical Results}

\label{sec:majhead}
We test our algorithms on three applications: a robust learning problem, a rate maximization problem in the presence of a jammer and {a coordinated beamforming problem}.

{\noindent{\bf Robust learning over multiple domains.} Consider a scenario where we have datasets from two different domains and adopt a neural network  model in order to solve a multi-class classification problem. The neural network consists of two hidden layers with { 50} neurons, each endowed with { sigmoid} activations, except from the output layer where we adopt the softmax activation. We aim to learn the model parameters using the following two approaches:\\

\vspace{-0.2cm}
\noindent [1] \underline{Robust Learning} : Apply the robust learning model (\ref{eq:robust}) and optimize the cost function using the HiBSA algorithm with  $\gamma^{r} = \frac{1}{r^{1/4}}$ { and the Multi-step GDA algorithm \cite{nouiehed2019solving} with one gradient descent and five gradient ascent steps per iteration}. Note that we treat the minimization variable as one block and use the first-order Taylor expansion of the cost function as the approximation function.
\\

\vspace{-0.2cm}
\noindent [2] \underline{Mutltitask Learning} : Apply a multitask learning model \cite{DBLP:journals/corr/ZhangY17aa}, where  we optimize the sum of the respective empirical risks correspsonding to the two domains/tasks; the weights associated with each task are fixed to 1/2. The problem is optimized using gradient descent.\\

\vspace{-0.2cm}
Moreover, we evaluate the above algorithms by using the {minimum accuracy} across the two domains, over both training and test datasets. That is, accuracy = $\min\lbrace${accuracy} on domain 1, {accuracy} on domain 2$\rbrace$. 
}

{In our experiments we use the MNIST \cite{lecun1998gradient} dataset whose data points are images of handwritten digits of dimensions $28 \times 28$. We select two different parts of the MNIST dataset as the two different domains we mentioned above. The first part consists of the digits from 0 to 4, while the second one contains the rest. Moreover, for the first domain we use $5000$ images for training and $1000$ for testing, while in the second one we employ $25000$ and $5000$ images respectively.  Finally, we average the results over $5$ iterations.

Note that we do not perform  extensive parameter tuning, since the purpose of this experiment is not to support the superiority of the robust model, but merely to illustrate that the proposed HiBSA computes a reasonable model similar to what can be computed by multistep GDA, and to what can be obtained by multi-task learning. 
Indeed, the results presented in Fig. \ref{fig:mnist} support this view, since different approaches achieve approximately the same accuracy on the test set.

}

\begin{figure}
		\vspace{-0.5cm}
\begin{center}
\begin{minipage}[t]{0.35\textwidth}
\includegraphics[width=\textwidth]{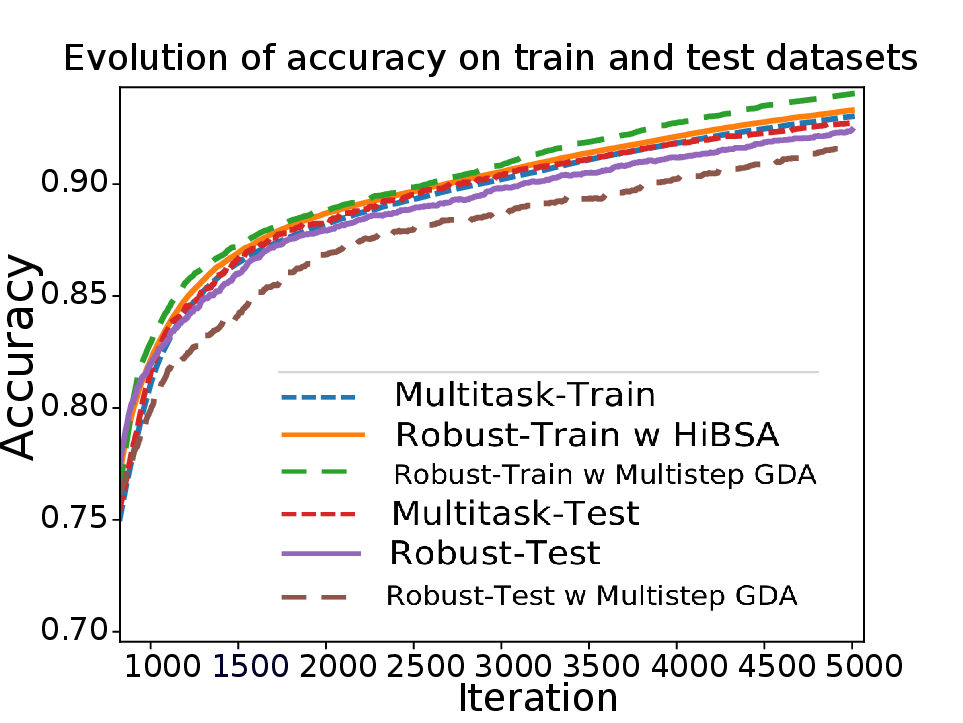}
\end{minipage}
\begin{minipage}[t]{0.35\textwidth}
\includegraphics[width=\textwidth]{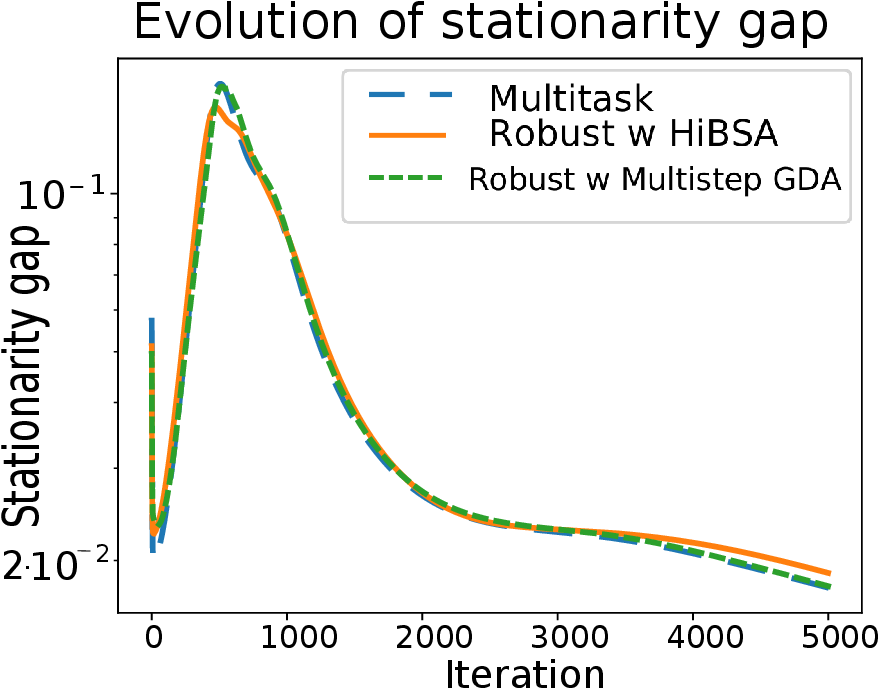}
\end{minipage}
\end{center}
	\vspace{-0.5cm}
\caption{\footnotesize {The results on the experiments performed on the MNIST dataset \cite{lecun1998gradient}. The top figure depicts training and testing accuracies, while the second one  depicts the convergence behavior of the two algorithms.}}
\vspace{-0.3cm}
\label{fig:mnist}

\end{figure}
\noindent{\bf Power control in the presence of a jammer.} Consider the multi-channel and multi-user  formulation \eqref{eq:jammer} where there are $N$ channels, $K$ collaborative users and one jammer. We can verify that the jammer problem (i.e., the maximization problem over $y$) has a strongly concave objective function over the feasible set.

We compare HiBSA with the classic interference pricing algorithm \cite{Schmidt13compare,Shi:2009}, and the WMMSE algorithm \cite{shi11WMMSE_TSP}, which are designed for solving sum-rate optimization problem {\it without} the jammer. Our problem is tested using the following setting. We construct a network with $K=10$, and the interference channel among the users and the jammer is generated using  uncorrelated fading channel model with channel coefficients generated from the complex zero-mean Gaussian distribution with unit covariance \cite{shi11WMMSE_TSP}. All  users' power budget is fixed at $P=10^{\rm SNR/10}$. For test cases without a jammer, we set $\sigma^2_k=1$ for all $k$. For test cases with a jammer, we set $\sigma^2_k=1/2$ for all $k$, and let the jammer have the rest of the noise power, i.e., $p_{0,\max}= N/2$. Note that by splitting the noise power we intend to achieve some  fair comparison between the cases with and without the jammer. However, it is not possible to be completely fair because even though the total noise budgets are the same, the noise power transmitted by the jammer has to go through the random channel, so the total received noise power could  be different. Nevertheless, this setting is sufficient to demonstrate the behavior of the HiBSA algorithm.

From the Fig. \ref{fig:jamming} (top),   it is clear that the pricing algorithm monotonically increases the sum rate (as is predicted by theory), while HiBSA behaves differently: after some initial oscillation, the algorithm converges to a value that has a lower sum-rate. Further in Fig. \ref{fig:jamming} (bottom), we do see that by using the proposed algorithm, the jammer is able to effectively reduce the total sum rate of the system.
\begin{figure}[t]
\begin{center}
		\vspace{-1cm}
\begin{minipage}[t]{0.35\textwidth}
\includegraphics[width=\textwidth]{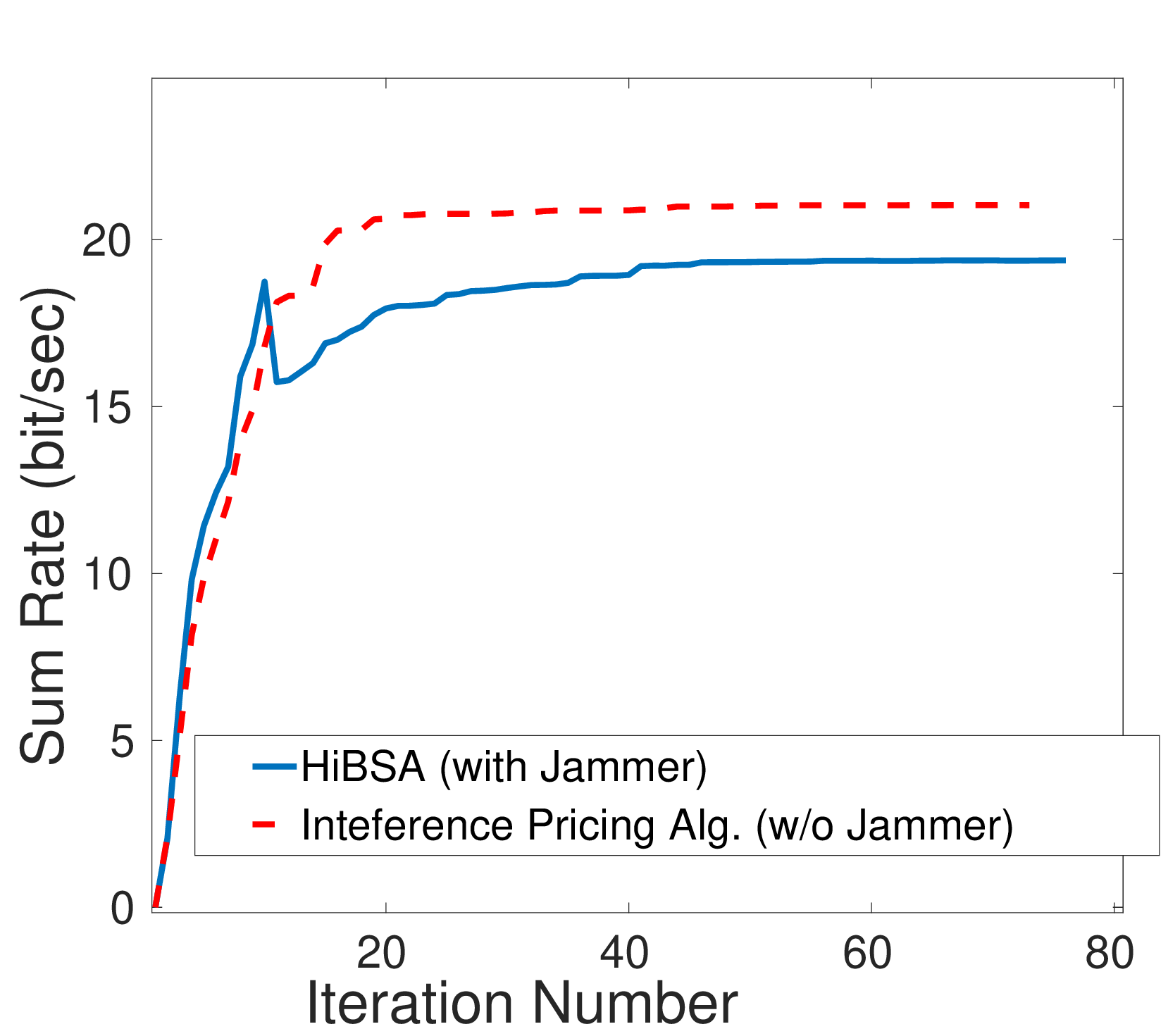}
\end{minipage}
\begin{minipage}[t]{0.35\textwidth}
\includegraphics[width=\textwidth]{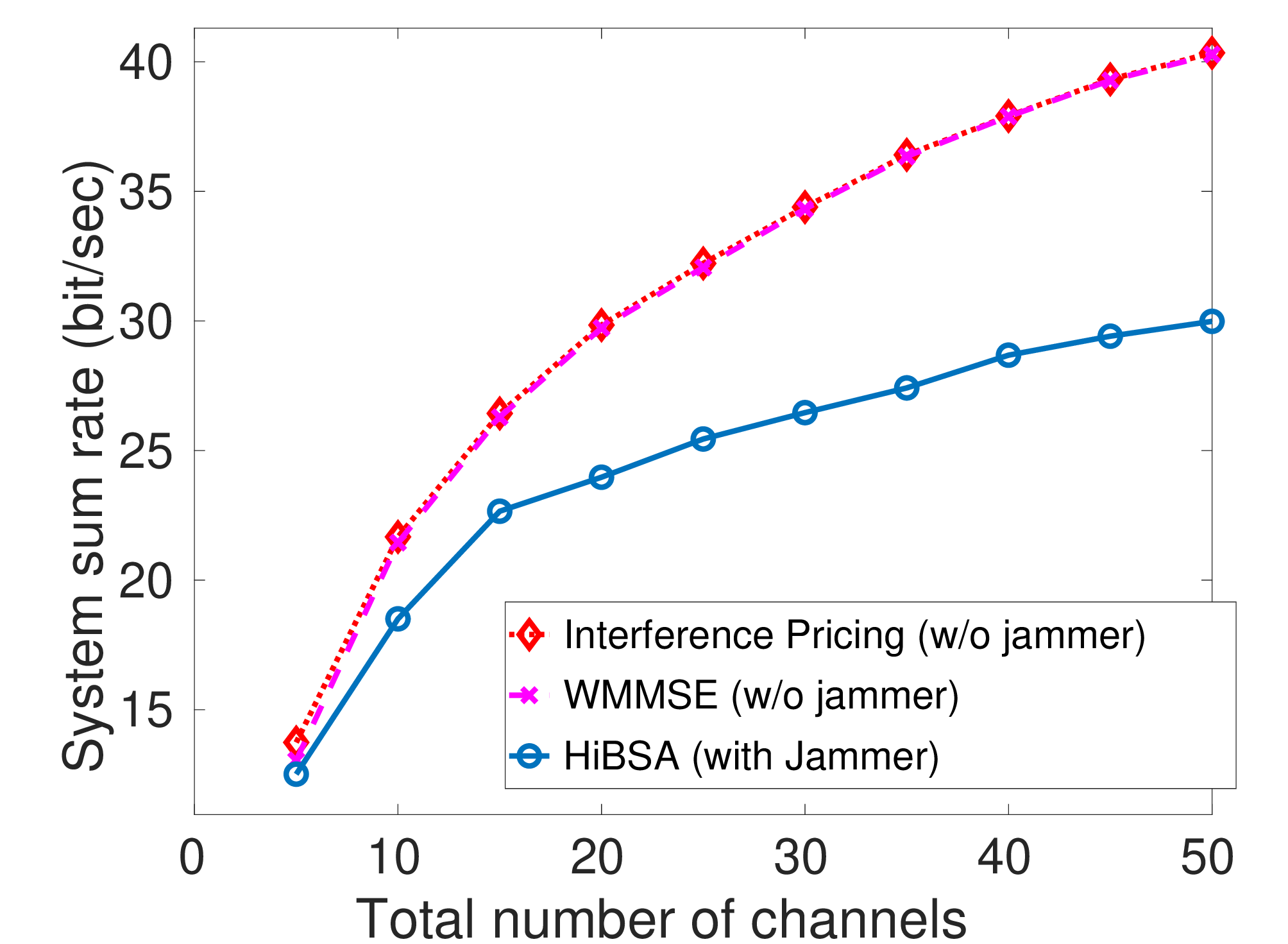}
\end{minipage}
\end{center}
\vspace{-0.4cm}
\caption{\footnotesize The convergence curves and total averaged system performance comparing three algorithms: WMMSE, Interference Pricing and HiBSA. The first figure shows a single realization of the algorithms, and in the second figure, each point represents an average of $50$ realizations. The total number of user is $10$, and ${\rm SNR}=1$.} 
\label{fig:jamming}
\vspace{-0.5cm}
\end{figure}

{\noindent{\bf Coordinated MISO beamforming design.}
Consider the coordinated beamforming design problem \cite{cobf} described in Sec. \ref{sec:intro} over a MISO interference channel. In this problem we experiment with the scenario where there are $K=10$ transmitter-receiver pairs, each transmitter is equipped with  $N=6$ antennas. We adopt the min-rate utility, i.e., $U(\{R_{i}(x)\}_{i=1}^{K}) = \min_{i=1, \ldots, K} \{R_{i}(x)\}$. Moreover, the transmission is performed over a complex Gaussian channel, and we set the power budget to be $\bar{p}=1$. The channel covariance matrices $\{C_{ij}\}, i,j=1,\ldots,10$ are generated at random and their maximum eigenvalues are normalized to 1, if $i=j$, and to some constant $\lambda>0$, if $i \neq j$. Thus, the parameter $\lambda$ quantifies the level of intereference.

The problem of interest is to design the users' beamformers in order to maximize the system's utility function under constraints in power and outage probability. We approach the solution of the problem using two different algorithms :

\noindent[1]\underline{BSUM-LSE \cite{cobf}}:
Substitute the min-rate utility function with  a popular log-sum-exp approximation, i.e., 
$$\min\limits_{i=1,\ldots,K} \{R_i(x)\}:= r_{\min}(x) \approx \frac{1}{\nu}\log_2 \left(\sum\limits_{i=1}^{K} 2^{-\nu R_i} \right).$$ Note that $\nu$ specifies the level of approximation with higher $\nu$'s corresponding to tighter bounds for the approximation error. Then following what is suggested in \cite[Section C]{cobf}, we formulate the respective problem using the surrogate function, and solve the resulting problem iteratively using the projected gradient descent.

\noindent[2]\underline{HiBSA}: We apply the HiBSA to solve the formulation in \eqref{eq:max-min-fair}. The $x$-subproblem is solved similarly as in BSUM-LSE. Moreover, in the maximization problem we use  $\gamma^{r}=1/r^{1/4}$.
	\begin{figure}
		\begin{center}
			\begin{minipage}[t]{0.35\textwidth}
				\centering
				\includegraphics[width=\textwidth]{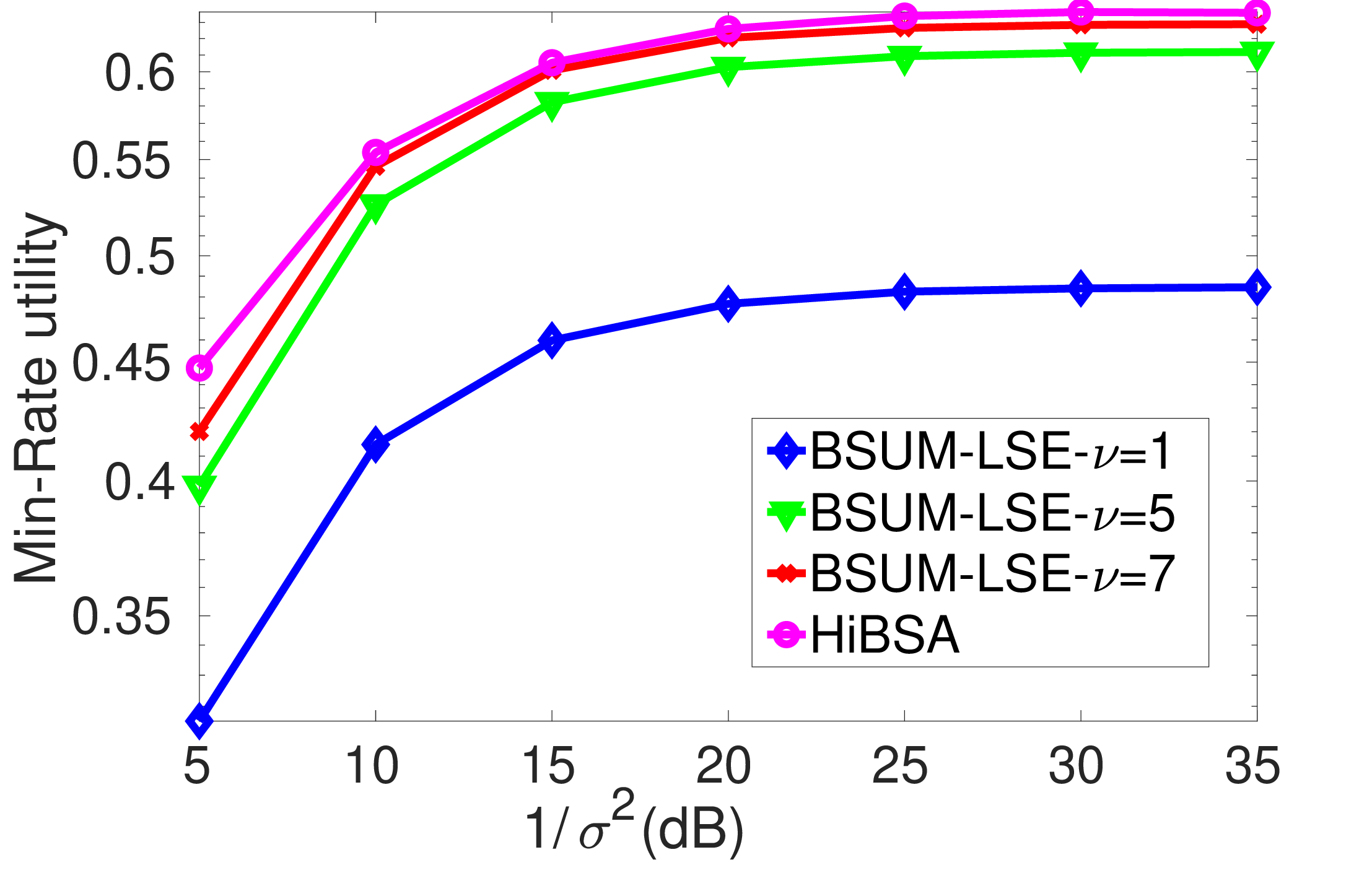}
			\end{minipage}
			\begin{minipage}[t]{0.35\textwidth}
				\centering
				\includegraphics[width=\textwidth]{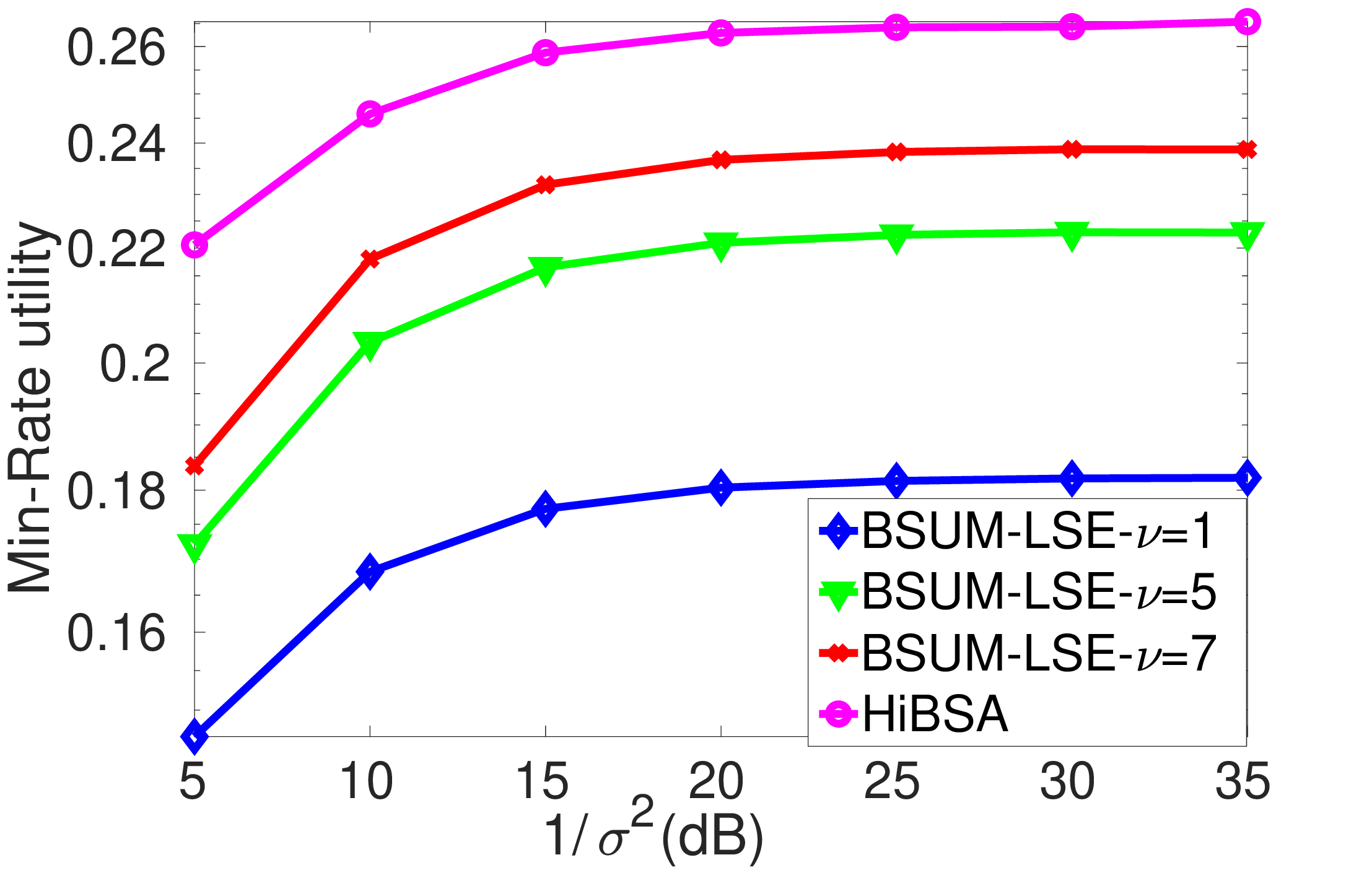}
			\end{minipage}
			\caption{\footnotesize {The min-rate utility achieved using the HiBSA and the BSUM-LSE algorithm \cite{cobf} for two different interference levels {in a scenario where we have $K=10$ users equipped with $N=6$ antennas}. Note that the top figure corresponds to a lower interference level ($\lambda = 0.6$) than the bottom one ($\lambda = 1$). For each interference level we experiment with 3 different values of the level of approximation ($\nu$). Finally, $\sigma^2$ is the noise power at the receivers.}}
			\label{fig:beam}
		\end{center}
			\vspace{-0.8cm}
	\end{figure}}
\noindent We run both algorithms for $1000$ complete iterations  (one complete iteration involves one update of {\it all the block variables}, and $1000$ iterations are sufficient for both algorithms to converge in all scenarios), set the stepsizes {$\beta$ and $\rho$} {of HiBSA and the respective {stepsize} of BSUM-LSE} all equal to $10^{-2}$. 
We also average the final results over $10$ independent random problem instances. Moreover, in order to evaluate the effect of the log-sum approximation we show the achieved min-rate utility of BSUM-LSE, by using  3 different values of $\nu\in\{1, 5, 7\}$.

 In Fig. \ref{fig:beam} we plot the min-rate utility for  7 different values of the noise variance and 2 different levels of interference. Notice that the HiBSA algorithm achieves higher utility than BSUM-LSE, while as expected the larger the value of $\nu$ the higher the utility achieved by the latter algorithm.

{Furthermore, since large values of the parameter $\nu$ lead to low approximation error bounds, it is of interest to consider experiments with large $\nu$ for the BSUM-LSE algorithm. Intuitively, we expect the resulting objective to be very close to the min-rate utility and thus the achieved min-rate of the BSUM-LSE algorithm should approach the respective min-rate of HiBSA. In order to determine the behavior of BSUM-LSE in that range of $\nu$'s we consider an experiment with $K=10, N=6$ and $\lambda=0.6$. Regarding the stepsizes we keep them constant across the different values of $1/\sigma^2$, however an effort was made to select the optimal ones for all algorithms in order to ensure fair comparisons. Moreover, we terminate both algorithms when the relative successive differences of the min-rate utility becomes small, i.e., $|r_{min}(x^{r+1})-r_{min}(x^r)|/|r_{min}(x^r)|\le 10^{-7}$, or the number of iterations becomes larger than $5,000$. Finally, the results are provided in Fig. \ref{fig:beam2}. 

Note that, for large value of  $\nu$, i.e. $\nu=50, 100, 1000$,  the achieved rate of BSUM-LSE is close but still inferior to that of HiBSA. Additionally, for the same $\nu$'s the HiBSA is faster than BSUM-LSE; in fact the larger the $\nu$ the longer the runtime. On the other hand, the former algorithm is in general slower than BSUM-LSE with $\nu=5$, however in that case HiBSA achieves higher min-rate utility. Overall, note that even though large  $\nu$ leads (in most cases), to improvements in the attained min-rate utility, it also incurs longer runtimes. This can be attributed to the fact that for high $\nu$ the log-sum-exp objective approaches a non-smooth function, which is difficult to optimize. In conclusion, HiBSA in general outperforms BSUM-LSE in terms of runtime and attained min-rate utility.
}
\begin{figure}
		\begin{center}
			\begin{minipage}[t]{0.35\textwidth}
				\centering
				\includegraphics[width=\textwidth]{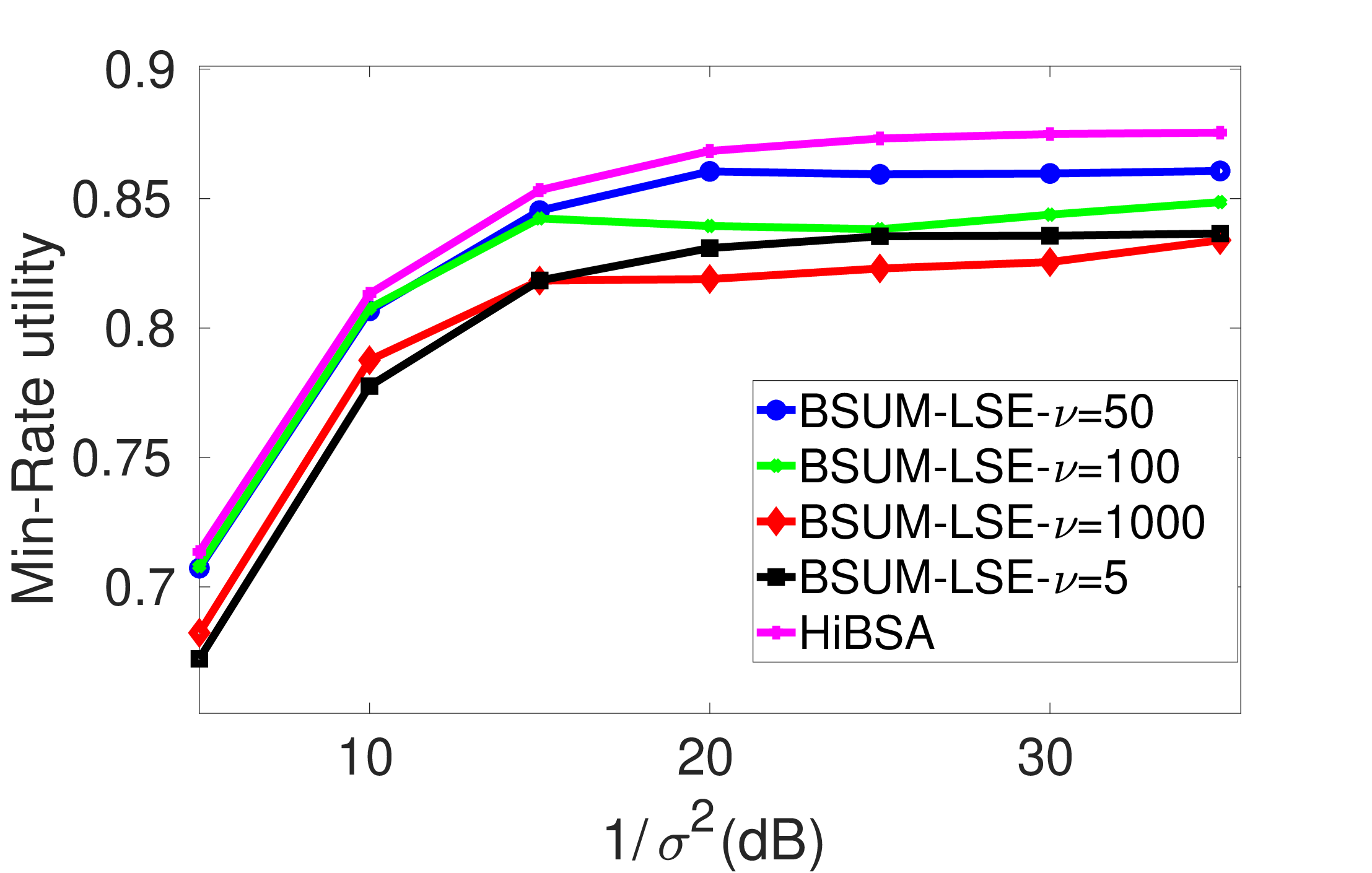}
			\end{minipage}
			\begin{minipage}[t]{0.35\textwidth}
				\centering
				\includegraphics[width=\textwidth]{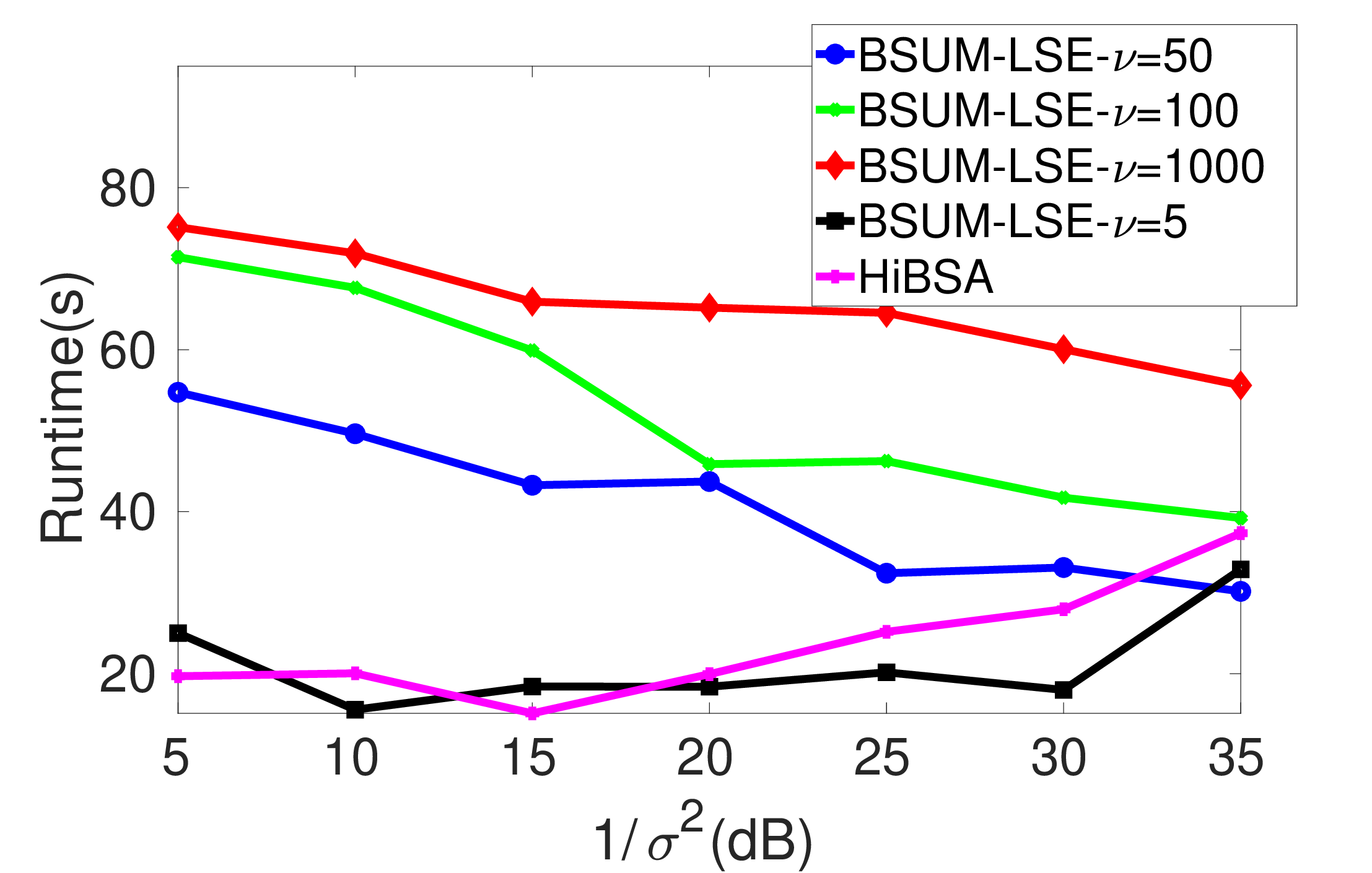}
			\end{minipage}
			\caption{\footnotesize {The achieved min-rate utility (top) and runtime (bottom) for the BSUM-LSE algorithm, with $\nu=5, 50, 100, 1000$, and the HiBSA algorithm. The results are averaged over 30 independent runs.}}
			\label{fig:beam2}
		\end{center}
			\vspace{-0.8cm}
	\end{figure}}

\section{Conclusions}
\label{ssec:subhead}

In this paper, motivated by the min-max problems arising
in the areas of signal processing and wireless communications,
we propose an algorithm called HiBSA. By
leveraging the (strong) concavity of the maximization problem,
we conduct analysis on the convergence behavior of the
proposed algorithm. Numerical results show the effectiveness
of the proposed algorithms of solving the min-max problems
in robust machine learning and wireless communications.  There are many potential future research directions we plan to explore. For example, it will be interesting to develop algorithms for more challenging problems where the $y$ problem is also non-convex. Further, it will also be interesting to establish some lower complexity bounds for non-convex and/or non-concave min-max problems, which characterizes the {\it best} performance one can achieve when optimizing such a family of problems. 

\section{Acknowledgement}
The authors would like to thank Dr. Tsung-Hui Chang and Dr. Wei-Chiang Li for helpful discussion on the MISO beamforming problem, and for providing their codes.

\vspace{-2mm}

\bibliographystyle{IEEEbib}
\bibliography{ref,refs,distributed_opt,ref_tiancong,Surbib}
{ \appendix \label{app}

\subsection{Proof of Lemma \ref{le.descent:x}}\label{app:Lemma1}
{By using the assumption that $f$ has Lipschitz gradient, $h_i$ is convex (cf. Assumption A), and by noticing that $w^{r+1}_i=(x^r_i, w^{r+1}_{-i})$, we obtain the following:
\begin{align}\label{eq:desc_x}
&{\ell}(x_{i}^{r+1}, w_{-i}^{r+1}, y^{r}) -
{\ell}(x_{i}^{r}, w_{-i}^{r+1}, y^{r}) \\
&\leq \langle \nabla_{x_{i}} f(w_{i}^{r+1}, y^{r}) + \vartheta^{r+1}_i, x_{i}^{r+1} - x_{i}^{r} \rangle + \frac{L_{x_{i}}}{2} \| x_{i}^{r+1} - x_{i}^{r} \|^{2}\nonumber
\end{align}
for some $\vartheta^{r+1}_i\in \partial h_i(x^{r+1}_i)$.

Second,  the optimality condition for {the} $x_i$ update step  \eqref{eq:x-update} is
\begin{align}\label{eq:opt_xi}
&\langle \nabla_{x_{i}} U_{i}(x_{i}^{r+1}; w_{i}^{r+1}, y^{r})  \nonumber \\
& \quad + \vartheta_i^{r+1} + \beta(x_{i}^{r+1} - x_{i}^{r}), x_{i}^{r} - x_{i}^{r+1} \rangle \geq 0.
\end{align}
So adding and subtracting  $\langle \nabla_{x_{i}} U_{i}(x_{i}^{r}; w_{i}^{r+1}, y^{r}), x_{i}^{r} - x_{i}^{r+1}\rangle $ in \eqref{eq:opt_xi}, and by applying assumptions B.1 (strong convexity) and B.2 (gradient consistency), we obtain the following:
\begin{align*}
&\langle \nabla_{x_{i}} f(w_{i}^{r+1}, y^{r}), x_{i}^{r+1} - x_{i}^{r} \rangle + \langle \vartheta^{r+1}_i, x_{i}^{r+1} - x_{i}^{r} \rangle \nonumber\\
&\leq
-\mu_{i} \| x_{i}^{r+1} - x_{i}^{r} \|^{2} - \beta  \|x_{i}^{r+1} - x_{i}^{r}\|^{2}.
\end{align*}
Then, combining the above expression with \eqref{eq:desc_x} results in 
\begin{align*}
&\hspace{-0.1cm}{\ell}(x_{i}^{r+1}, w_{-i}^{r+1}, y^{r}) - {\ell}(x_{i}^{r}, w_{-i}^{r+1}, y^{r}) \nonumber\\
&\leq \hspace{-0.1cm} \big(\hspace{-0.1cm} -\beta -\mu_{i} + \frac{L_{x_{i}}}{2}\big) \hspace{-0.05cm} \| x_{i}^{r+1} - x_{i}^{r} \|^{2}.
\end{align*}
Summing over $i\in [K]$ we obtain the desired result.  \QED

\subsection{Proof of Lemma \ref{le.descent:y}}\label{app:Lemma2}
For notational simplicity, let us define
$$\ell'(x^{r+1},y)=f(x^{r+1},y)+ {\sum\limits_{i=1}^{K} h_i(x_i^{r+1})}-\mathcal{I}_{\mathcal{Y}}(y) -g(y).$$
Notice that for any $y\in\mathcal{Y}$, we have {$\ell'(x^{r+1},y) = \ell(x^{r+1},y)$}.
The optimality condition of the $y$-step \eqref{eq:y-update} becomes
\begin{align}\label{eq:opt:y}
0&=\nabla_y f(x^{r+1},y^r)-\frac{1}{\rho}(y^{r+1}-y^r)- \xi^{r+1},
\end{align}
where $\xi^{r+1}\in \partial (\mathcal{I}_{\mathcal{Y}}(y^{r+1})+ g(y^{r+1}))$ is a subgradient vector.
Since $\ell'(x,y)$ is {concave} with respect to $y$, we have{\small
\begin{align}
\notag
&\ell'(x^{r+1},y^{r+1})-\ell'(x^{r+1},y^r)\le  \langle\nabla_y f(x^{r+1},y^r) -\xi^r,y^{r+1}-y^r\rangle
\\\notag
\stackrel{\eqref{eq:opt:y}}=&  \frac{1}{\rho}\|y^{r+1}-y^r\|^2 {+\langle\xi^{r+1}-\xi^{r}},y^{r+1}-y^r\rangle
\\\notag
\stackrel{(a)}= &\frac{1}{\rho} \|y^{r+1}-y^r\|^2+\langle\nabla_y f(x^{r+1},y^{r})-\nabla_y f(x^r,y^{r-1}),y^{r+1}-y^r\rangle
\\
&\quad-\frac{1}{\rho}\langle y^{r+1}-y^r-(y^r-y^{r-1}),y^{r+1}-y^r\rangle \nonumber
\\\notag
\mathop{=}\limits^{(b)}&\frac{1}{\rho}\|y^{r+1}-y^r\|^2+\langle\nabla_y f(x^{r+1},y^r)-\nabla_y f(x^r,y^{r-1}),y^{r+1}-y^r\rangle
\\\notag
&
+\frac{1}{2\rho}\|y^r-y^{r-1}\|^2-\frac{1}{2\rho}\|y^{r+1}-y^r\|^2-\frac{1}{2\rho}\|v^{r+1}\|^2
\\\notag
\mathop{\le}\limits^{(c)}&\frac{1}{\rho}\|y^{r+1}-y^r\|^2+\frac{\rho {L^2_y}}{2}\|x^{r+1}-x^r\|^2+\frac{1}{2\rho}\|y^r-y^{r-1}\|^2
\\
&-\frac{1}{2\rho}\|v^{r+1}\|^2 +\langle\nabla_y f(x^r,y^r)-\nabla_y f(x^r,y^{r-1}),y^{r+1}-y^r\rangle\label{eq.end}
\end{align}}%
where $(a)$  follows from the optimality conditions of the $y$-step \eqref{eq:y-update} at iterations $r+1$ and $r$; in $(b)$ we apply the following identity: {\small
\begin{equation}
\langle v^{r+1}, y^{r+1}-y^r\rangle=\frac{1}{2}\left(\|y^{r+1}-y^r\|^2+\|v^{r+1}\|^2-\|y^r-y^{r-1}\|^2\right)\label{eq.eqrel},
\end{equation}}%
where we have defined 
\begin{align}\label{eq:v}
v^{r+1}:=y^{r+1}-y^r-(y^r-y^{r-1});
\end{align}
in $(c)$ we add and subtract a term $\langle \nabla_y f(x^r,y^r), y^{r+1}-y^r\rangle$, and apply the Young's inequality and obtain:
{\begin{align}
&\langle\nabla_y f(x^{r+1},y^r)-\nabla_y f(x^r,y^r),y^{r+1}-y^r\rangle\nonumber\\
&\le\frac{\rho {L^2_y}}{2}\|x^{r+1}-x^r\|^2+\frac{1}{2\rho}\|y^{r+1}-y^r\|^2,
\label{eq.lip}
\end{align}}%
{where ${L_y}$ is defined in \eqref{eq:Ly}.}
By applying the strong concavity of $f(x,y)$ in $y$, the Young's inequality {and the Lipschitz condition w.r.t $y$}, we can have the following bound for the inner product term in \eqref{eq.end}:
\begin{align}
&\langle\nabla_y f(x^r,y^r)-\nabla_y f(x^r,y^{r-1}),y^{r+1}-y^r\rangle\nonumber\\
&\le\langle\nabla_y f(x^r,y^r)-\nabla_y f(x^r,y^{r-1}),v^{r+1}+y^r-y^{r-1}\rangle
\nonumber \\
&\le\frac{\rho L^2_y}{2}\|y^r-y^{r-1}\|^2+\frac{1}{2\rho}\|v^{r+1}\|^2-\theta\|y^r-y^{r-1}\|^2. \label{eq.lip1}
\end{align}
Combining the above with \eqref{eq.end} completes the proof.  \QED

At this point, by simply combining Lemmas \ref{le.descent:x} - \ref{le.descent:y}, it is not clear how the objective value behaves after each $x$ and $y$ update.
To capture the essential
dynamics of the algorithm, the key is to identify a proper potential function, which decreases after each round of $x$ and $y$ updates.

\subsection{Proof of Lemma \ref{le.condalg1}}\label{app:Lemma3}
According to \eqref{eq:opt:y}, the optimality condition of $y$-problem \eqref{eq:y-update} at iterations $r+1$ and $r$ are given by:
\begin{align}
&- \nabla_y f(x^{r+1},y^r) +\xi^{r+1} + \frac{1}{\rho}(y^{r+1}-y^r)=0,
\\
&- \nabla_y f(x^{r},y^{r-1}) +\xi^{r} + \frac{1}{\rho}(y^{r}-y^{r-1})=0,
\end{align}
{where $\xi^r\in \partial(\mathcal{I}_{\mathcal Y}(y^r)+ g(y^r))$.}
We subtract these two  equalities, multiply both sides by $y^{r+1}-y^r$, utilize the defining property of subgradient vectors:
$
\langle \xi^{r+1}- \xi^{r}, y^{r+1} -y^r\rangle\ge 0,
$
and we obtain:
\begin{align*}
& \frac{1}{\rho}\langle v^{r+1},y^{r+1}-y^r\rangle\nonumber\\
& \le\langle\nabla_y f(x^{r+1},y^r)-\nabla_y f(x^r,y^r), y^{r+1}-y^r\rangle
\\
& \quad +\langle\nabla_y f(x^r,y^r)-\nabla_y f(x^r,y^{r-1}),y^{r+1}-y^r\rangle,
\end{align*}%
where $v^{r+1}$ is defined in \eqref{eq:v}.
{Applying  \eqref{eq.eqrel} to the LHS to the above expression, and using similar techniques as in   \eqref{eq.lip}, \eqref{eq.lip1} for the RHS of the above expression [note that this time we use a constant $\theta$ instead of $\rho$, when applying \eqref{eq.lip}], we obtain the following:
	{\small
\begin{align}\label{eq:rec}
&\frac{1}{2\rho}\|y^{r+1}-y^r\|^2 \le\hspace{-0.05cm}\frac{1}{2\rho}\hspace{-0.05cm}\|y^r-y^{r-1}\|^2\hspace{-0.1cm}-\hspace{-0.05cm} \frac{1}{2\rho}\|v^{r+1}\|^2\hspace{-0.1cm} +\hspace{-0.05cm}\frac{{L^2_y}}{2\theta}\|x^{r+1}-x^r\|^2\hspace{-0.1cm}\nonumber
\\
&+\frac{\theta}{2}\|y^{r+1}-y^{r}\|^2 +\frac{\rho L^2_y}{2}\|y^r-y^{r-1}\|^2+\frac{1}{2\rho}\|v^{r+1}\|^2-\theta\|y^r-y^{r-1}\|^2\nonumber
\\
&=\frac{1}{2\rho}\|y^{r}-y^{r-1}\|^2 \hspace{-0.1cm}+\frac{{L^2_y}}{2\theta}\|x^{r+1}-x^r\|^2\hspace{-0.1cm}+\frac{\theta}{2}\|y^{r+1}-y^r\|^2\hspace{-0.1cm} \nonumber  \\
&-\left(\theta-\frac{\rho L^2_y}{2}\right)\|y^r-y^{r-1}\|^2.
\end{align}}%

By combining Lemmas \ref{le.descent:x} and \ref{le.descent:y} we obtain
\begin{align*}
&\ell(x^{r+1},y^{r+1})-\ell(x^{r},y^r)\nonumber\\
& \le - \sum_{i=1}^{K}\left(\beta + \mu_i- \frac{L_{x_i}}{2} - \frac{\rho {L^2_y}}{2}\right)\|x_i^{r+1}-x_i^r\|^2\\
&+\frac{1}{\rho}\|y^{r+1}-y^r\|^2 -\left({\theta}-(\frac{1}{2\rho}+\frac{\rho L^2_y}{2})\right)\|y^r-y^{r-1}\|^2.
\end{align*}

Multiplying both sides of \eqref{eq:rec} by  $4/(\theta \rho)$, and adding the resulting inequality to the above expression, we have

\begin{align*}
&\ell(x^{r+1},y^{r+1}) + \frac{2}{\rho^2 \theta } \|y^{r+1}-y^{r}\|^2 \\
& \le \ell(x^{r},y^r) + \frac{2}{\rho^2 \theta } \|y^{r}-y^{r-1}\|^2 + \frac{3}{\rho}\|y^{r+1} - y^{r}\|^2\\
&  \quad - \sum_{i=1}^{K}\left(\beta + \mu_i- \frac{L_{x_i}}{2} -\left(\frac{2L^2_y}{\theta^2\rho}+\frac{\rho L^2_y}{2}\right) \right)\|x_i^{r+1}-x_i^r\|^2\\
& \quad -\left({\theta}-(\frac{1}{2\rho}+\frac{\rho L^2_y}{2})\right)\|y^r-y^{r-1}\|^2 \\
& \quad -\frac{4}{\theta\rho}\left(\theta-\frac{\rho L^2_y}{2}\right)\|y^r-y^{r-1}\|^2.
\end{align*}

Finally, adding in both sides the term $\left( {\frac{1}{2\rho}}-4\left(\frac{1}{\rho}-\frac{L^2_y}{2\theta}\right)\right)\|y^{r+1}-y^r\|^2$ and using  the definition of the potential function \eqref{potential_111}, we obtain the following

\begin{align*}
\notag
&\mathcal{P}^{r+1}
\le \mathcal{P}^{r}
+\left(\frac{3}{\rho}+ {\frac{1}{2\rho}}-4\left(\frac{1}{\rho}-\frac{L^2_y}{2\theta}\right)\right)\|y^{r+1}-y^r\|^2 \\
&- \sum_{i=1}^{K}\left(\beta + \mu_i- \frac{L_{x_i}}{2} -\left(\frac{2L^2_y}{\theta^2\rho}+\frac{\rho L^2_y}{2}\right) \right)\|x_i^{r+1}-x_i^r\|^2\nonumber.
\end{align*}

In the inequality above we do not include $\theta-\rho L^2_y/2$ (from RHS of the descent estimate in Lemma \ref{le.descent:x}) because by the choice of $\rho$ this term is positive.
Therefore, when
\begin{equation}
\rho<\frac{\theta}{4L^2_y},\quad \beta >   L^2_y\left(\frac{2}{\theta^2\rho}+ \frac{\rho}{2}\right)  + \frac{L_{x_{i}}}{2} - \mu_{i}, \; \forall i
\end{equation}
we have sufficient descent of the potential function $\mathcal{P}^{r+1}$.
This completes the proof. \QED

\subsection{Proof of Theorem \ref{th.main}}\label{app:T1}
We first bound the $i$th block of the optimality gap \eqref{eq.optgap} by
{\small
\begin{align*}
\notag
&\| ({\nabla} {\mathcal{G}^{\beta}_{\rho}}(x^r,y^r))_{i}\|
\\\notag
\le&  \beta\|x_{i}^{r+1}-x_{i}^r\|+\beta\|x_{i}^{r+1}-{\mbox{\rm Px}^{\beta}_{i}}(x_{i}^r-1/\beta\nabla_{x_i} f(x^r,y^r))\|
\\\notag
\mathop{\le}\limits^{(a)}&\beta\|x_{i}^{r+1}-x_{i}^r\|+\beta\|{\mbox{\rm Px}^{\beta}_{i}}({ x_i^{r}}-(\frac{1}{\beta}\nabla_{x_{i}} U_i(x_{i}^{r+1};w_{i}^{r+1},y^r))) \nonumber\\
&\quad
-{\mbox{\rm Px}^{\beta}_{i}}(x_i^r-\frac{1}{\beta}\nabla_{x_{i}} f(x^r,y^r))\|
\\\notag
\mathop{\le}\limits^{(b)}& \beta\|x_{i}^{r+1}-x_{i}^r\|+ {L_{u_i}}\|x_{i}^{r+1}-x_{i}^r\| + {L_{x_i}} \| w_{i}^{r+1} - x^{r} \| \\
\le & \left(\beta + {L_{u_i} + L_{x_i}} \right)\|x^{r+1}-x^r\|,
\end{align*}}%
where in $(a)$ we use the optimality conditions  w.r.t to $x_i$ in \eqref{eq:x-update}; in $(b)$
we use the nonexpansiveness of the  proximal operator,  $\nabla_{x_i} U_i(x_{i}^{r};w_{i}^{r+1},y^r) = \nabla_{x_i} f_i(w_i^{r+1}, y^r)$ (Assumption B2),  Assumption B4 (Lipschitz gradient), as well as the following identity
\begin{align*}
&\hspace{-3mm}\nabla_{x_{i}} U_i(x_{i}^{r+1};w_{i}^{r+1},y^r) -\nabla_{x_{i}} f(x^r,y^r)\nonumber\\
& =  \nabla_{x_{i}} U_i(x_{i}^{r+1};w_{i}^{r+1},y^r) - \nabla_{x_{i}} U_i(x_{i}^{r};w_{i}^{r+1},y^r) \nonumber\\
&\quad + \nabla_{x_{i}} U_i(x_{i}^{r};w_{i}^{r+1},y^r)-\nabla_{x_{i}} f(x^r,y^r).
\end{align*}}%
Moreover, utilizing the same argument for the optimality condition w.r.t to $y$ problem \eqref{eq:y-update}, we obtain:
\begin{align*}
\notag
&\| ({\nabla} {\mathcal{G}^{\beta}_{\rho}}(x^r,y^r))_{K+1}\|
\\\notag
\notag
\mathop{\le}\limits^{(a)} & {\frac{1}{\rho}}\|y^{r+1}-y^r\|+{\frac{1}{\rho}}\| {\mbox{\rm Py}^{1/\rho}}({y^{r}}+\rho\nabla_y f(x^{r+1},y^{r})) \nonumber\\
&\quad -{\mbox{\rm Py}^{1/\rho}}(y^r+ \rho \nabla_y f(x^r,y^r))\|
\\\notag
\mathop{\le}\limits^{(b)}& {\frac{1}{\rho}}\|y^{r+1}-y^r\|+\|\nabla_y f(x^{r+1},y^{r})-\nabla_y f(x^r,y^r)\|
\\
\mathop{\le}\limits^{(c)}&  L_{y}  \|x^{r+1}-x^r\| + {\frac{1}{\rho}}\|y^{r+1}-y^r\|,
\end{align*}%
where in $(a)$ we use the optimality conditions  w.r.t $y$, in $(b)$ we use the nonexpansiveness of the  proximal operator and finally in (c) the Assumption A.3.
Combining \eqref{eq.descp} and the above two inequalities, we see that there exist constants $\sigma_1>0$ and $\sigma_2>0$ such that the following holds:
\begin{equation}
\|{{\nabla} {\mathcal{G}^{\beta}_{\rho}}}(x^r,y^r)\|^2\le \frac{\sigma_2}{\sigma_1}(\mathcal{P}^{r}-\mathcal{P}^{r+1}).
\end{equation}
Summing the above inequality over $r\in [T]$, we have
\begin{equation}
\sum^T_{{r=1}}\|{{\nabla} {\mathcal{G}^{\beta}_{\rho}}}(x^r,y^r)\|^2\le\frac{\sigma_2}{\sigma_1}({\mathcal{P}^1}-\mathcal{P}^{T+1})\le\frac{\sigma_2}{\sigma_1}(\mathcal{P}^1-{\underline{\ell}}),
\end{equation}
where in the last inequality we have used the fact that $\mathcal{P}^r$ is decreasing (by Lemma \ref{le.condalg1}) and lower bounded by  {$\underline{\ell}$}. The latter fact is because, when condition \eqref{eq:case1_cond} holds true, the coefficient in front of $\|y^{r+1}-y^r\|^2$ is positive, therefore $\mathcal{P}^{r+1}$ is lower bounded by $\ell(x^{r+1},y^{r+1})$, according to Assumption A1. By utilizing the definition $T(\epsilon)$, the above inequality becomes
 $
T(\epsilon)\epsilon^{2}\le\frac{\sigma_2}{\sigma_1}(\mathcal{P}^1-{\underline{\ell}}).
$

{Dividing both sides by $\epsilon^2$, the desired result is obtained.}\QED

\subsection{Proof of Lemma \ref{descentlemma3}}\label{app:lemma4}
Following similar steps as in Lemma \ref{le.descent:x} and using the assumption $\beta^{r} >  L_{x_i},   \forall i$ we obtain
\begin{align}\label{eq.descentx}
\ell(x^{r+1},y^{r})-\ell(x^{r},y^r)
\le - \left( \frac{\beta^{r}}{2} +\mu \right) \|x^{r+1}-x^r\|^2,
\end{align}
where $\mu := \min\limits_{i\in [K]} \mu_i$. To analyze the $y$ update, define
$$\ell'(x^{r+1},y)=f(x^{r+1},y)+ {\sum\limits_{i=1}^{K} h_i(x_i^{r+1})}-\mathcal{I}_{\cal{Y}}(y) -g(y).$$ The optimality condition for the $y$ update is
\begin{equation}\label{eq:opt:y:3}
\xi^{r+1}-\nabla_yf(x^{r+1},y^{r+1})+\frac{1}{\rho}(y^{r+1}-y^r)+\gamma^ry^{r+1}=0,
\end{equation}
where $\xi^{r+1}\in \partial (\mathcal{I}_{\cal{Y}}(y^{r+1})+ g(y^{r+1}))$.
Using this, we have the following series of inequalities: 
\begin{align}
\notag
&{\ell'(x^{r+1},y^{r+1})-\ell'(x^{r+1},y^r)}\nonumber\\
\stackrel{(a)}\le& \langle\nabla_y f(x^{r+1},y^r),y^{r+1}-y^r\rangle-\langle\xi^r,y^{r+1}-y^r\rangle\notag
\\
\notag
\mathop{=}\limits^{(b)}&\langle\nabla_y f(x^{r+1},y^r)-\nabla_y f(x^{r+1},y^{r+1}),y^{r+1}-y^r\rangle \nonumber\\
& \notag\quad +\frac{1}{\rho}\|y^{r+1}-y^r\|^2+\gamma^r\langle y^{r+1},y^{r+1}-y^r\rangle
\\\notag
&\quad +\langle\xi^{r+1}-\xi^r,y^{r+1}-y^r\rangle 
\\\notag
\mathop{=}\limits^{(c)}&\gamma^{r-1}\langle y^{r},y^{r+1}-y^r\rangle+ \frac{1}{\rho}\|y^{r+1}-y^r\|^2-\frac{1}{\rho}\langle v^{r+1},y^{r+1}-y^r\rangle
\\\notag
&+\langle\nabla_yf(x^{r+1},y^r)-\nabla_yf(x^r,y^r),y^{r+1}-y^r\rangle
\\\notag
\mathop{\le}\limits^{(d)}&\frac{1}{2\rho}\|y^r-y^{r-1}\|^2+\frac{\rho {L^2_y}}{2}\|x^{r+1}-x^r\|^2
\\
&\notag \quad -(\frac{\gamma^{r-1}}{2}-\frac{1}{\rho})\|y^{r+1}-y^r\|^2\\
&\quad + \frac{\gamma^r}{2}\|y^{r+1}\|^2-\frac{\gamma^{r-1}}{2}\|y^r\|^2+\frac{\gamma^{r-1}-\gamma^r}{2}\|y^{r+1}\|^2, \label{eq.descenty}
\end{align}}%
where $(a)$ uses the concavity of $\ell'(x,y)$;  in $(b)$ we use \eqref{eq:opt:y:3};
$(c)$  follows from \eqref{eq:opt:y:3}, the optimality condition for $y$ at iteration $r$ and plugging the resulting $\xi^{r+1}-\xi^r$; in $(d)$ we use the quadrilateral identity \eqref{eq.eqrel} for the term involving $v$,  and ignore the resulting negative term $-\frac{1}{2\rho}\|v^{r+1}\|^2$, and the Lipschitz continuity of $\nabla_y f$ (cf. {Assumption} A. 3), the Young's inequality, as well as the following identity: {\small
\begin{align*}
\notag
&\hspace{-0.2cm}\gamma^{r-1}\langle y^r,y^{r+1} \hspace{-0.1cm} -y^r\rangle=\frac{\gamma^{r-1}}{2}\left(\|y^{r+1}\|^2-\|y^r\|^2-\|y^{r+1}\hspace{-0.1cm}-y^r\|^2\right)
\\
&\hspace{-0.2cm}=\frac{\gamma^r}{2}\|y^{r+1}\|^2 \hspace{-0.1cm} -\frac{\gamma^{r-1}}{2}(\|y^r\|^2\hspace{-0.1cm}+\|y^{r+1}-y^r\|^2)+\left(\frac{\gamma^{r-1}\hspace{-0.2cm}-\gamma^r}{2}\right)\hspace{-0.1cm}\|y^{r+1}\|^2.
\end{align*}}%
Combining \eqref{eq.descentx} and  \eqref{eq.descenty}, we obtain the desired result. \QED

\subsection{Proof of Lemma \ref{potential3}}\label{app:lemma5}
To simplify notation, define $f^{r+1}:=f(x^{r+1},y^{r+1})$. The optimality conditions of $y$ problem are given by
\begin{subequations}
	\begin{align}
&\hspace{-0.3cm}\langle \nabla_y f^{r+1}\hspace{-0.2cm}-\frac{1}{\rho}(y^{r+1}\hspace{-0.2cm}-y^r)-\gamma^ry^{r+1}   -\vartheta^{r+1}, y^{r+1}\hspace{-0.2cm}-y\rangle\ge 0
\label{eq.yopt1} \\
&\hspace{-0.3cm} \langle \nabla_y f^r\hspace{-0.2cm}-\frac{1}{\rho}(y^{r}-y^{r-1})-\gamma^{r-1}y^{r}-\vartheta^{r},  y^r-y\rangle\ge 0,\label{eq.yopt2}
\end{align}
\end{subequations}
for all $y \in \mathcal{Y}$, where  $\vartheta^{r+1} \in \vartheta g(y^{r+1})$.

Plugging in $y=y^r$ in \eqref{eq.yopt1}, $y=y^{r+1}$ in \eqref{eq.yopt2}, adding them together and utilizing the defining property of subgradient vectors, i.e $\langle \vartheta^{r+1}- \vartheta^{r}, y^{r+1} -y^r\rangle\ge 0$, we obtain 
\begin{align}\label{eq.recur}
&\frac{1}{\rho}\langle v^{r+1}, y^{r+1}-y^r\rangle + \langle\gamma^ry^{r+1}-\gamma^{r-1}y^r, y^{r+1}-y^r\rangle\nonumber\\
&\le \langle\nabla_y f^{r+1}-\nabla_y f^r,y^{r+1}-y^r\rangle,
\end{align}

where $v^{r+1}$ is defined in \eqref{eq:v}.
In the following, we will use the above inequality to analyze the recurrence of the size of the difference between two consecutive iterates. First, we have {\small
\begin{align}
\notag
&\langle\gamma^ry^{r+1}-\gamma^{r-1}y^r,y^{r+1}-y^r\rangle\nonumber\\
=&\langle\gamma^ry^{r+1}-\gamma^ry^r+\gamma^ry^r-\gamma^{r-1}y^r,y^{r+1}-y^r\rangle\nonumber
\\\notag
=&\gamma^r\|y^{r+1}-y^r\|^2+(\gamma^r-\gamma^{r-1})\langle y^r,y^{r+1}-y^r\rangle
\\\notag
=&\gamma^r\|y^{r+1}-y^r\|^2+\frac{\gamma^r-\gamma^{r-1}}{2}\left(\|y^{r+1}\|^2-\|y^r\|^2-\|y^{r+1}-y^r\|^2\right)
\\
=&\frac{\gamma^{r}+\gamma^{r-1}}{2}\|y^{r+1}-y^r\|^2\hspace{-0.1cm}-\frac{\gamma^{r-1}-\gamma^r}{2}\hspace{-0.1cm}\left(\|y^{r+1}\|^2\hspace{-0.1cm}-\|y^r\|^2\right).\label{eq.gammar}
\end{align}}%
Substituting \eqref{eq.gammar} and \eqref{eq.eqrel} into \eqref{eq.recur}, we have
\begin{align*}\notag
&\frac{1}{2\rho}\|y^{r+1}-y^r\|^2-\frac{\gamma^{r-1}-\gamma^r}{2}\|y^{r+1}\|^2
\\
\le & \frac{1}{2\rho}\|y^r-y^{r-1}\|^2-\frac{1}{2\rho}\|v^{r+1}\|^2-\frac{\gamma^{r-1}-\gamma^r}{2}\|y^r\|^2 \nonumber\\
& \hspace{-0.4cm}-\frac{\gamma^{r-1}\hspace{-0.1cm}+\gamma^r}{2}\|y^{r+1}-y^r\|^2\hspace{-0.1cm} +\langle\nabla_y f^{r+1}-\nabla_y f^r,y^{r+1}-y^r\rangle\nonumber
\\
\mathop{\le}\limits^{(a)}&\frac{1}{2\rho}\|y^r-y^{r-1}\|^2-\gamma^r\|y^{r+1}-y^r\|^2-\frac{\gamma^{r-1}-\gamma^r}{2}\|y^r\|^2\nonumber\\
&\quad +\langle\nabla_yf(x^{r+1},y^r)-\nabla_yf(x^r,y^r),y^{r+1}-y^r\rangle
\\
\mathop{\le}\limits^{(b)}&\frac{1}{2\rho}\|y^r-y^{r-1}\|^2-\frac{\gamma^{r-1}-\gamma^r}{2}\|y^r\|^2\nonumber\\
& +\frac{{L^2_y}}{2\gamma^r}\|x^{r+1}-x^r\|^2-\frac{\gamma^r}{2}\|y^{r+1}-y^r\|^2
\end{align*}
where $(a)$ is true because of the fact that  $0< \gamma^r<\gamma^{r-1}$,  which implies that $-\frac{\gamma^{r-1}-\gamma^{r}}{2}<0$ and $-\frac{\gamma^{r-1}+\gamma^{r}}{2}<-\gamma^{r}$, and the concavity of function $f(x,y)$ in $y$; in $(b)$ we use the Young's inequality. 
Next, let us define 
$$\mathcal{F}^{r+1}:=\frac{1}{2\rho}\|y^{r+1}-y^r\|^2-\frac{\gamma^{r-1}-\gamma^r}{2}\|y^{r+1}\|^2.$$ 
Then we have{\small
\begin{align}\notag
&\hspace{-0.2cm}\frac{4\mathcal{F}^{r+1}}{\rho\gamma^r}\le \frac{2}{\rho^2\gamma^r}\|y^r-y^{r-1}\|^2-\frac{2}{\rho}\left(\frac{\gamma^{r-1}}{\gamma^r}-1\right)\|y^{r}\|^2\nonumber\\
&\quad -\frac{2}{\rho}\|y^{r+1}-y^r\|^2+\frac{2{L^2_y}}{\rho(\gamma^r)^2}\|x^{r+1}-x^r\|^2\nonumber
\\\notag
\hspace{-0.3cm}\le&\frac{4\mathcal{F}^{r}}{\rho\gamma^{r-1}}+\frac{2}{\rho^2}\left(\frac{1}{\gamma^r}-\frac{1}{\gamma^{r-1}}\right)\hspace{-0.1cm}\|y^r-y^{r-1}\|^2\hspace{-0.1cm}+\frac{2}{\rho}\hspace{-0.1cm}\left(\frac{\gamma^{r-2}}{\gamma^{r-1}}-\frac{\gamma^{r-1}}{\gamma^r}\right)\hspace{-0.1cm}\|y^r\|^2
\\
&-\frac{2}{\rho}\|y^{r+1}-y^r\|^2+\frac{2{L^2_y}}{\rho(\gamma^r)^2}\|x^{r+1}-x^r\|^2.\label{eq.diffiter}
\end{align}}
Furthermore, {adding} \eqref{eq.descentlemma2} and \eqref{eq.diffiter}, {and ignoring the negative term -$\frac{\gamma^{r-1}}{2}\|y^{r+1}-y^{r}\|^2$} we have{\small
\begin{align*}
\notag
&{{\ell}(x^{r+1},y^{r+1})}-\frac{\gamma^r}{2}\|y^{r+1}\|^2 +  \frac{4\mathcal{F}^{r+1}}{\rho\gamma^r}
\\\notag
\le & {{\ell}(x^r,y^r)}-\frac{\gamma^{r-1}}{2}\|y^r\|^2+  \frac{4\mathcal{F}^{r}}{\rho\gamma^{r-1}} -\frac{1}{\rho}\|y^{r+1}-y^r\|^2
\\\notag
&\hspace{-0.6cm}-\left({\frac{\beta^r}{2} + \mu}-\left(\frac{\rho {L^2_y}}{2}+\frac{2{L^2_y}}{\rho(\gamma^r)^2}\right)\right)\hspace{-0.1cm}\|x^{r+1}-x^r\|^2\hspace{-0.1cm} +\frac{\gamma^{r-1}-\gamma^r}{2}\|y^{r+1}\|^2
\\
&\hspace{-0.6cm}+\frac{2}{\rho}\left(\frac{1}{4}+\frac{1}{\rho\gamma^r}-\frac{1}{\rho\gamma^{r-1}}\right)\|y^{r}-y^{r-1}\|^2+\frac{2}{\rho}\left(\frac{\gamma^{r-2}}{\gamma^{r-1}}-\frac{\gamma^{r-1}}{\gamma^r}\right)\|y^r\|^2.
\end{align*}}%
Finally, by adding to both sides the term $\frac{2}{\rho}\left(\frac{1}{\rho\gamma^{r+1}} - \frac{1}{\rho\gamma^{r}} \right)\|y^{r+1}-y^{r}\|^2$, using the definition $\{\mathcal{P}^{r}\}$ in \eqref{eq:potential:3}, we obtain{\small
\begin{align*}
\notag
&\mathcal{P}^{r+1}- \mathcal{P}^{r}\le -\frac{1}{2\rho}\|y^{r+1}-y^r\|^2 +\frac{\gamma^{r-1}-\gamma^r}{2}\|y^{r+1}\|^2\nonumber\\
&-\left({\frac{\beta^r}{2} + \mu}-\left(\frac{\rho {L^2_y}}{2}+\frac{2{L^2_y}}{\rho(\gamma^r)^2}\right)\right)\|x^{r+1}-x^r\|^2\nonumber
\\
&+\frac{2}{\rho}\left(\frac{1}{\rho\gamma^{r+1}}-\frac{1}{\rho\gamma^{r}}\right)\|y^{r+1}-y^{r}\|^2+\frac{2}{\rho}\left(\frac{\gamma^{r-2}}{\gamma^{r-1}}-\frac{\gamma^{r-1}}{\gamma^r}\right)\|y^r\|^2.
\end{align*}}%
According to the above, to achieve descent in $\|y^{r+1}-y^r\|^2$ we need to ensure that the following holds:
\begin{equation}
-{1}/{2\rho}+{2}/{\rho^2}({1}/{\gamma^{r+1}}-{1}/{\gamma^r})<0\label{eq.shrink}.
\end{equation}
Note that, \eqref{eq.shrink} is equivalent to the condition
$
\frac{1}{\gamma^{r+1}}-\frac{1}{\gamma^r} \le {\rho}/{4}.
$
which holds by {condition \eqref{assumpt}}. 
This completes the proof.  \QED

\subsection{Proof of Theorem \ref{th.main3}}\label{app:T2}
{For simplicity, let $\mathcal{G}^r_{i}: = (\mathcal{G}^{\beta^{r}}_{\rho}(x^r,y^r))_{i}$. Similarly as in the proof of Theorem \ref{th.main}, we have
$$\|  \mathcal{G}^r_{i}\|
\mathop{\le} {\left(\beta^r + {L_{u_i} + L_{x_i}} \right)}\|x^{r+1}-x^r\|, \; \forall i\in[K].$$
For the corresponding bound for $y$ we have{\small
\begin{align}
\notag
&\|  {\nabla} \mathcal{G}^{r}_{K+1}\|
\\\notag
\le& {\frac{1}{\rho}}\|y^{r+1}-y^r\|+{\frac{1}{\rho}}\|y^{r+1}-{\mbox{\rm Py}^{1/\rho}}(y^r+ \rho \nabla_y f(x^r,y^r))\|
\\\notag
\mathop{=}\limits^{(a)} & {\frac{1}{\rho}}\|y^{r+1}-y^r\|+{\frac{1}{\rho}}\|{\mbox{\rm Py}^{1/\rho}}(y^{r}+\rho\nabla_y f(x^{r+1},y^{r+1})-\rho \gamma^r y^{r+1})\nonumber\\
&-{\mbox{\rm Py}^{1/\rho}}(y^r+ \rho \nabla_y f(x^r,y^r))\|\nonumber
\\\notag
\mathop{\le}\limits^{(b)}&  L_{y} \|x^{r+1}-x^r\| + \left({\frac{1}{\rho}}+L_{y} \right)\|y^{r+1}-y^r\| + \gamma^r\|y^{r+1}\|,
\end{align}}%
where in $(a)$ we use the optimality conditions  w.r.t $y$; in $(b)$ we use the nonexpansiveness of the  proximal operator, as well as the the Lipschitz gradient condition w.r.t $y$ two times. Combining the above two bounds we obtain{\small
\begin{align}
&\| {\nabla} \mathcal{G}^r\|^{2}  \leq
\sum\limits_{i=1}^{K} {\left(\beta^r + {L_{u_i} + L_{x_i}} \right)^2} \|x^{r+1}-x^r\|^{2} \nonumber \\
&+3(\gamma^r)^{2}\|y^{r+1}\|^{2} + 3L_{y}^{2} \|x^{r+1}-x^r\|^{2} + 3\left({\frac{1}{\rho}}+ L_{y} \right)^{2}\hspace{-0.2cm}\|y^{r+1}-y^r\|^{2}   \nonumber\\
& \leq \left(K({L +\beta^{r}})^{2}+ 3L_{y}^{2}\right) \|x^{r+1}-x^r\|^{2} \nonumber\\
&+ 3\left({\frac{1}{\rho}}+ L_{y} \right)^{2}\|y^{r+1}-y^r\|^{2} +3(\gamma^r)^{2}\|y^{r+1}\|^{2}, \label{eq:size:G}
\end{align}}
\hspace{-0.1cm}where we defined {$L := \hspace{-0.1cm} \max\limits_{i\in [K]} (L_{u_i} + L_{x_i})$.}}
Moreover, we choose
\begin{align}\label{eq:beta:r}
\beta^r=\rho {L^2_y}+\frac{{2 \kappa L^2_y}}{\rho(\gamma^r)^2}  -2\mu,
\end{align}
where $\kappa$ is chosen to satisfy
$\kappa >2,  \quad \beta^{0} >  L_{x_i},  \; \forall i$.

{By condition \eqref{eq.conditions31}, it is clear that $\beta^{r+1} \geq \beta^{r}$. Combining this with the choice of $\kappa$ we have: $\beta^{r} \geq \beta^{0}>L_{x_i}, \; \forall i,r$.}  Thus, this choice of $\beta^{r}$ satisfies Assumption C-3.

Moreover, such a choice implies that
\begin{align}\label{eq:alpha}
\alpha^r:=\frac{\beta^r}{2} +  \mu-\bigg(\frac{\rho {L^2_y}}{2}+\frac{2{L^2_y}}{\rho(\gamma^r)^2}\bigg)=\frac{{(\kappa-2) L^2_y}}{\rho(\gamma^r)^2}.
\end{align}
Using these properties in \eqref{eq:size:G}, the constants in front of $\|x^{r+1}-x^r\|^2$ becomes
{{
{\begin{align}
\notag
&K( L +\beta^{r})^{2}+ 3L_{y}^{2}= K \left(L + \rho  L^2_y+\frac{ 2 \kappa L^2_y}{\rho(\gamma^r)^2}  -2\mu\right)^{2} + 3 L_{y}^{2}
\\\notag
& \stackrel{(a)}=  \left(K^{2}L+ \rho K^2 L_y -2\mu K^{2} + K^2\frac{2\kappa }{\kappa-2}\alpha^{r} \right)^{2} + 3 L_{y}^{2}
\\
&\stackrel{(b)}\le (d_1 \alpha^r)^{2}\label{eq.bdofk}
\end{align}}%
\noindent in  $(a)$ we use the identity shown in \eqref{eq:alpha};  $(b)$ always holds for some $d_1>1$ (which are both independent of $r$), since $\alpha^r$ is an increasing sequence, and $\alpha^0$ is bounded away from zero.}  Note that {since} $y$ lies in a bounded set, there exists $\sigma_y$ such that $\|y^{r+1}\|^2\le \sigma^2_y, \forall~ r$. Using \eqref{eq.bdofk}, setting $z:= 3\left(L_{y}+ \frac{1}{ \rho} \right)^{2}$, we obtain
\begin{align}
\| {\nabla} \mathcal{G}^r\|^{2}
&\le (d_1 \alpha^r)^{2}  \|x^{r+1}-x^r\|^{2}\nonumber\\
& \quad \quad + z\|y^{r+1}-y^r\|^{2} + 3(\gamma^r)^{2}\sigma_{y}^{2}  \label{eq:bound:G}
\end{align}}
Furthermore, when $\beta^r=\rho {L^2_y}+\frac{{2 \kappa L^2_y}}{\rho(\gamma^r)^2} -2\mu$ and since $\frac{1}{\gamma^{r+1}}-\frac{1}{\gamma^r}\le {\frac{\rho}{5}}$, the bound of the potential function  \eqref{eq:whole:2} becomes
\begin{align}
\mathcal{P}^{r+1} &\le \mathcal{P}^{r}-\frac{1}{10\rho}\|y^{r+1}-y^r\|^2-\alpha^r\|x^{r+1}-x^r\|^2\nonumber\\
&+\frac{\gamma^{r-1}-\gamma^r}{2}\|y^{r+1}\|^2
+\frac{2}{\rho}\left(\frac{\gamma^{r-2}}{\gamma^{r-1}}-\frac{\gamma^{r-1}}{\gamma^r}\right)\|y^r\|^2.\nonumber
\end{align}

{Because $\{\alpha^r\}$ is increasing and  $\|y^r\|^2\le\sigma^2_y$, the above relation implies the following
\begin{align}
&\frac{1}{10\rho}\|y^{r+1}-y^r\|^2+\alpha^r\|x^{r+1}-x^r\|^2\label{eq.recc}\\
&\le\mathcal{P}^r-\mathcal{P}^{r+1}+\frac{\gamma^{r-1}-\gamma^r}{2}\sigma^2_y
+\frac{2}{\rho}\left(\frac{\gamma^{r-2}}{\gamma^{r-1}}-\frac{\gamma^{r-1}}{\gamma^r}\right)\sigma^2_y.\nonumber
\end{align}
Let us define
\begin{align*}
{d_2^r} := \min\left\{\frac{1}{10\rho}, 1\right\}/\max\left\{z, d^2_1 \alpha^r \right\}.
\end{align*}
Then by combining \eqref{eq.recc} and \eqref{eq:bound:G}, we obtain
\begin{align}
&\| {\nabla} \mathcal{G}^r\|^{2}\times {d_2^r} \le  \mathcal{P}^r-\mathcal{P}^{r+1}\nonumber\\
&+\frac{\gamma^{r-1}-\gamma^r}{2}\sigma^2_y
+\frac{2}{\rho}\left(\frac{\gamma^{r-2}}{\gamma^{r-1}}-\frac{\gamma^{r-1}}{\gamma^r}\right)\sigma^2_y + 3(\gamma^r)^2\sigma^2_y  \times {d_2^r}\nonumber
\end{align}}
Summing both sides from  $r=1$ to $T$, and noting that condition \eqref{eq.conditions31} implies  $\frac{\gamma^{r}}{\gamma^{r+1}}\le 1.2, \; \forall r$, we obtain 
\begin{align*}
& \sum^T_{{r=1}}{d_2^r}\|{{\nabla} \mathcal{G}^{r}}\|^2 \le \sum^{T}_{{r=1}}{d_2^r \frac{3(\kappa-2) L_y^2 \sigma_{y}^{2}}{ \rho \alpha^{r}}} + \nonumber \\
& + \mathcal{P}^1-\underbar{$\mathcal{P}$}+\sigma_y^{2} \left(\frac{{\gamma^{0}}-\gamma^T}{2}+\frac{2}{\rho}\left(\frac{{\gamma^{-1}}}{{\gamma^{0}}}-\frac{\gamma^{T-1}}{\gamma^T}\right)\right) \nonumber,
\end{align*}
{where we have defined 
	$$d_3 := \mathcal{P}^1-\underline{\mathcal{P}}+\sigma_y^{2} \left(\frac{{\gamma^{0}}-\gamma^T}{2}+\frac{2}{\rho}\left(\frac{{\gamma^{-1}}}{{\gamma^{0}}}-\frac{\gamma^{T-1}}{\gamma^T}\right)\right);$$ $\underbar{$\mathcal{P}$}$ is a lower bound of $\mathcal{P}$, which is a finite number due to the lower boundness assumption of $\underbar{$\ell$}$ and the compactness of $\mathcal{Y}$ (see Assumption A.1).}
Notice that since $d_1>1$, we have  
$$d_2^r \leq \frac{d_4}{d^2_1 a^r} \leq \frac{d_4}{a^r},$$ 
where  $d_4 := \min\left\{\frac{1}{10\rho}, 1\right\}$. Also, there exists $d_5 > \max\left\{\frac{d_1^2}{d_4},\frac{z}{d_4a^{0}} \right\}$ such that $d_2^r \geq \frac{1}{d_5 \alpha^r}$.

By utilizing the definition of $T(\epsilon)$ {and the above bounds}, we know that
{
\begin{equation} \label{eq:rate_case3}
\epsilon^{2} \le\frac{d_3d_5+ \frac{3 d_4 d_5(\kappa-2) L_y^2 \sigma_{y}^{2}}{ \rho }\sum^{T(\epsilon)}_{ r=1}\frac{1}{(\alpha^r)^2}}{\sum^{T(\epsilon)}_{{r=1}}\frac{1}{\alpha^r}}.
\end{equation}
}
Moreover, when $\gamma^r = \frac{1}{\rho r^{1/4}}$, it can be verified that the following holds:
$$\frac{1}{\gamma^{r+1}}-\frac{1}{\gamma^r}\le 0.19 \rho,\quad \forall r\ge 1,$$
because $(r+1)^{1/4}-(r)^{1/4}$ is a monotonically decreasing function and its maximum value is achieved at $r=1$. We can plug in this choice of $\gamma^r$ into \eqref{eq:alpha},  and obtain  
$$\alpha^r={(\kappa-2)\rho L^2_y\sqrt{r}}.$$ 

Using these choices of $\{\gamma^r, \alpha^r\}$, and by utilizing the bounds that $\sum_{r=1}^{T} 1/r \leq c\ln(T)$ (for some $c>0$), and $\sum_{r=1}^{T} 1/\sqrt{r} \geq \sqrt{T} $,
{the relation \eqref{eq:rate_case3}} becomes:

\begin{equation}\label{eq:bound123}
\epsilon^{2} \le\frac{C\log(T(\epsilon))}{\sqrt{T(\epsilon)}},
\end{equation}
where $C>0$ is some constant independent of the iteration. {Then, the desired result follows directly from \eqref{eq:bound123}}. \QED

\end{document}


\maketitle

\section{Preliminaries}
In this document, we consider the problem 
\begin{align}\label{eq:problem}
\begin{split}
\min_{{x}}\max_{{y}}&\quad f(x_1,x_2,\cdots, x_K, y) + \sum_{i=1}^{K}h_i(x_i) - g(y)\\
\mbox{s.t.} & \quad x_i\in \mathcal{X}_i, \; y\in \mathcal{Y}, \; i=1,\cdots, K
\end{split},
\end{align}
where $f:\mathbb{R}^{{N K}+M}\to \mathbb{R}$ is a continuously differentiable function; $h_i: \mathbb{R}^{N}\to \mathbb{R}$ and $g:\mathbb{R}^{M}\to \mathbb{R}$ are some convex possibly non-smooth functions; $x:=[x_1;\cdots; x_K]\in\mathbb{R}^{{N\cdot K}}$ and $y\in\mathbb{R}^M$ are the block optimization variables; $\mathcal{X}_i$'s and $\mathcal{Y}$ are some convex {and compact} feasible sets. 
For notational simplicity, we will use $\ell(x_1,x_2,\cdots, x_K, y)$ to denote the overall objective function for problem \eqref{eq:problem}.

In this document, we analyze a version of the HiBSA algorithm proposed in \cite{lu2019hybrid_ieee}, in which the following problem is solved {approximately}  when updating $y$:
\begin{align}\label{eq:max}
{\widetilde{y}^{r+1}} = \arg\max_{y \in \mathcal{Y}}\{f({x^{r+1}},y)-g(y) - \frac{\gamma^{r}}{2} \|y\|^{2}\}.
\end{align}
In particular, a finite number of gradient ascent steps will be performed so that problem \eqref{eq:max} is solved to certain accuracy. Compared with the version proposed in the main paper \cite{lu2019hybrid_ieee}, which solves the above problem exactly at each iteration, some major benefits of such an approach are given below: 
\begin{itemize}
	\item One does not need to know the precise form of $f(x,y)$, but only need to have access to the gradients $\nabla_x f(x,y)$ and $\nabla_y f(x,y)$;
	\item Since only gradient evaluations are needed, it is more convenient to compare the resulting complexity bounds with those given in the existing works; see \cite[Table I]{lu2019hybrid_ieee}.
\end{itemize}

More precisely, the modified HiBSA algorithm to be analyzed in this document takes the following form:
\begin{center}
\fbox{\small
\vspace{-1cm}
\begin{minipage}{0.95\linewidth}
\textsf{\bf Hybrid Block Successive Approximation (HiBSA) Algorithm}\\ 
\textsf{(multiple ascent steps for the maximization problem)} \\
At each iteration $r=1, 2, 3,\cdots$ \\
\; {\bf [S1].} For $i=1,\cdots, K$, perform the following update:
\begin{align}\label{eq:x-update}
\hspace{-0.3cm}	x_i^{r+1}\hspace{-0.1cm}	 =&\arg\hspace{-0.1cm}	\min_{x_i\in\mathcal{X}_i}\hspace{-0.1cm}	 U_{i}(x_{i};  w^{r+1}_i, y^r) + h_i(x_i) \hspace{-0.05cm}	 +\hspace{-0.05cm}	  \frac{\beta^r}{2}\|x_i-x_i^r\|^2
\end{align}		
					
\; {\bf [S2].} Perform ${J^{r}}$ updates for the $y$-block, i.e for $j=1,\ldots,{J^{r}}$ ($y^{r+1,1} = y^{r}$) :
{{\small
\begin{align}\label{eq:y-update}
y^{r+1,j+1}=\arg\max_{y\in\mathcal{Y}}\;\Big\{  &\langle \nabla_y f(x^{r+1}, y^{r+1,j}) - \gamma^{r}y^{r+1,j}, y - y^{r+1,j} \rangle   \nonumber\\ \vspace{-2mm}
&- g(y)-\frac{1}{2\rho}\|y-y^{r+1,j}\|^2\Big\}
\end{align}}}
\hspace{1mm}$y^{r+1} = y^{r+1,{J^{r}}}$	

\; {\bf [S3].} If  converges, stop; otherwise, set  $r=r+1$, go to  {\bf [S1]}.
				\end{minipage}
			}
		\end{center}

Below we provide the main assumptions. 

\noindent{\underline{\sf{Assumption A}}.} The following conditions hold for \eqref{eq:problem}:
\begin{itemize}
	\item[A.1] $f:\mathbb{R}^{KN+M}\to \mathbb{R}$ is continuously differentiable; The feasible sets $\mathcal{X}  = \mathcal{X}_{1} \times \ldots \times \mathcal{X}_{K}$ and $\mathcal{Y} \subseteq \mathbb{R}^{M}$ are convex and compact. {Further $\ell(x,y)$ is lower bounded, that is, $\ell(x,y)\ge \underline{\ell},\; \forall~x\in \mathcal{X}, y\in \mathcal{Y}$.}
	
	\item[A.2] $h_i(\cdot)$'s and $g(\cdot)$  are convex and non-smooth functions;
	
	\item[A.3]$f$ has Lipschitz continuous gradient with respect to (w.r.t.) $x_{i}$ for every $i$ with constant $L_{x_{i}}$, that is:
	\vspace{-0.10cm}
	\begin{align}
	\hspace{-0.9cm}\| \nabla_{x_{i}} f(\bar{z}) - \nabla_{x_{i}}f(z) \| \leq L_{x_{i}} \| \bar{z} - z \|, \forall \bar{z}, z \in  \mathcal{X} \times \mathcal{Y};
	\end{align}
	Furthermore, $f$ has Lipschitz continuous gradient w.r.t. $y$ with constant $L_{y}$, that is:
	\vspace{-0.10cm}
	\begin{subequations}
		\begin{align}
		\hspace{-0.9cm}{\| \nabla_{y} f(\bar{z}) - \nabla_{y}f(z) \| \leq L_{y} \| \bar{z} - z \|, \forall \bar{z}, z \in  \mathcal{X} \times \mathcal{Y}} \label{eq:Ly}.
		\end{align}	
	\end{subequations}
	\vspace{-0.6cm}
\end{itemize}

\noindent{\underline{\sf{Assumption B}}.} Each $U_i(\cdot)$ satisfies the following conditions:
\begin{itemize}
	\item[B.1] {\bf (Strong convexity).}  Each $U_{i}(\cdot; w, y)$ is strongly convex with modulus $\mu_{i}>0$: {\small
		\begin{align*}
		& U_{i}(x_i; w, y) - U_{i}(z_i; w, y) \ge \langle \nabla_{z_i} U_{i}(z_i; w, y), x_i-z_i \rangle \nonumber\\
		&\quad + \frac{\mu_i}{2}\|x_i-z_i\|^2, \; \; \forall~w \in \mathcal{X}, y\in \mathcal{Y}, x_i, z_i\in \mathcal{X}_{i}.
		\end{align*}}
	\item[B.2] {\bf (Gradient consistency).} Each $U_{i}(\cdot; w,y)$ satisfies:
	\begin{align*}
	\hspace{-0.8cm}{\nabla_{z_i} U_{i}({z_i}; x, y)\Bigr|_{z_i=x_i}} = \nabla_{x_i} f(x,y), \; \forall~i, \; \forall~x\in \mathcal{X}, \; y\in \mathcal{Y}.
	\end{align*}
	\item[B.3] {\bf (Tight upper bound).} Each $U_{i}(\cdot; w,y)$ satisfies:
	\begin{align*}
	U_{i}(z_i; x, y) & \ge f(x,y), \;  \mbox{and}\; U_{i}(x_i; x, y) = f(x,y), \nonumber\\
	&\quad \; \forall~ x\in \mathcal{X}, y\in \mathcal{Y}, \; z_i\in \mathcal{X}_{i}.
	\end{align*}
	\item[B.4] {\bf (Lipschitz gradient).} Each $U_{i}(\cdot; w,y)$ satisfies:
	\begin{align*}
	&\|\nabla U_{i}(z_i; w, y) - \nabla U_{i}(v_i; w, y)\|\le L_{u_i}\|v_i-z_i\|, \\
	&\quad \; \forall~ w\in \mathcal{X}, y\in \mathcal{Y}, v_i, z_i\in \mathcal{X}_{i}.
	\end{align*}
\end{itemize}

\noindent\underline{\sf{Assumption C.}} Suppose that the following conditions hold:
\begin{align}
\gamma^{r} & \ge\gamma^{r+1}, \; {\gamma^{r}}\rightarrow 0, \; \sum^{\infty}_{r=1} \gamma^r=\infty\\
\beta^{r}  & >  \frac{c}{\gamma^{r-1}} + d,
\end{align}

Recall that the $\epsilon$ stationarity condition we are interested in is given below:
\begin{align}\label{eq:opt}
\|{\nabla} {\mathcal{G}^{\beta}_{\rho}}(x,y)\|\le \epsilon
\end{align}
where $\epsilon>0$ is a small constant, and we have defined the following gap function:
\begin{equation}\label{eq.optgap}
{\nabla} {\mathcal{G}^{\beta}_{\rho}}(x,y):=\left[
\begin{array}{c}\beta(x_1-\mbox{\rm Px}^{\beta}_{1}(x_1-1/\beta\nabla_{x_1}f(x,y)))
\\
\vdots
\\
\beta(x_K-\mbox{\rm Px}^{\beta}_{K}(x_K-1/\beta\nabla_{x_K} f(x,y) ))
\\
1/\rho(y - \mbox{\rm Py}^{1/\rho}(y+ \rho\nabla_y f(x,y)))\end{array}\right].
\end{equation}
where 
\begin{align}\label{eq:prox}
&\mbox{\rm Px}^{\beta}_{i}(v_{i}) := \arg\min_{x_{i} \in \mathcal{X}}\;
h_i(x_i) +\frac{\beta}{2}\|x_{i}-v_i\|^2, \; \forall~i\in[K]\\
&\mbox{\rm Py}^{1/\rho}(w) := \arg\max_{y\in \mathcal{Y}}\;  - g(y) - \frac{1}{2\rho}\|y-w\|^2. \nonumber
\end{align}%

\section{Convergence Analysis}
{
\begin{lemma}\label{lemma:constants}
Consider the following functions:
\begin{align*}
&f_{\gamma^{r}}(x,y) := f(x,y)-\frac{\gamma^r}{2}\|y\|^2 \\
&r_{\gamma^{r}}(x,y) : =f_{\gamma^{r}}(x,y)-g(y).
\end{align*}
It holds that $f_{\gamma^{r}}(x,y)$ and $r_{\gamma^{r}}(x,y)$ are strongly concave  in $y$ with strong convexity constant being $\gamma^{r}$, while $f_{\gamma^{r}}(x,y)$ has Lipschitz {continuous} gradient in $y$ with constant $L_y + \gamma^{r}$.
\end{lemma}

\begin{proof}
The proof is straightforward. \hfill $\blacksquare$
\end{proof}
}
{
\begin{lemma}\label{lemma:phi_prop}
{Let us define 
\begin{align*}
&\sigma_{\gamma^r}(x): = \max\limits_{y \in \mathcal{Y}} r_{\gamma^{r}}(x,y) \\
&\phi_{\gamma^r}(x): = \sigma_{\gamma^r}(x)  + \sum_{i=1}^{K} h_{i}(x_i).
\end{align*}
}
The function $\sigma_{\gamma^r}(x)$ is differentiable, that is
\begin{align}\label{eq:sigma:gradient}
\nabla_{x_{i}} \sigma_{\gamma^{r}}(x) := \nabla_{x_{i}} r_{\gamma^{r}}(x,\widetilde{y}),\; \forall~i
\end{align}
where
\begin{align}
\widetilde{y} := \arg\max_{y \in \mathcal{Y}}\{f(x,y)-g(y) - \frac{\gamma^{r}}{2} \|y\|^{2}\}.
\end{align}
Further, $\sigma_{\gamma^r}(x)$ has Lipschitz {continuous} gradient in $x_i$ with constant $\widetilde{L}_{x_i}^{r} = 2L_{x_i} +\frac{L_{x_i} L_y}{\gamma^{r} }$. Finally, it holds that { $$\phi_{\gamma^{r}}(x) \leq \phi_{\gamma^{r-1}}(x)  + \frac{1}{2}(\gamma^{r-1}-\gamma^{r}) d^2,$$}
for some constant $d$. 
\end{lemma}

\begin{proof}
First of all, the differentiability of $\sigma_{\gamma^r}(x)$ follows from the fact that the problem $\arg\max\limits_{y \in \mathcal{Y}} r_{\gamma^{r}}(x,y)$ has a unique solution, and Danskin's theorem \cite[Theorem A.1]{madry2017towards}. 
{Secondly, the fact that it has Lipschitz {continuous} gradient in $x_i$ is also well-known; see, e.g.,  \cite[Lemma B.1]{nouiehed2019solving}.}

Finally, note that
\begin{align}\label{sigma_ineq}
&\sigma_{\gamma^{r}}(x)\nonumber\\
& = \max\limits_{y \in \mathcal{Y}} \{ f(x,y) -g(y)-\frac{\gamma^{r-1}}{2}\|y\|^2+\frac{\gamma^{r-1}-\gamma^{r}}{2}\|y\|^2\} \\
&\leq \sigma_{\gamma^{r-1}}(x)  + \max\limits_{y \in \mathcal{Y}} \{\frac{\gamma^{r-1}-\gamma^{r}}{2}\|y\|^2\} \\
& \leq \sigma_{\gamma^{r-1}}(x) + \frac{\gamma^{r-1}-\gamma^{r}}{2} d^2,
\end{align}%
since $\gamma^{r-1}\geq \gamma^{r}$,  and $d:=\max\{\|y\|\mid y\in \mathcal{Y}\}$.  
{By adding $\sum_{i=1}^{K} h_{i}(x_i)$  to both sides of \eqref{sigma_ineq} we complete our proof.}
\hfill $\blacksquare$
\end{proof}
}

\vspace{1mm}
{
Next we proceed to the main results.
\begin{proposition}[Analysis of the minimization problem]\label{prop.main3}
{\it Suppose that {Assumptions A -- C hold}.  Let  $\{y^{r}\}$ be the iterate generated by HiBSA. Assume that we can obtain a {$\delta^{r} = \frac{1}{r^{1/3}}$} accurate solution of the  maximization problem \eqref{eq:max} at iteration $r$.  That is, suppose that the following hold:
\begin{align}\label{eq:y}
\|y^{r} - \widetilde{y}^{r} \| \leq \delta^{r}, \;  {\rm where} \; \widetilde{y}^{r} : = \arg \max\limits_{y \in \mathcal{Y}} r_{\gamma^{r-1}}(x^{r},y).
\end{align}
For a given $\epsilon>0$, let
$$T(\epsilon) \:= \min\{r \geq 1 \mid \|{{\nabla} \mathcal{G}^{\beta^{r}}}({x^{r+1},y^{r+1}})\|\le\epsilon\}.$$
Then, {$T(\epsilon) = \widetilde{\mathcal{O}}\left(\frac{1}{\epsilon^3} \right).$}
}
\end{proposition}
}
\begin{proof}
{
{First, from our assumptions $h$ is a convex function, so the following holds:
\begin{align}\label{eq:hsubgr}
h_{i}(x_{i}^{r}) \geq h_i(x_{i}^{r+1}) + \langle \vartheta^{r+1}_i, x_{i}^{r} - x_{i}^{r+1}\rangle,
\end{align}
where $\vartheta^{r+1}_i\in \partial h_i(x^{r+1}_i)$.}
{Also, from Lemma \ref{lemma:phi_prop} we know that {$\sigma_{\gamma^{r-1}}(x_{i},x_{-i})$} has Lipschitz {continuous} gradient w.r.t ${x_i}$ with constant {$\widetilde{L}_{x_i}^{r-1}$}.}
{As a result, combining the descent lemma for $\sigma_{\gamma^{r-1}}(x)$ with \eqref{eq:hsubgr} we obtain}
\begin{align}\label{eq:prop33}
&\phi_{\gamma^{r-1}}(x_{i}^{r+1},w_{-i}^{r+1}) - \phi_{\gamma^{r-1}}(x_{i}^{r},w_{-i}^{r+1})   \\
&\leq \langle \nabla_{x_{i}} \sigma_{\gamma^{r-1}}(x_{i}^{r},w_{-i}^{r+1}) + \vartheta^{r+1}_i, x_{i}^{r+1} - x_{i}^{r} \rangle + \frac{\widetilde{L}_{x_{i}}^{r-1}}{2} \| x_{i}^{r+1} - x_{i}^{r} \|^{2} \nonumber\\
& = \langle \nabla_{x_{i}} f(w_{i}^{r+1}, {\hat{y}_i^{r}}) + \vartheta^{r+1}_i, x_{i}^{r+1} - x_{i}^{r} \rangle + \frac{\widetilde{L}_{x_{i}}^{r-1}}{2} \| x_{i}^{r+1} - x_{i}^{r} \|^{2}\nonumber
\end{align}%
{where we have defined}
\begin{align} \label{def_yhat_i}
&{\hat{y}^{r}_i}: = \arg \max\limits_{y \in \mathcal{Y}}\; r_{\gamma_{r-1}}(x_i^r,w_{-i}^{r+1},y),\; \; w_{i}^{r+1} : = (x_{i}^{r},w_{-i}^{r+1}),\nonumber\\
&\nabla_{x_{i}} \sigma_{\gamma^{r-1}}(x_{i}^{r},w_{-i}^{r+1}) := \nabla_{x_{i}} r_{\gamma^{r-1}}(x_{i}^{r},w_{-i}^{r+1},{\hat{y}^{r}_i}).
\end{align}
{Note that \eqref{def_yhat_i} follows from Lemma \ref{lemma:phi_prop} for $x = w_{i}^{r+1}$.} %

Second, from the optimality condition for the update step of $x_i$  in \eqref{eq:x-update}, we obtain
\begin{align}\label{eq:eq3}
&\langle \nabla_{x_{i}} U_{i}(x_{i}^{r+1}; w_{i}^{r+1}, y^{r})  + \vartheta_i^{r+1} \nonumber\\
& \quad\quad \quad  + \beta^{r}(x_{i}^{r+1} - x_{i}^{r}), x_{i}^{r} - x_{i}^{r+1} \rangle \geq 0.
\end{align}%
Thus by adding and subtracting  $\langle \nabla_{x_{i}} U_{i}(x_{i}^{r}; w_{i}^{r+1}, y^{r}), x_{i}^{r} - x_{i}^{r+1}\rangle $ in \eqref{eq:eq3} and by applying Assumptions B.1, B.2, we obtain the following:
\begin{align*}
&\langle \nabla_{x_{i}} f(w_{i}^{r+1}, y^{r}), x_{i}^{r+1} - x_{i}^{r} \rangle + \langle \vartheta^{r+1}_i, x_{i}^{r+1} - x_{i}^{r} \rangle \nonumber\\
&\leq
-\mu \| x_{i}^{r+1} - x_{i}^{r} \|^{2} - \beta^{r}  \|x_{i}^{r+1} - x_{i}^{r}\|^{2},
\end{align*}
with $\mu := \min\limits_{i\in [K]} \mu_i$.
Moreover, adding and subtracting  $\langle \nabla_{x_{i}} f(w_{i}^{r+1}, {\hat{y}^{r}_i}), x_{i}^{r+1} - x_{i}^{r}\rangle $  {to the above expression leads to } 
\begin{align}\label{eq:11111}
&\langle \nabla_{x_{i}} f(w_{i}^{r+1}, {\hat{y}^{r}_i}) + \vartheta^{r+1}_i, x_{i}^{r+1} - x_{i}^{r} \rangle \nonumber\\
&\leq
-\mu \| x_{i}^{r+1} - x_{i}^{r} \|^{2} - \beta^{r}  \|x_{i}^{r+1} - x_{i}^{r}\|^{2} \nonumber \\
& + \langle \nabla_{x_{i}} f(w_{i}^{r+1}, {\hat{y}^{r}_i}) - \nabla_{x_{i}} f(w_{i}^{r+1}, y^{r}), x_{i}^{r+1} - x_{i}^{r} \rangle \nonumber\\
&\leq -(\mu + \beta^{r}) \| x_{i}^{r+1} - x_{i}^{r} \|^{2} + \frac{1}{2\gamma^{r-1}}\| x_{i}^{r+1} - x_{i}^{r} \|^{2} \nonumber\\
&+ \frac{\gamma^{r-1} L_{x_i}^{2}}{2} \|{\hat{y}^{r}_i} - y^{r} \|^{2},
\end{align}
{where the last inequality follows from Young's inequality and the Lipschtiz gradient property of $f(\cdot)$.}
Then, combining \eqref{eq:11111} with \eqref{eq:prop33} results in
{\small
\begin{align*}
&\phi_{\gamma^{r-1}}(x_{i}^{r+1},w_{-i}^{r+1}) - \phi_{\gamma^{r-1}}(x_{i}^{r},w_{-i}^{r+1}) \leq  \frac{\gamma^{r-1} L_{x_i}^{2}}{2} \| {\hat{y}^{r}_i} - y^{r} \|^{2} \nonumber \\
&-\left(\beta^r +\mu - \frac{1}{2\gamma^{r-1}} - \frac{\widetilde{L}_{x_{i}}^{r-1}}{2}\right) \| x_{i}^{r+1} - x_{i}^{r} \|^{2}.
\end{align*}}%
Summing over $i\in [K]$ leads to
{\small
\begin{align}\label{eq:eq4}
&\phi_{\gamma^{r-1}}(x^{r+1}) - \phi_{\gamma^{r-1}}(x^{r}) \leq \frac{\gamma^{r-1} \widetilde{L}_{x}^{2}}{2}\sum\limits_{i=1}^{K} \| {\hat{y}^{r}_i} - y^{r} \|^{2} \nonumber \\
&- \left(\beta^r +\mu - \frac{1}{2\gamma^{r-1}} - \frac{\widetilde{L}_{x}^{r-1}}{2}\right) \| x^{r+1} - x^{r} \|^{2}  \nonumber \\
& \hspace{-3mm}\leq  -\left(\beta^r - \frac{1+d_1}{2\gamma^{r-1}}- d_2\right) \| x^{r+1} - x^{r} \|^{2} + \frac{\gamma^{r-1} \widetilde{L}_{x}^{2}}{2}\sum\limits_{i=1}^{K} \| {\hat{y}^{r}_i} - y^{r} \|^{2},
\end{align}}%
where we have defined
\begin{align}
\widetilde{L}_{x} & := \max_{i}L_{x_i}, \quad  d_1 := L_{y}\widetilde{L}_{x}, \quad d_2 := \widetilde{L}_{x} - \mu\\
\widetilde{L}_{x}^{r-1}& :=2\widetilde{L}_x +\frac{L_y \widetilde{L}_x}{\gamma^{r-1}}. 
\end{align}
According to the above definition, we further 
$$\widetilde{L}_{x}^{r-1}\ge 2L_{x_i} +\frac{L_y L_{x_i}}{\gamma^{r-1}}=\widetilde{L}_{x_i}^{r-1}, \forall i.$$

In order to proceed we need to bound the term $\| {\hat{y}^{r}_i} - y^{r} \|$, where {$\hat{y}^{r}_i$} is defined in \eqref{def_yhat_i}. 
{First, following a reasoning similar to \cite[Lemma B.1]{nouiehed2019solving} and noticing that 
	$$\widetilde{y}^{r} - {\hat{y}^{r}_{i}} =  \arg \max\limits_{y \in \mathcal{Y}} r_{\gamma^{r-1}}(x^{r},y) - \arg \max\limits_{y \in \mathcal{Y}}\; r_{\gamma^{r-1}}(x_i^r,w_{-i}^{r+1},y),$$ 
	it is easy to obtain the following bound:}
\begin{align} \label{eq:eq6}
\|\widetilde{y}^{r} - {\hat{y}^{r}_{i}} \| \leq \left(1+\frac{ L_{y}}{\gamma^{r-1}}\right) \|x^{r+1}- x^r\|.
\end{align}
Then, using the triangle inequality, {\eqref{eq:eq6}  and the property $(a+b)^2\leq 2a^2 + 2b^2$} we obtain the following:
\begin{align}\label{eq:bound_y}
&\sum\limits_{i=1}^{K} \|\hat{y}^{r}_i -y^{r} \|^2 \nonumber\\
&=
\sum\limits_{i=1}^{K} \|\hat{y}^{r}_i -\widetilde{y}^{r} +\widetilde{y}^{r}-y^{r}\|^2  \nonumber\\
&\leq \sum\limits_{i=1}^{K} \left\lbrace 2\|\hat{y}^{r}_i -\widetilde{y}^{r} \|^2 + 2\|\widetilde{y}^{r}-y^{r}\|^2 \right\rbrace \nonumber \\
& \leq \sum\limits_{i=1}^{K} \left\lbrace 2\left(1+\frac{ L_{y}}{\gamma^{r-1}}\right)^2 \|x^{r+1}- x^r\|^2 + 2(\delta^{r})^2 \right\rbrace \nonumber \\
& \leq 2K\frac{ d_3}{(\gamma^{r-1})^2} \|x^{r+1}- x^r\|^2 + 2K(\delta^{r})^2
\end{align}
for some suitable constant $d_3 := d_3(\gamma_{0})>0$. {Note that the second to the last inequality is due to the definition of $\tilde{y}^r$ in \eqref{eq:y}.}

Recall that from Lemma \ref{lemma:phi_prop}, we have the following property:
$$\phi_{\gamma^{r}}(x) \leq \phi_{\gamma^{r-1}}(x)  + \frac{1}{2}(\gamma^{r-1}-\gamma^{r}) d^2.$$ 
Combine the above with 
{\eqref{eq:eq4} and \eqref{eq:bound_y}},  we obtain over overall estimate of the descent: %
\begin{align}\label{eq:phi_diff}
&\phi_{\gamma^{r}}(x^{r+1}) - \phi_{\gamma^{r-1}}(x^{r})\\
& {= \phi_{\gamma^{r}}(x^{r+1}) - \phi_{\gamma^{r-1}}(x^{r+1}) + \phi_{\gamma^{r-1}}(x^{r+1}) - \phi_{\gamma^{r-1}}(x^{r})} \nonumber \nonumber\\
& {\leq \frac{\gamma^{r-1}-\gamma^{r}}{2}d^2 -\left(\beta^r - \frac{1+d_1}{2\gamma^{r-1}}- d_2\right) \| x^{r+1} - x^{r} \|^{2}} \nonumber\\
&{\quad + \frac{\gamma^{r-1} \widetilde{L}_{x}^{2}}{2}\sum\limits_{i=1}^{K} \| {\hat{y}^{r}_i} - y^{r} \|^{2}} \nonumber\\
&\leq  K\gamma^{r-1} \widetilde{L}_{x}^{2} \left( \frac{{d_{3}}}{(\gamma^{r-1})^2} \|x^{r+1}- x^r\|^2 + (\delta^{r})^2   \right) \nonumber\\
&-\left(\beta^r - \frac{1+d_1}{2\gamma^{r-1}}- d_2\right)  \| x^{r+1} - x^{r} \|^{2} + \frac{\gamma^{r-1}-\gamma^{r}}{2}d^2 \nonumber\\
&\leq -\left( \beta^r - \frac{d_4}{2\gamma^{r-1}}- d_2 \right)\|x^{r+1} - x^{r} \|^2  + d_5 \gamma^{r-1}(\delta^{r})^2 \nonumber\\
& \quad +\frac{\gamma^{r-1}-\gamma^{r}}{2}d^2,
\end{align}
where  $d_4 := 1+d_1+2K\widetilde{L}_{x}^{2}{d_{3}}>0$ and $d_5 :=	 K\widetilde{L}_{x}^{2}$.

{In order to ensure that the coefficient in front of the term $\|x^{r+1} - x^{r} \|^2$ is positive, let us pick
$$\beta^{r} = \frac{d_4}{\gamma^{r-1}}+ d_2.$$}
{Then, \eqref{eq:phi_diff} can be written as}
\begin{align}\label{eq:eq5}
&\frac{d_4}{2\gamma^{r-1}} \|x^{r+1} - x^{r} \|^2 \\
&\leq \phi_{\gamma^{r-1}}(x^{r})-\phi_{\gamma^{r}}(x^{r+1})+ d_5\gamma^{r-1}(\delta^{r})^2 +\frac{\gamma^{r-1}-\gamma^{r}}{2}d^2.\nonumber
\end{align}
Following the same steps as in \cite[Theorem 1]{lu2019hybrid_ieee} we obtain a bound on the $i$th {block} of the optimality condition {\eqref{eq.optgap}}, that is
$$
\| ({\nabla} {\mathcal{G}^{\beta^{r}}}(x^r,y^r))_{i}\|^{2}
\le \left(\beta^{r} + L \right)^{2}\|x^{r+1}-x^r\|^{2},
$$
where $L = L_{u_i} + L_{x_i}$.
Then, for the choice of $\beta^{r}$ adopted above and {by summing over the first $[K]$ blocks} we get
\begin{align} \label{eq:eq8}
\| ({\nabla} {\mathcal{G}^{\beta^{r}}}(x^r,y^r))_x\|^{2}
&\leq K \left(\frac{d_4}{\gamma^{r-1}}+ d_2 + L \right)^2\|x^{r+1}-x^r\|^2 \nonumber \\
&\leq \frac{d_{5}}{(\gamma^{r-1})^{2}}\|x^{r+1}-x^r\|^2
\end{align}%
for some constant $d_5$, {independent of the iteration number $r$}.
Moreover, we multiply both sides of \eqref{eq:eq8} with $\frac{d_4 \gamma^{r-1}}{2d_5}$  and combine it with \eqref{eq:eq5}, which yields
\begin{align*}
&\frac{d_4 \gamma^{r-1}}{2d_5} \| ({\nabla} \mathcal{G}^{\beta^{r}}(x^r,y^r))_x\|^{2} \leq \phi_{\gamma^{r-1}}(x^{r})-\phi_{\gamma^{r}}(x^{r+1})  \nonumber\\
&+\frac{\gamma^{r-1}-\gamma^{r}}{2}d^2 + d_5\gamma^{r-1}(\delta^{r})^2.
\end{align*}%
As a result, for a given $\epsilon$ if we sum over {$r = 1, \ldots, T$}, where $ T = T(\epsilon) \:= \min\{r \geq 1 \mid \|{{\nabla} {\mathcal{G}^{\beta^{r}}}}({ x^{r+1},y^{r+1}})\|\le\epsilon\}$, we get
\begin{align*}
&\sum\limits_{r=1}^{T} \frac{d_4 \gamma^{r-1}}{2d_5} \| ({\nabla} {\mathcal{G}^{\beta^{r}}}(x^r,y^r))_x\|^{2} \leq \phi_{\gamma^{0}}(x^{1}) - \phi_{\gamma^{T}}(x^{T+1})  \\
&+\frac{\gamma^{0}-\gamma^{T}}{2}d^2 + d_5\sum\limits_{r=1}^{T} \gamma^{r-1}(\delta^{r})^2
\end{align*}
which implies that
\begin{align}\label{eq:bound_prop1}
 \epsilon^{2} \leq \frac{d_6 +d_7\sum\limits_{r=1}^{T} \gamma^{r-1}(\delta^{r})^2}{\sum\limits_{r=1}^{T} \gamma^{r-1}},
\end{align}%
where we have defined 
{\small
$$d_6: =\frac{2d_5}{d_4}\left(\phi_{\gamma^{0}}(x^{1}) - \min\limits_{x \in \mathcal{X}}\{\phi_{\gamma^{0}}(x)\}+\frac{\gamma^{0}-\gamma^{T}}{2}d^2 \right), \; d_7: = \frac{2d_5^2}{d_4}.$$}
Notice that for $\gamma^{r}= \frac{1}{r^{1/3}}$  we have
\begin{align*}
\sum\limits_{r=1}^{T}  \gamma^{r-1} \geq \sum\limits_{r=1}^{T}  \gamma^{r} = \sum\limits_{r=1}^{T} \frac{1}{r^{1/3}} \geq T^{2/3}
\end{align*}
and for $\delta^{r} = \frac{1}{r^{1/3}}$ we see that
\begin{align*}
\sum\limits_{r=1}^{T} \gamma^{r-1}(\delta^{r})^2 \leq \sum\limits_{r=1}^{T} \frac{C^{\prime}}{r^{1/3}}\frac{1}{r^{2/3}} \leq \sum\limits_{r=1}^{T} \frac{C^{\prime}}{r} \leq C \ln(T),
\end{align*}%
where $C^{\prime}, C$ are some constants.  {Finally, for the above choices for $\gamma^{r}$ and  $\delta^{r}$ \eqref{eq:bound_prop1} becomes
\begin{align*}
\epsilon \leq \frac{(C \ln(T(\epsilon)))^{1/2}}{\left(T(\epsilon)\right)^{1/3}}.
\end{align*}
Then it follows that $T(\epsilon) = \widetilde{\mathcal{O}}\left(\frac{1}{\epsilon^3} \right)$.} \hfill $\blacksquare$
}
\end{proof}

\begin{proposition}[Analysis of the maximization problem] \label{prop.main2}
{\it Suppose that {Assumptions A -- C} hold. Then after the following iterations of the proximal gradient ascent method (with constant stepsize) we can obtain a solution $y^{r}$ of the maximization problem such {that \eqref{eq:y} holds true.
$${ J^{r}} \geq \log\left( \frac{d^{2}}{(\delta^{r})^{2}} \right)/\log\left(\frac{L_y+\gamma^{r-1}}{L_y}\right).$$
} 

}
\end{proposition}
\vspace{4mm}
\begin{proof}
First, let {$p(z)$} be a function  that {has} $\hat{L}$-Lipschitz gradient and is $\hat{\mu}$-strongly convex. Also, let {$q(z)$} be some non-smooth convex function. According to a standard result in optimization literature \cite{Schmidt14convergencerate} the application of the proximal gradient descent method on {$p(z) + q(z)$} with stepsize $\alpha=1/{\hat{L}}$ for $t$ steps yields $\|z^{t}-z^{\ast}\| \leq \left( 1-\frac{\hat{\mu}}{\hat{L}}\right)^{t} \| z^{0} - z^{\ast}\|$, where $z^{\ast}$ is the global minimum of the problem.

Let us apply the proximal gradient ascent method to the following problem 
$$\max\limits_{y \in \mathcal{Y}}\; r_{\gamma^{r-1}}(x^{r},y).$$ 
{Note that from Lemma \ref{lemma:constants} the objective has strong concave constant $\gamma^r$, and has Lipschitz constant $L_y+ \gamma^r$.}
So to ensure that $\| \widetilde{y}^{r} - y^{r}\| \leq \delta^{r}$ we must impose the condition
$$
\left( \frac{L_y}{\gamma^{r-1}+L_y} \right)^{{J^{r}}} d \leq \delta^{r},
$$
where $d$ is a bound on the norm of the elements of $\mathcal{Y}$ and ${J^{r}}$ is the number of  steps.  {Solving for $J^{r}$ results to} 
$${J^{r}} \geq \log\left( \frac{d}{\delta^{r}} \right)/\log\left(\frac{L_y+\gamma^{r-1}}{L_y}\right).$$

\hfill $\blacksquare$
\end{proof}

Finally, we give the total complexity of HiBSA.
\vspace{1mm}
{
\begin{theorem}[Gradient complexity]\label{th.main2}
{\it Suppose that {Assumptions A --  C} hold and let $(x^r,y^r)$ be a sequence generated by HiBSA. Then, in order to obtain an $\epsilon$-stationary point \eqref{eq:opt}, we need $T(\epsilon)$ iterations in the outer loop and $J^{r} = \log\left( \frac{d^{2}}{(\delta^{r})^{2}} \right)/\log\left(\frac{L_y+\gamma^{r-1}}{L_y}\right)$  iterations in the inner loop, where $\delta^{r} = \frac{1}{r^{1/3}}$, $\gamma^{r}= \frac{1}{r^{1/3}}$, $r=1,\ldots, T(\epsilon)$. Moreover, the total number of gradient computations required for reaching an $\epsilon$-stationary point \eqref{eq:opt} is $\widetilde{\mathcal{O}}(\frac{1}{\epsilon^{4}} \log\left( \frac{1}{\epsilon} \right))$.
}
\end{theorem}
}

\begin{proof}
{At each iteration $r$ (of the outer loop) we compute one gradient for the minimization problem and perform $J^r$ steps of gradient ascent for the maximization problem. From Proposition \ref{prop.main3}, the total number of gradient computations that is required in the outer iteration is $\widetilde{\mathcal{O}}(\frac{1}{\epsilon^{3}})$. Moreover, the total number of gradient computations for the inner maximization problem  is bounded by} 
\begin{align*}
&\sum\limits_{r=1}^{{T(\epsilon})} \frac{\log\left( \frac{d}{\delta^{r}} \right)}{\log\left(\frac{L_y+\gamma^{r-1}}{L_y}\right)} {\mathop{=}\limits^{(a)}}
\sum\limits_{r=1}^{\widetilde{\mathcal{O}}(1/{\epsilon^{3}})} \frac{\log\left( d r^{1/3} \right)}{\log\left( 1 + \frac{1}{L_y (r-1)^{1/3}}\right)}  \\
& \mathop{\leq}\limits^{(b)}  \sum\limits_{r=1}^{\widetilde{\mathcal{O}}(1/{\epsilon^{3}})} \log\left( d r^{1/3} \right) \left(L_y (r-1)^{1/3} +1 \right) \\
&{\mathop{\leq}\limits^{(c)}}  d_9 \sum\limits_{r=1}^{\widetilde{\mathcal{O}}(1/{\epsilon^{3}})} \log\left( r \right) r^{1/3} \\
&\sim \int\limits_{1}^{1/\epsilon^{3}} \log(r)r^{1/3} dr \sim \widetilde{\mathcal{O}}\left(\frac{1}{\epsilon^{4}}\log\left(\frac{1}{\epsilon} \right) \right)
\end{align*}
where {in (a) we substituted $\delta^{r} = \frac{1}{r^{1/3}}$, $\gamma^{r}= \frac{1}{r^{1/3}}, T(\epsilon) = \widetilde{\mathcal{O}}(1/{\epsilon^{3}})$; in (b)} we used the property $\log(x) \geq 1-\frac{1}{x}$ and  {in (c)} $d_9$ is a large enough constant.
{Consequently, the total number of gradient computations required for attaining an $\epsilon$-stationary point \eqref{eq:opt} is $\widetilde{\mathcal{O}}(\frac{1}{\epsilon^{4}} \log\left( \frac{1}{\epsilon} \right))$}
\hfill $\blacksquare$
\end{proof}

\bibliographystyle{IEEEbib}
\bibliography{refs}